\def\Z{{\mathbb Z}}
\def\SL{{\rm SL}}
\def\GL{{\rm GL}}
\def\ct{{\rm ct}}
\def\Stab{{\rm Stab}}
\def\Tr{{\rm Tr}}
\def\Cl{{\rm Cl}}
\def\P{{\mathbb P}}
\def\Disc{{\rm Disc}}
\def\Aut{{\rm Aut}}\def\irr{{\rm irr}}
\def\Vol{{\rm Vol}}
\def\R{{\mathbb R}}
\def\F{{\mathbb F}}
\def\FF{{\mathcal F}}
\def\RR{{\mathcal R}}
\def\Q{{\mathbb Q}}
\def\U{{\mathcal U}}
\def\W{{\mathcal W}}
\def\V{{\mathcal V}}
\def\ZZ{{\mathcal Z}}
\def\Z{{\mathbb Z}}
\def\P{{\mathbb P}}
\def\F{{\mathbb F}}
\def\Q{{\mathbb Q}}
\def\C{{\mathbb C}}
\def\m1{{\textrm{\textbf{Id}}}}
\def\L{{\mathcal L}}
\newtheorem{theorem}{Theorem}
\newtheorem{lemma}[theorem]{Lemma}
\newtheorem{proposition}[theorem]{Proposition}
\newenvironment{proof}{\noindent {\bf Proof:}}{$\Box$ \vspace{2 ex}}
\title{On the Davenport-Heilbronn theorems and second order terms}
\author{Manjul Bhargava, Arul Shankar, and Jacob Tsimerman}
\begin{document}

\maketitle

\section{Introduction}

The classical theorems of Davenport and Heilbronn~\cite{DH} provide
asymptotic formulae for the number of cubic fields having bounded
discriminant and for the total number of 3-torsion elements in the class
groups of quadratic fields having bounded discriminant.  Specifically,
the theorems state:
\begin{theorem}[Davenport--Heilbronn]\label{DHth1}
  Let $N_{3}(\xi,\eta)$ denote the number of cubic fields $K$, up to
  isomorphism, that satisfy $\xi<\Disc(K)<\eta$. Then
\begin{equation}
\begin{array}{rcrcrcr}
{N_{3}(0,X)}&=&\displaystyle{\frac{1}{12\zeta(3)} X} &+& o(X)\:\!;\\[.125in]
N_{3}(-X,0)&=&\displaystyle{\frac{1}{4\zeta(3)}\, X} &+& o(X)\:\!.
\end{array}\end{equation}
\end{theorem}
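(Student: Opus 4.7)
The plan is to parametrize cubic rings by integral binary cubic forms via the Delone--Faddeev correspondence, count $\GL_2(\Z)$-orbits of such forms with bounded discriminant using the geometry of numbers, and then sieve down to orbits corresponding to maximal orders (i.e.\ rings of integers of cubic fields).

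First, I would recall that $\GL_2(\Z)$-orbits on integral binary cubic forms $f(x,y)=ax^3+bx^2y+cxy^2+dy^3$ are in bijection with isomorphism classes of (possibly degenerate) cubic rings over $\Z$, with $\Disc(f)=\Disc(\O_f)$. Under this bijection, cubic fields of discriminant in $(\xi,\eta)$ correspond to $\GL_2(\Z)$-orbits of \emph{irreducible} forms whose associated ring is \emph{maximal}, i.e.\ not contained in a strictly larger cubic ring.

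Next, I would count $\GL_2(\Z)$-orbits of integral binary cubic forms of bounded absolute discriminant using Minkowski-style geometry of numbers. The real space $V(\R)$ splits into two components according to the sign of the discriminant, each a single $\GL_2(\R)$-orbit. Fix a Siegel-type fundamental domain $\FF^{\pm}$ for $\GL_2(\Z)\backslash\GL_2(\R)$ acting on each component, and let $\RR_X^\pm$ denote the region of forms in $\FF^\pm\cdot v_0^\pm$ with $|\Disc|<X$. To leading order, the number of irreducible integer points in $\RR_X^\pm$ equals $\Vol(\RR_X^\pm)/m^\pm$, where $m^\pm$ is the order of the generic stabilizer; this gives a main term of order $X$ with an explicit constant from the volume computation. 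The standard trick for handling the fact that $\FF^\pm$ is not compact is to average over a compact set of $\GL_2(\R)$-translates of a fixed bounded "box," so that most of the cuspidal region gets thickened into something to which ordinary lattice-point counting applies.

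The main obstacle, as in the original argument, is controlling the cusp. The cuspidal part of $\FF^\pm$ has infinite volume in the dilated region, but one shows that virtually all integer points in the cusp correspond to \emph{reducible} forms (those with $a=0$, modulo $\GL_2(\Z)$), and the reducible forms should be discarded. Thus the heart of the proof is a direct combinatorial argument bounding the number of $\GL_2(\Z)$-orbits of reducible integral binary cubic forms of absolute discriminant at most $X$ by $o(X)$; this step is what forces the error term to be merely $o(X)$ rather than a power saving. Combined with the volume computation, this yields an asymptotic of the form $c^\pm\cdot X + o(X)$ for the number of all maximal cubic \emph{rings}, up to sieving.

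Finally, to pass from all cubic rings to maximal cubic rings (i.e.\ cubic fields), I would apply a sieve on the level of forms. Maximality at a prime $p$ is a condition modulo $p^2$ on the form, so one computes the density $\mu_p$ of forms in $V(\Z_p)$ giving rings that are maximal at $p$. A standard local calculation shows $\mu_p=(1-p^{-2})(1-p^{-2})+\text{corrections}$ that ultimately produce the Euler factor $(1-p^{-2})$, and the product of $\mu_p$ over all $p$ contributes the factor $1/\zeta(3)$ (combined with the archimedean volumes). Making this sieve uniform in $p$ requires a tail estimate: the number of forms of discriminant less than $X$ that are non-maximal at some prime $p>M$ must be controlled uniformly in $M$, which follows from a uniform bound on the $p$-adic non-maximal densities together with the ring count from the earlier steps. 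Assembling these ingredients yields the two constants $1/(12\zeta(3))$ and $1/(4\zeta(3))$ predicted by the theorem.
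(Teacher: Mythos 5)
Your proposal takes essentially the same route as the paper: the Delone--Faddeev correspondence, averaging over a continuum of $\GL_2(\R)$-translates of a fixed bounded region to thicken the cusp, bounding the number of irreducible points in the cusp by showing most cuspidal lattice points are reducible, and then a congruence sieve on maximality at each prime together with a uniform tail estimate to pass to cubic fields. One concrete error, though: the local density of binary cubic forms over $\Z_p$ whose associated ring is maximal at $p$ is
\[
\mu_p(\U_p)=(1-p^{-2})(1-p^{-3}),
\]
not $(1-p^{-2})(1-p^{-2})$, and your statement that the corrections ``ultimately produce the Euler factor $(1-p^{-2})$'' is inconsistent with your own conclusion that the product over $p$ yields $1/\zeta(3)$. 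Both Euler factors survive in the product: the $(1-p^{-3})$ factors give $\zeta(3)^{-1}$, while the $(1-p^{-2})$ factors give $\zeta(2)^{-1}=6/\pi^2$, and it is precisely this $\zeta(2)^{-1}$ that cancels the $\pi^2$ appearing in the archimedean volume constant $\pi^2/(12n_i)$, yielding $1/(2n_i\zeta(3))$, i.e.\ $1/(12\zeta(3))$ for the totally real component ($n_0=6$) and $1/(4\zeta(3))$ for the complex one ($n_1=2$). The density as you wrote it would instead produce $\zeta(2)^{-2}$, which does not match the theorem.
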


\begin{theorem}[Davenport--Heilbronn]\label{DHth2}
Let $D$ denote the discriminant of a quadratic field and let
  $\Cl_3(D)$ denote the $3$-torsion subgroup of the ideal class group
  $\Cl(D)$ of $D$. Then
\begin{equation}
\begin{array}{rclcrcr}
\displaystyle{\displaystyle\sum_{0 < D < X} \#\Cl_3(D)} &=&
\!\displaystyle{\frac43}\cdot{\,\!\displaystyle\sum_{0 < D < X}\, 1}&+&o(X)\:\!;\\[.25in]
\displaystyle{\displaystyle\sum_{-X < D < 0} \#\Cl_3(D)} &=&
\displaystyle{2}\cdot{\!\displaystyle\sum_{-X < D < 0}\! 1}&+&o(X)\:\!.
\end{array}
\end{equation}
\end{theorem}
The Davenport--Heilbronn theorems, and the methods underlying their
proofs, have seen applications in numerous~works (see, e.g.,
\cite{Belabas1}, \cite{BF}, \cite{dodqf}, \cite{BW}, \cite{DK},
\cite{FK}, \cite{Vatsal}, \cite{Wong}).

Subsequent to their 1971 paper, extensive computations were undertaken
by a number of authors (see, e.g., Llorente--Quer~\cite{LQ} and
Fung--Williams~\cite{FW}) in an attempt to numerically verify the
Davenport--Heilbronn theorems.  However, computations up to
discriminants even as large as $10^7$ were found to agree quite poorly
with these theorems.  This in turn led to questions about the magnitude
of the error term in these theorems, and the problem of determining
precise second order terms.

In a related work, Belabas~\cite{Belabas1} developed a very fast method for
enumerating cubic fields---indeed, in essentially linear time
with the discriminant---allowing him to make tables of cubic fields up
to absolute discriminant $10^{11}$.  These computations still seemed
to agree rather poorly with the first Davenport--Heilbronn theorem, and led
Belabas to guess only the existence of error terms smaller than
$O(X/(\log\, X)^a)$ for any~$a$.
However, Belabas~\cite{Belabas2} later obtained the first
subexponential error terms for these theorems of the form
$O(X\exp(-\sqrt{\log
  X\log\log X}))$.

In 2000,
Roberts~\cite{Roberts} conducted a remarkable study of these latter
computations in conjunction with certain theoretical considerations,
which led him to conjecture a precise {\it second main term} for
Theorem~\ref{DHth1}.  This conjectural second main term took the form
of a certain explicit constant times $X^{5/6}$.
Further computations carried out in the last few years have revealed
Roberts' conjecture to agree extremely well with the data.  Meanwhile,
on the theoretical side, power-saving error terms for
Theorems~\ref{DHth1} and \ref{DHth2} were obtained by
Belabas, the first author, and Pomerance~\cite{BBP}, who showed
error terms of $O(X^{7/8+\epsilon})$.

The purpose of the current article is to prove the above conjecture of
Roberts.
More precisely, we prove the following theorem.

\begin{theorem}\label{main1}
Let $N_{3}(\xi,\eta)$ denote the number of cubic fields $K$, up to
isomorphism, that satisfy $\xi<\Disc(K)<\eta$.  Then
\begin{equation}
\begin{array}{rcrcrcr}
{N_{3}(0,X)}&\!=\!&\displaystyle{\frac{1}{12\zeta(3)} X} &\!+\!&
\displaystyle{\frac{4\zeta(1/3)}{5\Gamma(2/3)^3\zeta(5/3)} X^{5/6}}
&\!+\!& O_\epsilon(X^{5/6-1/48+\epsilon})\:\!;\\[.125in]
N_{3}(-X,0)&\!=\!&\displaystyle{\frac{1}{4\zeta(3)}\, X} &\!+\!&
\displaystyle{\frac{\sqrt{3}\cdot 4\zeta(1/3)}{5\Gamma(2/3)^3\zeta(5/3)} X^{5/6}}
&\!+\!& O_\epsilon(X^{5/6-1/48+\epsilon})\:\!.
\end{array}\end{equation}
\end{theorem}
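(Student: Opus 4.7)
The plan is to refine the classical Delone--Faddeev--Davenport--Heilbronn approach: parametrize cubic orders by $\GL_2(\Z)$-orbits on integral binary cubic forms $f(x,y)=ax^3+bx^2y+cxy^2+dy^3$, count orbits with $|\Disc(f)|<X$ arising from irreducible forms, and then sieve to maximal orders. The leading term $\frac{1}{12\zeta(3)}X$ is classical; the novelty will be extracting a precise secondary term of order $X^{5/6}$ while retaining a power-saving error.

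Following Bhargava's averaging technique, I would count lattice points in the translates $g\FF$ of a fundamental domain $\FF$ for $\GL_2(\Z)$ acting on the space $V_\R$ of real binary cubic forms, averaging $g$ over a compact region $G_0\subset\GL_2(\R)$ and restricting to $|\Disc|<X$. The ``main body'' of $g\FF$, where all four coordinates of $f$ are $\gg X^{1/4}$, is an expanding region whose lattice-point count is controlled by its volume plus a boundary error, giving the main term after a standard Jacobian computation. The essential new work concerns the cusp region, namely forms with leading coefficient $|a|$ close to $0$. Previously this region was merely shown to contribute $o(X)$ to the irreducible count; here one must extract from it a precise contribution of size $cX^{5/6}$. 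I would dyadically decompose the cusp by the size of $|a|$, apply Poisson summation in the fibers over $a$, and isolate a secondary saddle giving the constant $\tfrac{4\zeta(1/3)}{5\Gamma(2/3)^3\zeta(5/3)}$, with the factor $\sqrt{3}$ for negative discriminants emerging from the differing shapes of $\FF$ in the two sign components. The appearance of $\zeta(5/3)$ is the signature of a secondary pole at $s=5/6$ of the Shintani zeta function, which in the sieve-theoretic framework manifests as a local density weighted by $p^{-5/3}$ rather than $p^{-2}$.

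Having counted cubic orders, I would sieve to maximal orders via the Euler product of local densities, \emph{separating} the contribution of strictly reducible forms (which correspond to non-domains and must be excluded) from the irreducible cusp contribution that yields the secondary term. The sieve must be executed with a power-saving tail bound, since otherwise the sieve tail would swamp the $X^{5/6}$ term. This requires an Ekedahl-type bound, uniform in $p$, on the density of forms failing maximality at $p$; combined with an appropriate truncation, this keeps the total error at $O_\epsilon(X^{5/6-1/48+\epsilon})$. Passage from orders to isomorphism classes of fields gives the standard factor $1/\#\Aut$.

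The principal obstacle will be the cusp analysis: producing a precise secondary constant, not merely an upper bound, demands uniform Poisson summation together with careful bookkeeping of which forms in the cusp are irreducible. The exponent $1/48$ reflects a three-way balance between the Poisson cutoff, the dyadic cusp decomposition, and the sieve truncation, and tuning this balance optimally is the central technical task. A secondary difficulty will be handling the positive and negative discriminant cases simultaneously, since the fundamental domain in the cusp behaves differently in each sign component, which is what produces the extra $\sqrt 3$ in the second formula.
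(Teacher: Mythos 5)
Your high-level framework is correct and matches the paper: Delone--Faddeev parametrization, Bhargava averaging over fundamental domains, extracting the $X^{5/6}$ contribution from the cusp by slicing over the coefficient $a$, and then sieving to maximal orders. However, you are missing the two technical devices that actually make the power-saving error $O_\epsilon(X^{5/6-1/48+\epsilon})$ attainable, and without them your sieve as sketched will stall.

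First, the sets $\W_p$ of forms failing maximality at $p$ are defined by congruence conditions modulo $p^2$, and thus decompose into a very large number of translates of a sublattice of index $\approx p^2$ or more. Naively summing the congruence-version error bound over all those translates produces an error far too large to recover the $X^{5/6}$ secondary term. You acknowledge needing a uniform tail bound on non-maximal densities, but that only handles $n$ large. The paper's Section 9.1 introduces a bijection between ``Type 1 triples'' (pairs of a $2$-dimensional cubic space with a marked root mod $p$ forcing non-maximality) and ``Type 2 triples'' (the index-$p$ enlarged lattice with the complementary marked line), yielding the identity
\begin{equation*}
N(\mathcal W_p;X) = \sum_{\alpha\in\P^1(\F_p)} N(V_{p,\alpha};X/p^2) - \sum_{\alpha\in\P^1(\F_p)} N(V_{p,\alpha};X/p^4) + N(V_\Z;X/p^4),
\end{equation*}
which converts conditions modulo $p^2$ into counts over lattices $V_{p,\alpha}$ defined modulo $p$. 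This reduction is essential: it is what makes Theorem~\ref{shincong} applicable with controllable cumulative error. Your proposal has no analogue of this step, and I do not see how to run your sieve without it.

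Second, even after the switching trick, there is a middle range of moduli $n \asymp X^{1/6}$ where neither the congruence-count estimate (too many lattices, errors accumulate) nor the uniform tail bound (not yet small enough) suffices. The paper handles this in Section 9.4 with a genuinely different argument: the weight $w_n(f)$ counting roots of $f$ in $\P^1(\Z/n\Z)$ has a Fourier transform satisfying $\widehat{w_n}(\chi) \ll \prod_{p\mid n,\, \chi_p\neq \mathbbm{1}} p^{-1}$, and this is used to prove that $w_n$ equidistributes in boxes of side $\ll n$, which then feeds into a weighted version of the fundamental-domain count. The exponent $1/48$ comes from optimizing the boundary points $X^{1/6-\delta_1}$ and $X^{1/6+\delta_2}$ of this three-range split of the sieve parameter $n$---not, as you suggest, from balancing a Poisson cutoff, a dyadic cusp decomposition, and a sieve truncation. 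Without the middle-range equidistribution argument the range $n\asymp X^{1/6}$ is uncontrolled and the total error will not fall below $X^{5/6}$.

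Finally, a smaller discrepancy: in the cusp analysis the paper does not apply Poisson summation in the fibers over $a$. Instead it uses Davenport's geometry-of-numbers bound within each slice, and then extracts the $\zeta(1/3)$ constant by taking a Mellin transform of a smooth cutoff $\Psi_0$ applied to the sum over $a$, shifting the contour to pick up the pole of $\zeta(s+1/3)$ at $s=2/3$. A Poisson-in-the-fibers approach may or may not be salvageable, but it is not the paper's, and you would need to supply the analysis.
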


Davenport and Heilbronn also proved a refined version of
Theorem~\ref{DHth1}, where they give the asymptotics for the number of
cubic fields $K$ having bounded discriminant satisfying any specified
set of splitting conditions at finitely many primes.  Roberts also
conjectures a precise second main term for the number of such fields
$K$ having discriminant bounded by $X$ (see \cite[Section~5]{Roberts}).  We also prove Roberts' refined conjecture in Section~9.

By essentially identical methods, we also prove the analogue of
Roberts' conjecture for the second Davenport--Heilbronn theorem, i.e.,
a precise second order term in Theorem~\ref{DHth2}.  Specifically,
we prove:

\begin{theorem}\label{main2}
Let $D$ denote the discriminant of a quadratic field and let
  $\Cl_3(D)$ denote the $3$-torsion subgroup of the ideal class group
  $\Cl(D)$ of $D$. Then
\begin{equation}
\begin{array}{rclcrcrcr}
\displaystyle{\displaystyle\sum_{0 < D < X} \#\Cl_3(D)} &=\;&
\!\displaystyle{\frac43}\cdot{\,\!\displaystyle\sum_{0 < D < X}\,
  1}&+&\displaystyle\frac{8\zeta(1/3)}{5\Gamma(2/3)^3}\prod_p\left(1-\frac{p^{1/3}+1}{p(p+1)}\right)\;X^{5/6} &+& O_\epsilon(X^{5/6-1/48+\epsilon})\:\!;\\[.275in]
\displaystyle{\displaystyle\sum_{-X < D < 0} \#\Cl_3(D)} &=\;&
\displaystyle{2}\cdot{\!\displaystyle\sum_{-X < D < 0}\! 1}&+&\displaystyle\frac{\sqrt3\cdot8\zeta(1/3)}{5\Gamma(2/3)^3}\prod_p\left(1-\frac{p^{1/3}+1}{p(p+1)}\right)\;X^{5/6}
&+&O_\epsilon(X^{5/6-1/48+\epsilon})\:\!.
\end{array}
\end{equation}
\end{theorem}

In the process, we present a simpler approach to proving the original
Davenport--Heilbronn theorems, and also a simpler approach to
establishing the theorem of Davenport~\cite{Davenport} on the density
of discriminants of binary cubic forms.  The second main term of the
latter theorem of Davenport (who obtained only a second term of
$O(X^{15/16})$) was first discovered by Shintani~\cite{Shintani} using
Sato and Shintani's theory of zeta functions for prehomogeneous vector
spaces~\cite{SatoShintani}.  In this article, we also give an
elementary derivation of this second main term of Shintani.  More
precisely, we prove:



\begin{theorem}[Davenport--Shintani]\label{bcfcount1}
Let $N(\xi,\eta)$ denote the number of $\GL_2(\Z)$-equivalence classes of
\linebreak irreducible integer-coefficient binary cubic forms $f$ satisfying
$\xi<\Disc(f)<\eta$.  Then
\begin{equation}
  \begin{array}{rcrcrcr}
    {N(0,X)}&\!=\!&\displaystyle{\frac{\pi^2}{72} X} &\!+\!&
    \displaystyle{\frac{\sqrt{3}\zeta(2/3)\Gamma(1/3)(2\pi)^{1/3}}{30\Gamma(2/3)}}X^{5/6}
    &\!+\!& O_\epsilon(X^{3/4+\epsilon})\:\!;\\[.125in]
    N(-X,0)&\!=\!&\displaystyle{\frac{\pi^2}{24} X} &\!+\!&
    \displaystyle{\frac{\zeta(2/3)\Gamma(1/3)(2\pi)^{1/3}}{10\Gamma(2/3)}} X^{5/6}
    &\!+\!& O_\epsilon(X^{3/4+\epsilon})\:\!.
\end{array}\end{equation}
\end{theorem}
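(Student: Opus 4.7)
The plan is to identify the space $V_\Z\cong\Z^4$ of integer-coefficient binary cubic forms $f(x,y)=ax^3+bx^2y+cxy^2+dy^3$, equipped with its standard $\GL_2(\Z)$-action and the $\GL_2(\Z)$-invariant quartic discriminant $\Disc(f)$. Then $N(0,X)$ (resp.\ $N(-X,0)$) equals the number of irreducible $\GL_2(\Z)$-orbits on $V_\Z\cap R_X^+$ (resp.\ $R_X^-$), where $R_X^{\pm}=\{v\in V_\R:0<\pm\Disc(v)<X\}$. I would fix a fundamental domain $\FF^{\pm}$ for $\GL_2(\Z)$ acting on the two discriminant components of $V_\R$---a Shintani-type domain with a single cusp in the direction $a\to 0$---so that the count reduces to a lattice-point count in $\FF^{\pm}\cap R_X^{\pm}$, minus the lattice points corresponding to reducible forms (which admit an explicit parametrization as products of a linear and a quadratic form).

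To count lattice points I would apply the averaging trick: rather than count points in $\FF^{\pm}\cap R_X^{\pm}$ directly, I average $\FF^{\pm}$ over a compact subset $K\subset\GL_2(\R)$ acting on $V_\R$, which smooths the boundary and makes the counting amenable to Davenport's lemma on lattice points in semialgebraic sets. In the \emph{main body} of $\FF^{\pm}$, where the leading coefficient $a$ of a representative is bounded away from $0$, the count is dominated by the volume, giving $\Vol(\FF^{\pm,\mathrm{main}}\cap R_X^{\pm})=c_1^{\pm}X+O_\epsilon(X^{3/4+\epsilon})$ with $c_1^{+}=\pi^2/72$ and $c_1^{-}=\pi^2/24$. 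This recovers Davenport's main term with a power-saving error.

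The second main term is extracted from a refined analysis of the cusp $\FF^{\pm,\mathrm{cusp}}$ where $a$ is small, say $|a|\le Y$ for a parameter $Y$ to be optimized. I would slice by the leading coefficient $a$ and, for each fixed $a$, count the integer triples $(b,c,d)$ for which $(a,b,c,d)\in\FF^{\pm}\cap R_X^{\pm}$ is irreducible. Along each slice the discriminant restricts to a cubic polynomial in $(b,c,d)$, and after subtracting off the reducible forms (those with a rational linear factor, including $a=0$) one can count integer points in the resulting bounded region via Euler--Maclaurin or Poisson summation on $(b,c,d)$. Summing the slice counts over $a\in[1,Y]$ produces a contribution that combines with the main-body volume to confirm the $X$-term, \emph{plus} a secondary contribution whose asymptotics give the $X^{5/6}$ term. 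The factor $\zeta(2/3)$ appears naturally as the Dirichlet sum $\sum_a a^{-2/3}$ arising from the scaling of the reducibility defect in successive slices, while the combination $\Gamma(1/3)(2\pi)^{1/3}/\Gamma(2/3)$ emerges from the one-variable Mellin/Tauberian analysis of the cusp integral.

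The principal obstacle is the interplay between the reducible subtraction and the second-term extraction. I expect two difficulties: (i) identifying precisely which lattice points in the cusp correspond to reducible forms and computing their count with enough accuracy so that the subtraction preserves---rather than washes out---the $X^{5/6}$ term, since both the irreducible count in the cusp and the reducible count individually contain contributions of size $X^{5/6}$ whose \emph{difference} must be computed exactly; and (ii) evaluating the resulting cusp Mellin integral with the explicit constants stated, which requires careful bookkeeping of the normalizing factors from the averaging set $K$, the $\SL_2(\R)$-invariant measure on $V_\R$, and the boundary correction arising when $(b,c,d)$-Poisson summation is applied on an irregular slice. Controlling these simultaneously while maintaining the error term $O_\epsilon(X^{3/4+\epsilon})$ (improving on Davenport's $O(X^{15/16})$) is the technical heart of the argument.
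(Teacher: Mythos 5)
Your plan matches the paper's approach in its essentials: averaging the count over a continuum of fundamental domains (the technique from the quintic paper), slicing the cuspidal part of the region by the leading coefficient $a$, and running a Mellin/zeta-function analysis of the resulting sum over $a$ to extract the $X^{5/6}$ term. Two points of divergence are worth noting, both in your favor to think through. First, your worry (i) about the reducible subtraction does not actually materialize: the paper shows (Lemma~\ref{reducible}) that reducible forms with $a\neq0$ contribute only $O(X^{3/4+\epsilon})$, and forms with $a=0$ are excluded automatically once one slices over $a\neq0$, so neither class interferes with the $X^{5/6}$ term. Second, and more seriously, your proposal to cut the cusp by a hard threshold $|a|\le Y$ would make the Mellin analysis considerably messier: the paper instead introduces a smooth partition of unity $\Psi+\Psi_0$ in the reduction parameter $t/\lambda^{1/3}$ (and also works in dyadic ranges of the discriminant) precisely so that $\widetilde\Psi_0(s)$ is rapidly decaying on vertical lines, letting the contour be shifted past the pole of $\zeta(s+\tfrac13)$ at $s=\tfrac23$ with negligible remainder. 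With a sharp cutoff the analogous kernel is $Y^s/s$, which does not decay, and you would have to fight the resulting oscillatory tail. Also, the gamma-factor combination $\Gamma(1/3)(2\pi)^{1/3}/\Gamma(2/3)$ in the constant does not come from the one-variable Mellin/Tauberian step as you guess; in the paper it arises from a $K$-average $\int_K f((1,0)\gamma)^{-2/3}\,d\gamma$ evaluated via a beta integral, with the functional equation $\zeta(1/3)=(2\pi)^{-2/3}\Gamma(2/3)\zeta(2/3)$ then converting the $\zeta(1/3)$ produced by the contour shift into the stated $\zeta(2/3)$.
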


In order to prove Theorems \ref{main1} and \ref{main2}, we need (in particular) to
apply a new, stronger version of Theorem~\ref{bcfcount1} where we
count equivalence classes of binary cubic forms satisfying any finite
or other suitable set of congruence conditions.  Such a theorem was
obtained by Davenport--Heilbronn but their method does not yield
second main terms.  Meanwhile, Shintani's zeta function method does
not immediately apply to cubic forms satisfying given congruence
conditions. We prove this congruence version of
Theorem~\ref{bcfcount1} in Section 6.

In fact, we use this more general version of Theorem~\ref{bcfcount1}
to prove a generalization of Theorems~\ref{main1} and~\ref{main2} that
also allows us to count cubic orders satisfying certain specified sets
of local conditions.  To state this more general theorem, we first
restate Theorem \ref{bcfcount1} as:
\begin{theorem}\label{ringwithres}
  Let $M_{3}(\xi,\eta)$ denote the number of isomorphism classes of
  orders $R$ in cubic fields that satisfy $\xi<\Disc(R)<\eta$.  Then
\begin{equation}
\begin{array}{rcrcrcr}
  {M_3(0,X)}&\!=\!&\displaystyle{\frac{\pi^2}{72} X} &\!+\!&
  \displaystyle{\frac{\sqrt{3}\zeta(2/3)\Gamma(1/3)(2\pi)^{1/3}}{30\Gamma(2/3)}}X^{5/6}
  &\!+\!& O_\epsilon(X^{3/4+\epsilon})\:\!;\\[.125in]
  M_3(-X,0)&\!=\!&\displaystyle{\frac{\pi^2}{24} X} &\!+\!&
  \displaystyle{\frac{\zeta(2/3)\Gamma(1/3)(2\pi)^{1/3}}{10\Gamma(2/3)}} X^{5/6}
  &\!+\!& O_\epsilon(X^{3/4+\epsilon})\:\!.
\end{array}\end{equation}
\end{theorem}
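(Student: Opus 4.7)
The plan is to observe that Theorem \ref{ringwithres} is merely a restatement of Theorem \ref{bcfcount1} under the Delone--Faddeev correspondence, and so there is essentially nothing to prove beyond verifying that this classical bijection sets up the right match. Concretely, the Delone--Faddeev correspondence provides a natural, discriminant-preserving bijection between $\GL_2(\Z)$-equivalence classes of integer-coefficient binary cubic forms $f(x,y)$ and isomorphism classes of cubic rings $R$ (i.e., commutative rings with unit whose underlying $\Z$-module is free of rank $3$). Explicitly, to such an $R$ with basis $\langle 1,\omega_1,\omega_2\rangle$ (after a translation putting $\omega_1\omega_2\in\Z$) one associates the binary cubic form recording the index form of the basis, and this assignment descends to the stated bijection of isomorphism classes with equivalence classes.

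The key point for our purposes is the domain-vs.-non-domain dichotomy on the ring side, which matches the irreducible-vs.-reducible dichotomy on the form side. A cubic ring $R$ is an integral domain (equivalently, an order in a cubic field) precisely when the associated binary cubic form $f$ has no rational linear factor, i.e., is irreducible over $\Q$. Indeed, a linear factor of $f$ over $\Q$ produces a nontrivial idempotent or zero-divisor in $R\otimes\Q$, and conversely. Thus the Delone--Faddeev bijection restricts to a bijection between $\GL_2(\Z)$-equivalence classes of irreducible integer binary cubic forms and isomorphism classes of orders in cubic fields, and under this restricted bijection the discriminant of the form equals the discriminant of the corresponding order.

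With the bijection in hand, the proof of Theorem \ref{ringwithres} is then immediate: for every real $\xi<\eta$,
\begin{equation}
M_3(\xi,\eta)\;=\;N(\xi,\eta),
\end{equation}
so both asymptotic formulae for $M_3(0,X)$ and $M_3(-X,0)$ follow word for word from the corresponding formulae for $N(0,X)$ and $N(-X,0)$ proved in Theorem \ref{bcfcount1}. There is no real obstacle: the only thing to check carefully is the matching of discriminants and of the irreducibility condition with the integral-domain condition, both of which are standard consequences of the Delone--Faddeev construction. The serious analytic content lies entirely in Theorem \ref{bcfcount1}, which will be established in the later sections; the statement of Theorem \ref{ringwithres} is introduced here only because the ring-theoretic formulation is the one that generalises cleanly to the count of cubic orders satisfying prescribed local conditions that is needed for the proof of Theorem \ref{main1}.
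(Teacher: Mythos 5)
Your proposal is correct and matches the paper's own route: the paper likewise treats Theorem~\ref{ringwithres} as an immediate consequence of Theorem~\ref{bcfcount1} via the Delone--Faddeev bijection (see the remark following Theorem~\ref{ringwithres}, the observation at the end of Section~2 that integral domains correspond to irreducible forms, and the closing line of Section~6.1 ``\ldots proves Theorems~\ref{bcfcount1} and \ref{ringwithres}''). Your check that cubic-ring integral domains are exactly orders in cubic fields and that discriminants match is precisely the verification the paper relies on.
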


The proof of Theorem \ref{ringwithres} is relatively straightforward,
given Theorem~\ref{bcfcount1} and the ``Delone--Faddeev bijection''
between isomorphism classes of cubic orders and
$\GL_2(\Z)$-equivalence classes of irreducible binary cubic forms
(which we describe in more detail in Section~2).


The generalization of Theorems~\ref{main1} and~\ref{main2} (which will
also then include Theorem~\ref{ringwithres}) that we will prove allows
one to count cubic orders of bounded discriminant satisfying any
desired finite (or, in many natural cases, infinite) sets of local
conditions.  To state the theorem, for each prime $p$ let $\Sigma_p$
be any set of isomorphism classes of orders in \'etale cubic algebras
over $\Q_p;$ also, let $\Sigma_\infty$ denote any set of isomorphism
classes of \'etale cubic algebras over $\R$ $($i.e.,
$\Sigma_\infty\subseteq \{\R^3,\R\oplus\C\})$. We say that the
collection $(\Sigma_p)\cup\Sigma_\infty$ is {\it acceptable} if, for
all sufficiently large primes $p$, the set $\Sigma_p$ contains all
maximal cubic orders over $\Z_p$
that are not totally ramified.  We say that the collection
$(\Sigma_p)\cup\Sigma_\infty$ is {\it strongly acceptable} if, for all
sufficiently large primes~$p$, the set~$\Sigma_p$ consists of the set
of all cubic orders over~$\Z_p$, the set of all maximal
cubic orders over~$\Z_p$, or the set of all maximal cubic orders
over $\Z_p$ that are not totally ramified.

We wish to asymptotically count the total number of cubic orders $R$
of absolute discriminant less than $X$ that agree with such local
specifications, i.e., $R\otimes\Z_p\in\Sigma_p$ for all $p$ and
$R\otimes\R\in\Sigma_\infty$.  This asymptotic count---with the first
{\it two} main terms---is contained in the following theorem:

\begin{theorem}\label{gensigma1}
  Let $(\Sigma_p)\cup\Sigma_\infty$ be a strongly acceptable
  collection of local specifications, and let $\Sigma$ denote the set
  of all isomorphism classes of orders $R$ in cubic fields for which
  $R\otimes\Z_p\in\Sigma_p$ for all $p$ and
  $R\otimes\R\in\Sigma_\infty$.  For a free $\Z_p$-module $M$, define
  $M^{\rm Prim}\subset M$ by $M^{\rm Prim}:= M\backslash \{p\cdot
  M\}$.  Let $N_3(\Sigma;X)$ denote the number of cubic orders
  $R\in\Sigma$ that satisfy $|\Disc(R)|<X$.  Then
\begin{equation}
\begin{array}{rcl}
N_3(\Sigma;X)\!\!\!&=&\!\!\!
\displaystyle{\Bigl(\frac12\sum_{R\in\Sigma_\infty}
\frac1{|\Aut(R)|}\Bigr)\cdot
\prod_p\Bigl(\frac{p-1}{p}\cdot\sum_{R\in\Sigma_p}
\frac{1}{\Disc_p(R)}\cdot\frac1{|\Aut(R)|}\Bigr)}
\cdot X \vspace{.1in}\\[.1in]  &+&\,\,
\displaystyle\frac{1}{\zeta(2)}\displaystyle{\Bigl(\sum_{R\in\Sigma_\infty}\!\!
c_2(R)\Bigr)\cdot
\prod_p\Bigl((1-p^{-1/3})\cdot\sum_{R\in\Sigma_p}\frac1{\Disc_p(R)}\cdot\frac1{|\Aut(R)|}\int_{(R/\Z_p)^{{\rm Prim}}}i(x)^{2/3}dx}\Bigr)
\cdot X^{5/6}\,\,\vspace{.1in}\\
&+&\,\,O_{\epsilon}(X^{5/6-1/48+\epsilon})\:\!,
\end{array}\end{equation}
where $\Disc_p(R)$ denotes the discriminant of $R$ over $\Z_p$ as a
power of $p$, \, $i(x)$ denotes the index of $\Z_p[x]$~in~$R$, \,
$dx$ assigns measure $1$ to $(R/\Z_p)^{{\rm Prim}}$, and
$$c_2(R)=\begin{cases}
  \displaystyle{\frac{\sqrt{3}\zeta(2/3)\Gamma(1/3)(2\pi)^{1/3}}{30\Gamma(2/3)}}& \textrm{ if $R\cong\R\oplus\R\oplus\R$ }\vspace{.1in}\\
  \displaystyle{\,\,\;\;\;\;\frac{\,\zeta(2/3)\Gamma(1/3)(2\pi)^{1/3}}{10\Gamma(2/3)}}& \textrm{ if $R\cong\R\oplus\C.$ }\\
\end{cases}$$
\end{theorem}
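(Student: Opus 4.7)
The plan is to translate the count into one of $\GL_2(\Z)$-equivalence classes of irreducible integer binary cubic forms via the Delone--Faddeev bijection, which preserves discriminants and identifies $|\Aut(R)|$ with the size of the $\GL_2(\Z)$-stabilizer. Under this correspondence, each local condition $R\otimes\Z_p\in\Sigma_p$ becomes a $\GL_2(\Z_p)$-invariant subset of integer binary cubic forms cut out by $p$-adic conditions, and $R\otimes\R\in\Sigma_\infty$ selects a union of $\GL_2(\R)$-orbits on real forms. The case in which $\Sigma_p$ imposes a nontrivial restriction at only finitely many primes then follows at once by applying the congruence-uniform refinement of Theorem~\ref{bcfcount1} promised in Section~6, with the signatures at infinity handled by the splitting already present in that theorem.

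To pass from finitely many to infinitely many local conditions I would sieve out the primes $p>Y$ for a truncation $Y$. By strong acceptability, the only condition to enforce at such primes is $p$-maximality (or nothing at all), which may be handled by inclusion-exclusion using the indicator of non-$p$-maximality. This reduces matters to a uniform tail estimate: a bound of the shape $O(X/p^2)+O(X^{5/6}/p^{5/3})$ on the number of irreducible binary cubic forms of discriminant less than $X$ that are non-maximal at $p$, uniformly in $p$. Summing over $p>Y$ yields a tail of acceptable size, while the sieve over $p\leq Y$ introduces only a $Y^{o(1)}$ loss multiplied by the error in Theorem~\ref{bcfcount1}; balancing the two contributions with a suitable power $Y=X^\theta$ yields the stated error $O_\epsilon(X^{5/6-1/48+\epsilon})$.

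The constants are identified by a local computation. At each prime $p$, the $p$-adic density of $\Sigma_p$ inside the space of binary cubic forms is unfolded along the Delone--Faddeev correspondence, which parametrizes integral forms $f$ by pairs $(R,\bar x)$ with $R$ a cubic order over $\Z_p$ and $\bar x\in R/\Z_p$ primitive, subject to the relation $\Disc(f)=i(\bar x)^2\,\Disc(R)$. The main term depends only on the Haar measure of this preimage, yielding the Euler factor $\frac{p-1}{p}\cdot\sum_{R\in\Sigma_p}\frac{1}{\Disc_p(R)|\Aut(R)|}$, where $\frac{p-1}{p}$ is the $p$-adic density of the primitive locus. The second main term scales like the $\frac{5}{6}$-power of the discriminant, so the Jacobian relation above contributes an additional factor of $i(\bar x)^{2/3}$, producing the density $(1-p^{-1/3})\cdot\sum_{R\in\Sigma_p}\frac{1}{\Disc_p(R)|\Aut(R)|}\int_{(R/\Z_p)^{{\rm Prim}}}i(\bar x)^{2/3}\,d\bar x$, with the Euler factor $1-p^{-1/3}$ being the primitive-locus density suitably reweighted.

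The main obstacle is the uniform tail estimate on non-$p$-maximal forms \emph{at the second-main-term scale} $X^{5/6}/p^{5/3}$. The classical Davenport--Heilbronn sieve controls such contributions only at the main-term scale $X/p^2$, so an essentially new uniform input is required; once it is in hand, the theorem reduces to careful bookkeeping of local densities and a direct application of the congruence-uniform second-main-term asymptotics of Theorem~\ref{bcfcount1}.
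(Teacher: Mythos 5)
Your outline follows the same broad route the paper takes — Delone--Faddeev to binary cubic forms, congruence-conditioned second main term (Theorem~\ref{shincong}), a sieve to $p$-maximality, and a local density computation to identify the Euler factors — and your local computation of $\mu_2(R,p)$ via the Jacobian factor $i(x)^{2/3}$ matches the paper's Lemma in Section~9.6 in spirit, though you state it heuristically rather than via the $\Aut_{\GL_2(\Z/p^m\Z)}(f)=\Aut_{\GL_2(\Z_p)}(f)\cdot\Disc_p(f)$ computation the paper uses to make it precise. However, the central difficulty is left unresolved, and what you call the ``main obstacle'' is both mischaracterized and substantially larger than ``careful bookkeeping'' once a bound is in hand.

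Two concrete problems. First, the quantity you need to control is not a tail bound on $N(\mathcal W_p;X)$ of the form $O(X/p^2)+O(X^{5/6}/p^{5/3})$ — that is just the expected size of the main terms themselves — but a uniform bound on the \emph{error} $E_n(X)=N(\mathcal W_n;X)-\gamma_1(n)c_1X-\gamma_2(n)c_2X^{5/6}$ over all squarefree $n$, and the sieve $\sum_n\mu(n)N(\mathcal W_n;X)$ runs over all squarefree $n$, not merely primes. Second, direct application of Theorem~\ref{shincong} to $\mathcal W_n$ is useless because $\mathcal W_n$ is cut out by congruences modulo $n^2$, which makes the modulus-dependent error in Theorem~\ref{shincong} blow up; the paper's Section~9.1 introduces a switching correspondence (between ``Type 1'' and ``Type 2'' triples) precisely to replace these by congruences modulo $n$, and this step appears nowhere in your proposal. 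Even after that reduction, the middle range $n\asymp X^{1/6}$ cannot be handled either by Theorem~\ref{shincong} or by the $O_\epsilon(X/n^{2-\epsilon})$ uniformity bound; the paper closes this window with a genuinely new Fourier-analytic equidistribution argument for $w_n(f)$, the number of roots of $f$ in $\P^1(\Z/n\Z)$ (Section~9.4). So while your skeleton is correct, the part you defer is in fact the bulk of the proof, and the specific missing ingredients — the mod-$p^2$-to-mod-$p$ switching identity and the Fourier equidistribution in the middle range — are not foreshadowed by the simple truncate-and-balance scheme you describe.
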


Note that the case where $\Sigma_p$ consists of the maximal cubic
orders over $\Z_p$ for all $p$ yields Theorem~3, and also yields a
corresponding interpretation of the asymptotic constants in Theorem~3
as a product of local Euler factors.  Indeed, these Euler factors
correspond to local weighted counts of the possible cubic algebras
that can arise over $\Q_p$ and over $\Q_\infty=\R$.  Theorem~4 is deduced
by letting $\Sigma_p$ consist of all maximal cubic orders over $\Z_p$ that are not totally
ramified at $p$, and then applying class field theory (see \S8.1 and \S8.5).

Meanwhile, the
case where $\Sigma_p$ consists of {all} orders in \'etale cubic
algebras over $\Q_p$ yields Theorem~\ref{ringwithres}, and again also
yields the analogous interpretation of the constants in
Theorem~\ref{ringwithres}.  Theorem~\ref{gensigma1} thus
simultaneously generalizes Theorems~\ref{main1}, \ref{main2}, 5, and \ref{ringwithres}
in a natural way, and moreover, it yields a natural interpretation of
the various constants $\large{\frac{\pi^2}{72}}$, $\large{
  \frac{\pi^2}{24}}$, $\large{\frac{1}{12\zeta(3)}}$,
$\large{\frac{1}{4\zeta(3)}}$, 4/3, 2, 
etc.\ that appear in the asymptotics of
these theorems.

If we are only interested in the first main term, then we have the
following stronger result:
\begin{theorem}\label{gensigmafmt}
  Let $(\Sigma_p)\cup\Sigma_\infty$ be an acceptable collection of
  local specifications, and let $\Sigma$ denote the set of all
  isomorphism classes of orders $R$ in cubic fields for which
  $R\otimes\Q_p\in\Sigma_p$ for all $p$ and
  $R\otimes\R\in\Sigma_\infty$.  Let $N_3(\Sigma;X)$ denote the number
  of cubic orders $R\in\Sigma$ that satisfy $|\Disc(R)|<X$.  Then
\begin{equation}
  N_3(\Sigma;X)=\,\,\,\,\,\,
  \displaystyle{\Bigl(\frac12\sum_{R\in\Sigma_\infty}
    \frac1{|\Aut(R)|}\Bigr)\cdot
    \prod_p\Bigl(\frac{p-1}{p}\cdot\sum_{R\in\Sigma_p}
    \frac{1}{\Disc_p(R)}\cdot\frac1{|\Aut(R)|}\Bigr)}
  \cdot X+o(X).
\end{equation}
\end{theorem}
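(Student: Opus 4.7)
The plan is to deduce Theorem~\ref{gensigmafmt} from Theorem~\ref{ringwithres} via the Delone--Faddeev bijection and a sieve over primes. First I would apply Delone--Faddeev to identify isomorphism classes of cubic orders $R$ with $\GL_2(\Z)$-equivalence classes of integral binary cubic forms $f$: the bijection preserves discriminants and commutes with base change, so $R\otimes\R\in\Sigma_\infty$ becomes a condition on the sign of $\Disc(f)$, and $R\otimes\Z_p\in\Sigma_p$ becomes a $\GL_2(\Z_p)$-invariant condition on $f$ that, up to a controllable tail at $p$-maximality, depends only on $f$ modulo a suitable power $p^{k_p}$.

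For each cutoff $Y$, let $\Sigma^{(Y)}\supseteq\Sigma$ denote the set of orders satisfying the full specification at primes $p\leq Y$ and at infinity, while only requiring membership in $\Sigma_p$ at primes $p>Y$. The condition at primes $p\leq Y$ is a finite congruence condition modulo $M_Y:=\prod_{p\leq Y}p^{k_p}$, so the congruence-enhanced version of Theorem~\ref{bcfcount1} (proved in Section~6) applies directly and yields
$$N_3(\Sigma^{(Y)};X)\;=\;C_\infty(\Sigma_\infty)\cdot\prod_{p\leq Y}C_p(\Sigma_p)\cdot X\;+\;o_Y(X),$$
with $C_\infty(\Sigma_\infty)=\tfrac12\sum_{R\in\Sigma_\infty}|\Aut_\R(R)|^{-1}$ and $C_p(\Sigma_p)=\tfrac{p-1}{p}\sum_{R\in\Sigma_p}\tfrac{1}{\Disc_p(R)|\Aut(R)|}$.

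The main obstacle is a uniform tail estimate: I need to show that the number of cubic orders of discriminant at most $X$ that are either non-maximal at a prime $p$ or totally ramified at $p$ is $O(X/p^2)$, with implied constant independent of $p$. Since each of these loci cuts out a codimension-two subset of the $\Z_p$-lattice of binary cubic forms, such a uniform bound is accessible by standard lattice-point estimates going back to Davenport--Heilbronn. Summing over $p>Y$ then yields $0\leq N_3(\Sigma^{(Y)};X)-N_3(\Sigma;X)\ll X/Y$; this is where the ``acceptability'' hypothesis, which at large $p$ allows $\Sigma_p$ to omit at worst the totally ramified maximal orders, is used critically.

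Combining the pieces, for each fixed $Y$ we obtain $N_3(\Sigma;X)=C_\infty(\Sigma_\infty)\cdot\prod_{p\leq Y}C_p(\Sigma_p)\cdot X+O(X/Y)+o_Y(X)$. Under acceptability, $C_p(\Sigma_p)=1+O(p^{-2})$ for all sufficiently large $p$, so the partial Euler products converge as $Y\to\infty$; letting $Y=Y(X)$ tend to infinity sufficiently slowly yields the claimed asymptotic $N_3(\Sigma;X)=C_\infty(\Sigma_\infty)\cdot\prod_p C_p(\Sigma_p)\cdot X+o(X)$.
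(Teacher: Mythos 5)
Your overall strategy---translate to binary cubic forms via Delone--Faddeev, count with finitely many congruence conditions imposed at primes $p\le Y$, establish a uniform tail bound at primes $p>Y$, and let $Y\to\infty$---is essentially the paper's own argument (Section~8.4 together with Proposition~\ref{errorestimate} and Lemma~\ref{ramanujan11}). However, you gloss over the two steps that carry the real content, and in one case your stated justification would not actually work.

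First, you write that the locus of forms non-maximal or totally ramified at $p$ ``cuts out a codimension-two subset'' of the $\Z_p$-lattice, so that ``standard lattice-point estimates'' give the $O(X/p^2)$ tail bound uniformly in $p$. This is not how the uniform estimate is obtained, and the density heuristic alone is insufficient: the congruence-enhanced count of Section~5 (equations~(\ref{sestimate})--(\ref{sestimate3})) controls $N(S;X)$ only when the modulus $m$ is $O(X^{1/6})$, and the error term grows with $m$. Since $\W_p$ is a union of congruence classes modulo $p^2$, that route fails precisely for the large primes you need to control. The paper's Proposition~\ref{errorestimate} instead proceeds structurally: via Lemma~\ref{nonmax} a ring non-maximal at $p$ contains a $p$-overring $R'$ or $R''$ of discriminant $\le X/p^2$ (resp. $\le X/p^4$), Lemma~\ref{atmost3} bounds the multiplicity of the map $R\mapsto R'$, and Theorem~\ref{ringwithres} bounds the number of $R'$; the totally-ramified case is handled by class field theory. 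None of this is a generic lattice-point estimate, and your sketch does not engage with the actual difficulty of uniformity in $p$.

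Second, you assert without argument that the congruence version of Theorem~\ref{bcfcount1} ``yields directly'' the Euler factor in the form $C_p(\Sigma_p)=\frac{p-1}{p}\sum_{R\in\Sigma_p}\frac{1}{\Disc_p(R)|\Aut(R)|}$. What the congruence count actually gives is $\frac{\pi^2}{12 n_i}\prod_p\mu_p(S)$ with $\mu_p(S)$ a $p$-adic density. Passing from $\mu_p(S)$ to the local mass expression is the content of Lemma~\ref{ramanujan11}: one uses the Delone--Faddeev correspondence over $\Z_p$, the relation $|\Aut_{\Z_p}(R)|=|\Stab_{\GL_2(\Z_p)}(v)|$, and a Jacobian computation identifying $|\Disc|_p^{-1}dx$ with a $\GL_2(\Q_p)$-invariant measure to get $\mu_p(S)=\frac{\#\GL_2(\F_p)}{p^4}\sum_{R\in\Sigma_p}\frac{1}{\Disc_p(R)|\Aut(R)|}$; the factor $\frac{\#\GL_2(\F_p)}{p^4}=(1-p^{-2})\cdot\frac{p-1}{p}$ is then what produces the $\zeta(2)^{-1}$ that converts $\frac{\pi^2}{12n_i}$ into $\frac{1}{2n_i}$. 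This computation is a genuine step, not an immediate consequence of the congruence count, and should be carried out (or at least stated as a lemma) rather than folded silently into the definition of $C_p$.

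A more minor point: your intermediate formula $N_3(\Sigma^{(Y)};X)=C_\infty\prod_{p\le Y}C_p(\Sigma_p)\,X+o_Y(X)$ is not exactly right for fixed $Y$; the correct constant carries an extra factor $\zeta(2)\prod_{p\le Y}(1-p^{-2})$ that tends to $1$ as $Y\to\infty$. This is harmless in the limit but indicates the local-density-to-Euler-factor bookkeeping was not tracked carefully.
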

The case where, for all $p$, the set $\Sigma_p$ consists of all
maximal cubic rings is Theorem~\ref{DHth1}, while the case where it
consists of all maximal cubic rings that are not totally ramified at
$p$ yields Theorem~\ref{DHth2}.

Our proofs of Theorems 1--\ref{gensigmafmt} and particularly
Theorem~\ref{gensigma1}, though perhaps similar in spirit to the original
arguments of Davenport and Heilbronn, involve a number of new ideas
and refinements both on the algebraic and the analytic side.
First, we begin in Sections 2 and 3 by giving a much shorter and more
elementary derivation of the ``Davenport--Heilbronn correspondence''
between maximal cubic orders and appropriate sets of binary cubic
forms.

Second, we obtain the main term of the asymptotics of
Theorem~\ref{bcfcount1} in Section~5 by counting points not in a
single fundamental domain, but on average in a continuum of
fundamental domains, using a technique of~\cite{dodpf}.
This leads, in particular, to a uniform treatment of the cases of
positive and negative discriminants.  It also leads directly to
stronger error terms; most notably,
we obtain immediately an error term of $O(X^{5/6})$ for the number of
$\GL_2(\Z)$-equivalence classes of integral binary cubic forms of
discriminant less than $X$, improving on Davenport's original
$O(X^{15/16})$.  The $O(X^{5/6})$ term is seen to come from the
``cusps'' or ``tentacles'' of the fundamental regions.

Third, to more efficiently count points in the cusps of these
fundamental regions, we introduce a ``slicing and smoothing''
technique in Section~6, which then allows us to keep track of precise
second order terms and thus also prove the second main term of
Theorem~\ref{bcfcount1}.  The technique works equally well when
counting points satisfying any finite set of congruence conditions
(see Theorem~\ref{shincong}).

Fourth, our use of the Delone--Faddeev correspondence (c.f.\
Section~2) allows us to give an elementary treatment of the analogue
of Theorem \ref{main1} for orders, rather than just fields, as in
Theorem~\ref{ringwithres} and the cases of Theorem~\ref{gensigma1}
where only finitely many local conditions are involved.  We
prove the main terms of Theorems~1--8 in Section 8, using a simplified
computation of $p$-adic densities that is carried out in Section~4.

Finally---in order to treat the second term in cases where infinitely
many local conditions are involved---we introduce a sieving method
that allows one to preserve the second main terms even when certain
natural infinite sets of congruence conditions are applied.  This is 
accomplished in Section 9, using a computation of ``second order
$p$-adic densities'' that is carried out in Section~7.

\vspace{.1in}
\noindent {\bf Remark 1.} We note that an alternative proof of Theorems 3 and 4 has recently been obtained by Taniguchi and Thorne~\cite{TT}, using quite different methods.  Although our proof here is more elementary, the work of Taniguchi--Thorne connects with the theory of Shintani zeta functions, and may thus
have further interesting consequences in that realm.  In fact, it seems clear that the methods here in conjunction with those of~\cite{TT} should together yield even stronger results, e.g., better error terms, than either method alone! We hope to pursue this in future work.

\vspace{.1in}
\noindent {\bf Remark 2.} Readers interested mainly in our new simpler
proofs of the main terms of the Davenport--Heilbronn theorems may
safely skip Sections~6, 7 and 9, which constitute about a half of
this paper.  On the other hand, those interested in the new results on
second main terms may wish to concentrate primarily on these sections.

\section{The Delone--Faddeev correspondence}

A {\it cubic ring} is any commutative ring with unit that is free of
rank 3 as a $\Z$-module.
We begin with a theorem of Delone--Faddeev~\cite{DF} (as refined by
Gan--Gross--Savin~\cite{GGS}) parametrizing cubic rings by
$\GL_2(\Z)$-equivalence classes of integral binary cubic forms.
Throughout this paper, we always use the ``twisted'' action of
$\GL_2(\Z)$ on binary cubic forms, i.e., an element
$\gamma\in\GL_2(\Z)$ acts on a binary cubic form $f(x,y)$ by
\begin{equation}\label{action}
(\gamma f)(x,y) = \frac1{\det(\gamma)}
f((x,y)\gamma).
\end{equation}

\begin{theorem}\label{df}{\em (\cite{DF},\cite{GGS})}
  There is a natural bijection between the set of
  $\GL_2(\Z)$-equivalence classes of integral binary cubic forms and
  the set of isomorphism classes of cubic rings.
\end{theorem}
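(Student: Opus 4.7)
The plan is to describe explicit maps in both directions and show they are well-defined and mutually inverse. Given a cubic ring $R$, I would first produce a ``normalized'' $\Z$-basis: one of the form $(1,\omega,\theta)$ with $\omega\theta\in\Z$. Starting from any basis $(1,\omega',\theta')$ (translating a general basis into this form is immediate, since any basis contains a unit), I write $\omega'\theta' = A + B\omega' + C\theta'$ and replace $\omega' \mapsto \omega' - C$, $\theta' \mapsto \theta' - B$; then the new product lies in $\Z$. Such a normalized basis is unique up to the natural $\GL_2(\Z)$-action on the pair $(\omega,\theta)\bmod\Z$.

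In a normalized basis I may write
\[
\omega^2 = m + b\omega - a\theta, \qquad \theta^2 = \ell + d\omega - c\theta, \qquad \omega\theta = n
\]
for certain integers. Expanding the associativity identities $\omega\cdot(\omega\theta) = \omega^2\cdot\theta$ and $\theta\cdot(\omega\theta) = \omega\cdot\theta^2$ in the basis $(1,\omega,\theta)$ forces $n=-ad$, $m=-ac$, $\ell=-bd$, and imposes no further constraints. Thus $R$ is completely determined by the four integers $(a,b,c,d)$, and I attach to $R$ the binary cubic form $f_R(x,y) = ax^3 + bx^2y + cxy^2 + dy^3$. The main algebraic check is $\GL_2(\Z)$-equivariance: changing the normalized basis by $\gamma = \left(\begin{smallmatrix} r&s\\ t&u \end{smallmatrix}\right) \in \GL_2(\Z)$ sends $f_R$ to $\gamma\cdot f_R$ under the twisted action of the statement. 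The factor $1/\det(\gamma)$ appears naturally because the integer $n = \omega\theta$ rescales by $\det(\gamma)$ under such a basis change, while the cubic coefficients transform by the substitution $(x,y)\mapsto(x,y)\gamma$.

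For the reverse direction, given a form $f$ I would construct $R_f = \Z \oplus \Z\omega \oplus \Z\theta$ with the multiplication table dictated by the formulas above (taking the forced values of $\ell, m, n$). The only genuine obstacle is verifying that this really produces an associative, commutative, unital ring. Commutativity and unitality are immediate, and since every associativity relation among basis products of $\omega$ and $\theta$ reduces, by commutativity, to the two identities already imposed, the same polynomial identities in $(a,b,c,d)$ that determined $(\ell,m,n)$ now guarantee associativity. The two constructions $R\mapsto f_R$ and $f\mapsto R_f$ are then visibly mutually inverse and equivariant under $\GL_2(\Z)$, yielding the bijection of Theorem~\ref{df}.
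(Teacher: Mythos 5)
Your argument follows the paper's proof in all essentials: normalize the basis so that $\omega\theta\in\Z$, read off the multiplication constants to get the form $ax^3+bx^2y+cxy^2+dy^3$, use the two associativity relations to solve for $\ell,m,n$, and check $\GL_2(\Z)$-equivariance. The extra details you supply (the explicit translation to a normal basis, the observation that commutativity reduces all associativity checks to the two stated relations) are useful elaborations but do not change the route.
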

\begin{proof}
  Given a cubic ring $R$, let $\langle 1,\omega,\theta\rangle$ be a
  $\Z$-basis for $R$.  Translating $\omega$ and $\theta$ by the appropriate
  elements of $\Z$, we may assume that $\omega\theta\in\Z$.  In
  the terminology of~\cite{DF}, a basis satisfying the latter
  condition is called {\it normal}.
If $\langle 1,\omega,\theta\rangle$ is a normal basis, then there exist constants
$a,b,c,d,\ell,m,n\in\Z$ such that
\begin{equation}\label{ringlaw3}
\begin{array}{cll}
  \omega\theta &=& n \\
  \omega^2 &=& m - b \omega + a \theta \\
  \theta^2 &=& \ell\, - d \omega + c \theta.
\end{array}
\end{equation}
To the cubic ring
$R$, we associate the binary cubic form $f(x,y)=ax^3+bx^2 y+cx y^2+dy^3$.

In more coordinate-free terms, the form $f(x,y)$ represents the cubic map
$R/\Z\to \wedge^2(R/\Z)\cong\Z$ given by $r\mapsto r\wedge r^2$.
To see this, set $r=x\omega+y\theta$; then
\[
r\wedge r^2 = (x\omega+y\theta)\wedge[x^2(b \omega - a \theta)
+y^2(d \omega - c \theta)]
= f(x,y)(\omega\wedge\theta)
\]
as elements of $\wedge^2(R/\Z)$. In particular, changing the
$\Z$-basis $\langle \omega,\theta\rangle$ of $R/\Z$ by an element
$\gamma\in\GL_2(\Z)$, and then renormalizing the basis in $R$,
transforms the corresponding binary cubic form $f(x,y)$ by that same
element of $\GL_2(\Z)$.

Conversely, given a binary cubic form $f(x,y)=ax^3+bx^2 y+cx
y^2+dy^3$, form a potential cubic ring having multiplication laws
(\ref{ringlaw3}).  The values of $\ell,m,n$ are subject to the
associative law relations
$(\omega\theta)\theta=\omega(\theta^2)$ and
$(\omega^2)\theta=\omega(\omega\theta)$, which when multiplied
out using (\ref{ringlaw3}), yield a system of equations which possesses a
unique solution for $n,m,\ell$, namely
\begin{equation}\label{ringasslaw3}
\begin{array}{rll}
n &=& -ad \\ m &=& -ac \\ \ell &=& -bd.
\end{array}
\end{equation}
If follows that any binary cubic form $f(x,y)=ax^3+bx^2y+cxy^2+dy^3$,
via the recipe (\ref{ringlaw3}) and (\ref{ringasslaw3}), leads to a
unique cubic ring $R=R(f)$.
This is the desired
conclusion.
\end{proof}

The map $f\mapsto R(f)$ has many desirable properties.  First, it is
{\it discriminant-preserving}.  More precisely, if $R$ is a cubic
ring, then we may define the {\it trace} $\Tr(\alpha)\in\Z$ of an element
$\alpha\in R$ as the trace of the $\Z$-linear mapping
$\times\alpha:R\to R$.
The {\it discriminant} $\Disc(R)$ of a cubic ring $R$ is then the
determinant of the bilinear pairing $\Tr(\alpha\beta)_{\alpha,\beta\in
  R}$ on $R$.  It turns out that this discriminant coincides with the
discriminant of the corresponding binary cubic form:

\begin{proposition}\label{disceq}
  The discriminant of an integral binary cubic form $f$
is equal to the discriminant of the corresponding cubic ring $R(f)$.
\end{proposition}

\begin{proof}
An explicit calculation using (\ref{ringlaw3}) and (\ref{ringasslaw3})
easily verifies Proposition~\ref{disceq}.  The proposition can also be
deduced more conceptually as follows.  We observe
that the discriminant of $R(f)$ must be an $\SL_2(\Z)$-invariant
polynomial in $a,b,c,d$ of degree~4.  It is well-known (see, e.g.,~\cite{Hilbert}) that a binary cubic form $f$ possesses, up to scaling,
only one $\SL_2(\Z)$-invariant polynomial of degree 4, namely the
discriminant $\Disc(f)$.  We conclude that
$\Disc(R(f))=c\cdot\Disc(f)$ for some constant~$c$.  To determine $c$,
let $f(x,y)=xy(x-y)$.  Then by (\ref{ringlaw3}), we have
$R(f)\cong\Z^3$ (with the identification $\omega\mapsto (-1,0,0)$ and
$\theta\mapsto (0,-1,0)$).  Since $\Disc(xy(x-y))=1$ with the usual normalization of the discriminant, and
$\Disc(R(f))=\Disc(\Z^3)=1$, we conclude that $c=1$.
\end{proof}

\noindent
Explicitly, the
discriminant of the binary cubic form $f$ (and thus of the
corresponding cubic ring $R(f)$) is given by
\begin{equation}
\Disc(R(f))=\Disc(f)= b^2c^2-4ac^3-4b^3d - 27a^2d^2 + 18abcd.
\end{equation}

Next, we may determine whether $R(f)$ is an integral domain simply
by checking the reducibility/irreducibility of $f$ over $\Q$:

\begin{proposition}\label{intdomain} For an integral binary cubic form $f$, the cubic
  ring $R(f)$ is an integral domain if and only if $f$ is irreducible
  as a polynomial over $\Q$.
\end{proposition}

\begin{proof}
  If $f(x,y)=ax^3+bx^2y+cxy^2+dy^3$ is reducible, then it has a linear
  factor, which (by a change of variable in $\GL_2(\Z)$) we may assume
  is $y$; i.e., $a=0$.  In this case, (\ref{ringlaw3}) and
  (\ref{ringasslaw3}) show that $\omega\theta=0$, so $R(f)$ has zero divisors.

Conversely, if a cubic ring $R$ has zero divisors, then there exists
some element $\omega\in R$ such that $\langle1,\omega\rangle$ spans a
quadratic subring of $R$.  Such an $\omega$ can be constructed as
follows.  Let $\alpha$ and $\beta$ be two nonzero elements of $R$ with
$\alpha\beta=0$, and let $\alpha^3+c_1\alpha^2+c_2\alpha+c_3=0$ be the
characteristic equation of the $\Z$-linear mapping $\times \alpha:R\to R$.
Multiplying both sides by
$\beta$, we see that $c_3=0$, so that
$\alpha(\alpha^2+c_1\alpha+c_2)=0$.  If $\alpha^2+c_1\alpha+c_2=0$,
then we may let $\omega=\alpha$.  Otherwise, note that
$(\alpha^2+c_1\alpha+c_2)^2=c_2(\alpha^2+c_1\alpha+c_2)$, so in that
case we may set $\omega=\alpha^2+c_1\alpha+c_2$, and
$\omega^2=c_2\omega$.  Either way, we see that
$\langle1,\omega\rangle$ spans a quadratic subring of $R$.

Scaling $\omega$ by an integer if necessary, we may assume that
$\omega$ is a primitive vector in the lattice $R\cong \Z^3$, and then
extend $\langle1,\omega\rangle$ to a basis
$\langle1,\omega,\theta\rangle$ of $R$.  Normalizing this basis if
needed, we then see in (\ref{ringlaw3}) that we must have $a=0$,
implying that the associated binary cubic form is reducible.  We
conclude that, under the Delone--Faddeev correspondence, integral
domains correspond to irreducible binary cubic forms.
\end{proof}

Other important properties of the cubic ring $R(f)$ can also be read off easily
from the binary cubic form $f$.  For example, we have

\begin{proposition}\label{aut}
{For an integral binary cubic form $f$, the group of ring
automorphisms of $R(f)$ is naturally isomorphic to the stabilizer of $f$ in
$\GL_2(\Z)$}.
\end{proposition}
\begin{proof}
  This follows directly from the proof of Theorem~\ref{df}: any
  automorphism of $R(f)$ results in a $\GL_2(\Z)$-transformation on
  the chosen normal basis $\omega,\theta$ of $R/\Z$ (which is then
  automatically still normal), thus giving an element of the
  stabilizer of the binary cubic form $f$ in $\GL_2(\Z)$; the converse
  is similarly trivial.
\end{proof}

Finally, we note that the correspondence of Theorem~\ref{df}, and the
analogues of Propositions~\ref{disceq}--\ref{aut}, also hold for cubic
algebras and binary cubic forms over other base rings such as $\C$,
$\R$, $\Q$, $\Q_p$, $\Z_p$, and $\F_p$.  Indeed, let~$T$ denote any one of
these rings. Then a {\it cubic ring over~$T$} can be defined
analogously as any ring with unit that is free of rank 3 as a $T$-module.
Similarly, a {\it binary cubic form over~$T$} is any binary cubic form
with coefficients in~$T$.  Again, $\GL_2(T)$ acts on the space of
binary cubic forms over~$T$ via~(\ref{action}).  With these
definitions, Theorem~\ref{df} and Propositions~\ref{disceq}--\ref{aut}
all hold when ``$\GL_2(\Z)$'' is replaced by ``$\GL_2(T)$'',
``integral binary cubic form'' is replaced by ``binary cubic form
over $T$'', and ``cubic ring'' is replaced by ``cubic ring over $T$'';
the proofs are identical.  This observation will also be very useful
to us in later sections.

\section{The Davenport--Heilbronn correspondence}


A cubic ring is said to be {\it maximal} if it is not a subring of any
other cubic ring.  The first part of the Davenport--Heilbronn
theorem~\cite{DH} describes a bijection (known as the
``Davenport--Heilbronn correspondence'') between maximal cubic rings
and certain special classes of binary cubic forms.  In this section,
we give a simple derivation of this bijection.

By the work of the previous section, in order to obtain the
Davenport--Heilbronn correspondence we must simply determine which
binary cubic forms $f$ yield maximal rings $R(f)$ in the bijection
given by (\ref{ringlaw3}) and (\ref{ringasslaw3}).  Now a cubic ring
$R$ is maximal if and only if the cubic $\Z_p$-algebra
$R_p=R\otimes\Z_p$ is maximal for every $p$ (this is because $R$ is a
maximal ring if and only if it is isomorphic to a product of rings of
integers in number fields).  The condition on $R$ that $R\otimes \Z_p$
be a maximal cubic algebra over $\Z_p$ is called ``maximality at
$p$''.  The following lemma illustrates the ways in which a ring $R$
can fail to be maximal at $p$:
\begin{lemma}\label{nonmax}
Suppose $R$ is not maximal at $p$. Then there is a $\Z$-basis
$\langle1,\omega,\theta\rangle$ of $R$ such
that at least one of the following is true:
\begin{itemize}
\item $\Z+\Z\cdot(\omega/p)+\Z\cdot\theta$ forms a ring
\item $\Z+\Z\cdot(\omega/p)+\Z\cdot(\theta/p)$ forms a ring.
\end{itemize}
\end{lemma}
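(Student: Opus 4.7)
\noindent The claim is local at $p$, so I would work with the $\Z_p$-algebra $R_p := R \otimes \Z_p$; by hypothesis there is a $\Z_p$-subring $R'_p \supsetneq R_p$ inside the \'etale $\Q_p$-algebra $R_p \otimes \Q_p$. My plan is to extract an intermediate subring $R''_p$ with $R''_p/R_p$ isomorphic to $\F_p$ or $\F_p^2$, use Smith normal form on $R_p/\Z_p \hookrightarrow R''_p/\Z_p$ to produce a $\Z_p$-basis of $R_p$ of the required shape, and then lift to a global $\Z$-basis.

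The first step is the technical heart. Applying Smith normal form to the inclusion $R_p/\Z_p \hookrightarrow R'_p/\Z_p$ of free rank-$2$ $\Z_p$-modules gives a $\Z_p$-basis $\{e_1^*, e_2^*\}$ of $R'_p/\Z_p$ with $R_p/\Z_p = \Z_p\,p^{a_1}e_1^* \oplus \Z_p\,p^{a_2}e_2^*$ and $0\le a_1\le a_2$, not both zero. I would then reduce to elementary divisors $(a_1, a_2) \in \{(0,1),(1,1)\}$ by showing, inductively on $a_1 + a_2$, that the intermediate lattice
\[R''_p:=\Z_p + \Z_p\,p^{\max(a_1-1,\,0)}e_1^* + \Z_p\,p^{\max(a_2-1,\,0)}e_2^*\]
is itself a subring of $R'_p$. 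The closure check amounts to expanding each product $(p^{a_i-1}e_i^*)(p^{a_j-1}e_j^*) = p^{a_i+a_j-2}\,e_i^*e_j^*$ in the $\Z_p$-basis of $R'_p$ and verifying that the resulting coefficients are $p$-divisible to the right extent; the key leverage is that the closure of $R_p$ already forces $(p^{a_i}e_i^*)(p^{a_j}e_j^*)\in R_p$, which yields just enough $p$-divisibility on the structure constants of $R'_p$ to conclude.

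Once the Smith form is $(0,1)$ or $(1,1)$, reading off the basis is immediate. In the $(0,1)$ case, setting $\omega := pe_2^*$ and $\theta := e_1^*$ gives $\langle 1,\omega,\theta\rangle$ a $\Z_p$-basis of $R_p$ and $R''_p = \Z_p + \Z_p(\omega/p) + \Z_p\theta$, which is Case~(a). In the $(1,1)$ case, setting $\omega := pe_1^*$ and $\theta := pe_2^*$ yields $R''_p = \Z_p + \Z_p(\omega/p) + \Z_p(\theta/p)$, which is Case~(b).

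Finally, both conditions in the lemma are closure conditions on $\Z$-modules whose generators have denominator dividing $p^2$, so they depend only on the basis $\langle 1,\omega,\theta\rangle$ of $R$ modulo $p^2$. Using that $\GL_2(\Z) \twoheadrightarrow \GL_2(\Z/p^2)$ is surjective (for instance via elementary matrices) on the complement of $\Z\cdot 1$ in $R$, I can realise the $\Z_p$-basis of $R_p$ constructed above as the mod-$p^2$ reduction of a genuine $\Z$-basis of $R$. The main obstacle I expect is the ring-closure induction in the first step: the $p$-adic valuation bookkeeping becomes delicate when $(a_1,a_2)$ is large or unbalanced, and one must carefully combine the closure constraints coming from $R_p$ with the structure of $R'_p$ to extract the precise divisibility estimates needed to keep each $R''_p$ a genuine subring.
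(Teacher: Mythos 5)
Your proposed reduction step is not correct. When $a_1, a_2 \geq 1$, the module $R''_p := \Z_p + \Z_p\,p^{a_1-1}e_1^* + \Z_p\,p^{a_2-1}e_2^*$, rewritten in terms of the $\Z_p$-basis $\langle 1, \omega, \theta\rangle := \langle 1, p^{a_2}e_2^*, p^{a_1}e_1^*\rangle$ of $R_p$, is simply $\Z_p + \Z_p(\omega/p) + \Z_p(\theta/p)$, with elementary divisors $(1,1)$ over $R_p$ regardless of $(a_1,a_2)$. If $f = ax^3+bx^2y+cxy^2+dy^3$ is the binary cubic form of $R_p$ in this normalized basis, that module is a ring precisely when $p$ divides each of $a,b,c,d$. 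But the hypothesis that $R'_p$ is a ring only gives $a\equiv 0 \pmod{p^{2a_2-a_1}}$, $b\equiv 0\pmod{p^{a_2}}$, $c\equiv 0\pmod{p^{a_1}}$, $d\equiv 0\pmod{p^{2a_1-a_2}}$, and the last condition is vacuous once $a_2\geq 2a_1$; so $d$ need not be divisible by $p$. The ``key leverage'' you invoke is exactly one power of $p$ short: closure of $R_p$ gives $v_p$ of the structure constant $c_{112}$ (coefficient of $e_2^*$ in $(e_1^*)^2$) at least $a_2-2a_1$, but closure of $R''_p$ requires at least $a_2-2a_1+1$. A concrete counterexample is $f=p^5x^3+p^3x^2y+pxy^2+y^3$: here $R'_p:=\Z_p+\Z_p(\omega/p^3)+\Z_p(\theta/p)$ is a ring with $(a_1,a_2)=(1,3)$, yet $\Z_p+\Z_p(\omega/p)+\Z_p(\theta/p)$ is not a ring since $d=1$.

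The paper's induction avoids this by decreasing the denominators with respect to a \emph{fixed} basis of $R$, rather than re-scaling the basis $\{e_1^*,e_2^*\}$ of $R'_p$: from $R_1=\Z+\Z(\omega/p^i)+\Z(\theta/p^j)$ with $i\geq j$, it passes to $\Z+\Z(\omega/p^{i-1})+\Z(\theta/p^{j-1})$ if $j>0$, else to $\Z+\Z(\omega/p^{i-1})+\Z\theta$. In your notation this intermediate module is $\Z_p+\Z_p\,p\,e_1^*+\Z_p\,p\,e_2^*$, not $\Z_p+\Z_p\,p^{a_1-1}e_1^*+\Z_p\,p^{a_2-1}e_2^*$, and its elementary divisors over $R_p$ are $(a_1-1,a_2-1)$, not $(1,1)$. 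With this choice all four congruence exponents drop by $1$, so the ring-closure conditions weaken monotonically and the step always succeeds, terminating at $i=1$. Your $p$-adic reformulation and the global lift via surjectivity of $\GL_2(\Z)\to\GL_2(\Z/p^2\Z)$ are fine; it is the induction step that needs to be replaced by the one above.
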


\begin{proof}
  Let $R'\supset R$ be any ring strictly containing $R$ such that the
  index of $R$ in $R'$ is a multiple of $p$, and let $R_1=R'\cap
  (R\otimes_\Z \Z[\frac1p])$.
  Then the ring $R_1$ also strictly contains $R$, and the index of $R$
  in $R_1$ is a power of $p$.  By the theory of elementary divisors,
  there exist nonnegative integers $i\geq j$ and a basis $\langle
  1,\omega,\theta\rangle$ of $R$ such that
  \begin{equation}\label{basisR1} R_1 = \Z +
    \Z(\omega/p^i)+\Z(\theta/p^j). \end{equation} If $i=1$, we are
  done.  Hence we assume $i>1$.

  We normalize the basis $\langle 1,\omega,\theta\rangle$ if
  necessary; this does not affect the truth of equation
  (\ref{basisR1}).  Now suppose the multiplicative structure of $R$ is
  given by (\ref{ringlaw3}) and (\ref{ringasslaw3}).  That the right
  side of (\ref{basisR1}) is a ring translates into the following
  congruence conditions on $a,b,c,d$:\footnote{We follow here the
    convention that, for $e\leq 0$, we have $a\equiv 0$ (mod $p^e$)
    for any integer $a$.}
\begin{equation}\label{conds}
a\equiv 0 \!\!\pmod{p^{2i-j}}, \;\:b\equiv 0 \!\!\pmod{p^{i}}, \;\:
c\equiv 0 \!\!\pmod{p^{j}}, \;\:d\equiv 0 \!\!\pmod{p^{2j-i}}.
\end{equation}
If $j=0$, then replacing $(i,j)$ by $(i-1,j)$ maintains
the truth of the above congruences, and $R_1$ as defined by (\ref{basisR1})
remains a ring.  If $j>0$, then we may replace $(i,j)$
instead by $(i-1,j-1)$.  Thus in a finite sequence of such moves, we arrive
at $i=1$, as desired.
\end{proof}

The lemma implies that a cubic ring $R(f)$ can fail to be maximal at
$p$ in two ways: either (i) $f$ is a multiple of $p$, or (ii) there is some
$\GL_2(\Z)$-transformation of $f(x,y)=ax^3+bx^2y+cxy^2+dy^3$ such that
$a$ is a multiple of $p^2$ and $b$ is a multiple of $p$.

Let $\U_p$ be the set of all binary cubic forms $f$ not satisfying either
of the latter two conditions.  Then we have proven

\begin{theorem}\label{cubmax}{\em (Davenport--Heilbronn~\cite{DH})}
The cubic ring $R(f)$ is maximal at $p$ if and only if $f\in \U_p$.
The cubic ring $R(f)$ is maximal if and only if $f\in \U_p$ for all $p$.
\end{theorem}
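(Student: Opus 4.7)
The plan is to interpret Lemma~\ref{nonmax} in terms of the coefficients of the binary cubic form $f$, and then to check both directions of the biconditional. Since a cubic ring is maximal if and only if it is maximal at every prime, it suffices to fix a prime $p$ and prove that $R(f)$ is maximal at $p$ if and only if $f\in\U_p$.

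For the forward direction, I would argue by contrapositive. If $R(f)$ is not maximal at~$p$, then Lemma~\ref{nonmax} produces a $\Z$-basis $\langle 1,\omega,\theta\rangle$ of $R(f)$ --- equivalently, a $\GL_2(\Z)$-transform of $f$ with coefficients $a,b,c,d$ --- such that either $\Z+\Z(\omega/p)+\Z\theta$ or $\Z+\Z(\omega/p)+\Z(\theta/p)$ is a ring. Specializing the congruence conditions~(\ref{conds}) to $(i,j)=(1,0)$ yields $p^2\mid a$ and $p\mid b$, which is precisely condition~(ii) in the definition of $\U_p$; specializing to $(i,j)=(1,1)$ yields $p\mid a,b,c,d$, which is precisely condition~(i). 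Either way the transformed $f$ fails to lie in $\U_p$, and since both of those conditions are $\GL_2(\Z)$-invariant (condition~(ii) by definition, condition~(i) because divisibility of all coefficients by $p$ is preserved by linear substitutions with unit determinant), we conclude $f\notin\U_p$.

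For the reverse direction, I would explicitly exhibit an overring when $f\notin\U_p$. In case~(i), with $p\mid a,b,c,d$, a direct computation using the multiplication table~(\ref{ringlaw3})--(\ref{ringasslaw3}) shows that all products among $\omega/p$ and $\theta/p$ lie in $\Z+\Z(\omega/p)+\Z(\theta/p)$ --- the conditions needed are exactly $p\mid a,b,c,d$ (for the linear parts) together with $p^2\mid ad,ac,bd$ (for the constants), both of which are forced --- so this set is a cubic ring strictly containing $R(f)$. In case~(ii), after passing to a $\GL_2(\Z)$-transform (which does not alter the isomorphism class of $R(f)$) we may assume $p^2\mid a$ and $p\mid b$; the analogous computation then shows that $\Z+\Z(\omega/p)+\Z\theta$ is a ring properly containing $R(f)$, the key relations being $\omega\theta/p=-ad/p\in\Z$ and $\omega^2/p^2=-ac/p^2+(b/p)(\omega/p)-(a/p^2)\theta$. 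Hence $R(f)$ is non-maximal at~$p$.

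There is no substantive obstacle; the main work is bookkeeping. The delicate point is simply matching the two basis-change alternatives of Lemma~\ref{nonmax} with the two clauses defining $\U_p$ via the specialization of~(\ref{conds}) at $(i,j)=(1,0)$ and $(i,j)=(1,1)$, and then observing that the reverse direction is built into the very computation that produced~(\ref{conds}), so no new work beyond the multiplication table is required.
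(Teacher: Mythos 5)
Your proof is correct and follows the same route as the paper: it uses Lemma~\ref{nonmax} together with the congruence conditions~(\ref{conds}), specialized at $(i,j)=(1,1)$ and $(i,j)=(1,0)$, to identify non-maximality at $p$ with the two clauses defining $\U_p$. The paper leaves the converse direction implicit in the ``translates into'' of~(\ref{conds}), whereas you make it explicit by checking the multiplication table directly; that is a worthwhile clarification rather than a genuinely different method.
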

Note that our definition of $\U_p$ is somewhat simpler than that
used by Davenport--Heilbronn (but is easily seen to be equivalent).

\vspace{.1in}
The discussion above can also be used to deduce a number of other
consequences.  For example, we may use it to determine the number of
index $p$ {subrings} of a given cubic ring $R(f)$ as well as the 
number of cubic rings {containing} a given cubic ring $R(f)$ with index $p$:

\begin{proposition}\label{subring}
  For an integral binary cubic form $f$, the number of index $p$
  subrings of $R(f)$ is equal to $\omega_p(f)$, the number of zeroes in
  $\P^1(\F_p)$ of $f$ modulo $p$.
\end{proposition}


\begin{proposition}\label{supring}
  For an integral binary cubic form $f$, the number of cubic rings in
  $R(f)\otimes \Q$ containing $R(f)$ with index $p$ is equal to the
  number of double zeroes $\alpha\in \P^1(\F_p)$ of $f$ modulo $p$
  such that $f(\alpha')\equiv 0$ $({\rm mod}$~$p^2)$ for all
  $\alpha'\equiv \alpha$ ${\rm mod}$~$p$.
\end{proposition}

\begin{proof}
If $R\subset R'$ with $[R':R]=p$, then we may write $R=\Z+pR'+\Z\theta$, where $\theta$ is 
a well-defined element of $(R'/\Z)/p(R'/\Z)$.   Extending $\theta$ to a $\Z$-basis $1,\omega,\theta$ of $R'$, and 
renormalizing if necessary, we see that $1,\omega,\theta$ is a $\Z$-basis for $R'$ and $1,p\omega,\theta$ is a $\Z$-basis
for $R$.  Regardless of these choices, note that $\theta$ is well-defined in $(R'/\Z)/p(R'/\Z)$, while $p\omega$ is well-defined in $(R/\Z)/p(R/\Z)$. 

Now if $f'(x,y)=a'x^3+b'x^2y+c'xy^2+d'y^3$ is the binary cubic form corresponding to the normal basis $1,\omega,\theta$ of the ring $R'$, then by (\ref{ringlaw3}) we see that $R=\Z+pR'+\Z\theta$ is also a ring if and only if $d'\equiv 0$ (mod $p$), i.e., the image of
$\theta$ in $R'/\Z$ is a root of $f'$ (mod $p$), when $f'$ is viewed as a cubic map $R'/\Z\to\wedge^2(R'/\Z)\cong\Z$ given by $r\mapsto r\wedge r^2$.  In that case, $f(x,y)=a'p^2x^3+b'px^2y+c'xy^2+(d'/p)y^3$ is the binary cubic form corresponding to the basis $1,p\omega,\theta$ of $R$, and this gives the desired bijection between roots of $f'$ (mod $p$) and subrings of $R'$ of index $p$, as stated in Proposition~\ref{subring}.

Similarly, if $f(x,y)=ax^3+bx^2y+cxy^2+dy^3$ is the binary cubic form corresponding to the normal basis $1,p\omega,\theta$ of the ring $R$, then by (\ref{ringlaw3}) we see that the $\Z$-module $R'$ spanned by $1,\omega,\theta$ is also a ring if and only if $a\equiv 0$ (mod~$p^2$) and $b\equiv 0$ (mod $p$), i.e., 
the image of $p\omega$ in $R/\Z$ is a double root of $f$ (mod~$p$) and $f$ takes a value at $p\omega$ that is a multiple of $p^2$, when $f$ is viewed as a cubic map $R/\Z\to\wedge^2(R/\Z)\cong\Z$ given by $r\mapsto r\wedge r^2$.  In that case, $f'(x,y)=(a/p^2)x^3+(b/p)x^2y+cxy^2+dpy^3$ is the binary cubic form corresponding to the basis $1,\omega,\theta$ of $R'$, and this gives the desired bijection between roots $\alpha$ of $f$ (mod $p$) such that $f(\alpha)\equiv 0$ (mod $p^2)$, and rings $R'$ containing $R$ with index $p$, as stated in Proposition~\ref{supring}.
\end{proof}
\section{Local behavior and $p$-adic densities}\label{secpden1}

In this section, we consider elements $f$ in the spaces of binary
cubic forms over the integers $\Z$, the $p$-adic ring $\Z_p$, and
the residue field $\Z/p\Z$.  We denote these spaces by $V_\Z$,
$V_{\Z_p}$, and $V_{\F_p}$ respectively.  The results in this section
are also contained in~\cite{DH}; however, we give here slightly simpler and more
direct proofs.

Aside from the degenerate case $f\equiv 0$ (mod $p$), any form $f\in
V_\Z$ (resp.\ $V_{\Z_p}$, $V_{\F_p}$) determines exactly three points
in $\P^1_{\bar\F_p}$, obtained by taking the roots of $f$ reduced
modulo $p$.  For such a form $f$, define the symbol $(f,p)$ by setting
\[ (f,p) = (f_1^{e_1} f_2^{e_2} \cdots), \] where the $f_i$'s indicate
the degrees of the fields of definition over $\F_p$ of the roots of
$f$, and the $e_i$'s indicate the respective multiplicities of these
roots.  There are thus five possible values of the symbol $(f,p)$,
namely, $(111)$, $(12)$, $(3)$, $(1^21)$, and $(1^3)$.  Furthermore,
it is clear that if two binary cubic forms $f_1,f_2$ over $\Z$ (resp.\
$\Z_p$, $\F_p$) are equivalent under a transformation in $\GL_2(\Z)$
(resp.  $\GL_2(\Z_p)$, $\GL_2(\F_p))$, then $(f_1,p)=(f_2,p)$. By
$T_p(111), T_p(12)$, etc., let us denote the set of $f$ such that
$(f,p)=(111)$, $(f,p)=(12)$, etc.

By the definition of $R(f)$, the ring structure of the quotient ring
$R(f)/(p)$ depends only on the $\GL_2(\F_p)$-orbit of $f$ modulo $p$;
hence the symbol $(f,p)$ indicates something about the structure of
the ring $R(f)$ when reduced modulo $p$.  In fact, writing down the
multiplication laws at one point of each of the five aforementioned
$\GL_2(\F_p)$-orbits demonstrates that
\[
(f,p)= (f_1^{e_1} f_2^{e_2} \cdots) \iff
  R(f)/(p) \cong \F_{p^{f_1}}[t_1]/(t_1^{e_1})
            \oplus\F_{p^{f_2}}[t_2]/(t_2^{e_2})\oplus\cdots.\]
In particular, it follows that for $f\in \U_p$, the symbol $(f,p)$
conveys precisely the splitting behavior of $R(f)$ at $p$.  For example,
if $(f,p)=(1^3)$ for $f\in \U_p$, then this means the maximal cubic
ring $R(f)$ is totally ramified at $p$.

Now, for any set $S$ in $V_\Z$ (resp.\ $V_{\Z_p}$, $V_{\F_p}$) that is
definable by congruence conditions, let us denote by $\mu(S)=\mu_p(S)$
the $p$-adic density of the $p$-adic closure of $S$ in $V_{\Z_p}$,
where we normalize the additive measure $\mu$ on $V_{\Z_p}=\Z_p^4$ so
that $\mu(V_{\Z_p})=1$ (i.e., we have taken the product of the usual
additive measures on $\Z_p$).  The following lemma determines the
$p$-adic densities of the sets $T_p(\cdot)$.

\begin{lemma}
We have
\[
\begin{array}{rcl}
\mu(T_p(111))&=&\!\!\frac{1}{6}\,(p-1)^2\;p\;(p+1)\,/\,p^{4}\\[.045in]
\mu(T_p(12))&=& \!\!\frac{1}{2}\,(p-1)^2\;p\;(p+1)\,/\, p^{4} \\[.045in]
\mu(T_p(3))&=& \!\!\frac{1}{3}\,(p-1)^2\;p\;(p+1)\,/\, p^{4} \\[.045in]
\mu(T_p(1^21))&=& \,\;\;(p-1)\;\,p\;(p+1)\,/\, p^{4} \\[.045in]
\mu(T_p(1^3))&=& \,\;\;(p-1)\;\;\;(p+1)\,/\,p^{4}\;. \\[.045in]
\end{array}
\]
\end{lemma}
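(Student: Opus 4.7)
The plan is to reduce the $p$-adic computation to a finite count in $V_{\F_p}$. Since the symbol $(f,p)$ depends only on the reduction of $f$ modulo $p$, and $\mu$ is normalized so that $\mu(V_{\Z_p})=1$, one has $\mu_p(T_p(\tau)) = \#\bar{T}_p(\tau)/p^4$, where $\bar{T}_p(\tau)\subset V_{\F_p}$ is the image of $T_p(\tau)$ under reduction modulo $p$. The task is therefore to count, for each of the five symbols $\tau\in\{(111),(12),(3),(1^21),(1^3)\}$, the number of nonzero $f\in V_{\F_p}$ whose reduction has factorization type $\tau$.

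For this I would exploit the bijection between nonzero binary cubic forms over $\F_p$ modulo the scaling action of $\F_p^\times$ and degree-$3$ effective divisors on $\P^1_{\F_p}$: a form $f$ maps to its scheme-theoretic zero divisor, and the symbol $(f,p)$ is just the combinatorial type of that divisor. Hence $\#\bar{T}_p(\tau) = (p-1)\cdot D_\tau$, where $D_\tau$ counts degree-$3$ effective divisors on $\P^1_{\F_p}$ of type $\tau$. The $D_\tau$ are then elementary from the standard enumeration of closed points of $\P^1_{\F_p}$, which furnishes $p+1$ points of degree $1$, $(p^2-p)/2$ points of degree $2$, and $(p^3-p)/3$ points of degree $3$. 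One obtains $D_{(111)}=\binom{p+1}{3}$, $D_{(12)}=(p+1)(p^2-p)/2$, $D_{(3)}=(p^3-p)/3$, $D_{(1^21)}=(p+1)\cdot p$, and $D_{(1^3)}=p+1$. Multiplying each by $(p-1)/p^4$ reproduces exactly the five values stated in the lemma.

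There is no real obstacle here; the only points that need attention are the symmetry factors (no $1/2$ for $(1^21)$ because the doubled and simple roots are distinguished by their multiplicities, but a $1/6$ for $(111)$ because the three simple roots are interchangeable) and the single $(p-1)$ factor from the leading-coefficient scaling. As a sanity check, the five quantities sum to $(p-1)(p+1)(p^2+1)/p^4 = (p^4-1)/p^4 = 1-1/p^4$, which accounts precisely for the remaining $p$-adic mass in the excluded degenerate locus $f\equiv 0 \pmod p$.
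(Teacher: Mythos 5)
Your proposal is correct and is essentially the paper's own argument: both reduce to a finite count over $\F_p$, identify (up to the $(p-1)$ scaling factor) nonzero forms with their root configurations in $\P^1_{\bar\F_p}$, and count configurations of each type using closed points of degree $1$, $2$, $3$. Your repackaging in terms of effective divisors and the final sanity check (the densities summing to $1-1/p^4$, the mass of the nondegenerate locus) are nice touches, but the underlying computation is identical.
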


\begin{proof}
Since the criteria for membership of $f$ in a $T_p(\cdot)$
depend only on the residue class of $f$ modulo $p$, it suffices
to consider the situation over $\F_p$.
We examine first $\mu(T_p(111))$.  The number of unordered triples of
distinct points in $\P^1_{\F_p}$ is $\frac{1}{6}(p+1)p(p-1)$.
Furthermore, given such a triple of points, there is a unique binary
cubic form, up to scaling, having this triple of points as its roots.
Since the total number of binary cubic forms
over $\F_p$ is $p^{4}$, it follows that
${\textstyle
\mu(T_p(111))=\frac{1}{6}\bigl[(p+1)p(p-1)\bigr](p-1)/\,p^{4},}
$
as given by the lemma.

Similarly, the number of unordered triples of points, one member of
which is in $\P^1_{\F_p}$ while the other two are $\F_p$-conjugate in
$\P^1_{\F_{p^2}}$, is given by $\frac{1}{2}(p+1)(p^2-p)$.  We thus have
${\textstyle
  \mu(T_p(12))=\frac{1}{2}\bigl[(p+1)(p^2-p)\bigr](p-1)/\,p^{4}.}$
Also, the number of unordered $\F_p$-conjugate triples of
distinct points in $\P^1_{\F_{p^3}}$ is $(p^3-p)/3$, and hence
${\textstyle \mu(T_p(3))=\frac13\bigl[(p^3-p)\bigr](p-1)/\,p^{4}.}$

Meanwhile, the number of pairs $(x,y)$ of distinct points in
$\P^1_{\F_p}$ is given by $(p+1)p$, so that the number of binary cubic
forms over $\F_p$ having a double root at some point $x$ and a single
root at another point $y$ is $[(p+1)p](p-1)$.  Thus ${\textstyle
  \mu(T_p(1^21))=\bigl[(p+1)p](p-1) \bigr]/\,p^{4}.}$ Finally, the
number of binary cubic forms over $\F_p$ having a triple root in
$\P^1_{\F_p}$ is $(p+1)(p-1)$, yielding ${\textstyle
  \mu(T_p(1^3))=(p+1)(p-1)/p^4}$ as desired.
\end{proof}

We next wish to determine the $p$-adic densities of the sets $\U_p$.
Let $\U_p(\cdot)$ denote the subset of elements $f\in T_p(\cdot)$
such that $R(f)$ is maximal at $p$.
If $f$ is an element of $T_p(111)$, $T_p(12)$, or $T_p(3)$, then
$R(f)$ is clearly maximal at $p$, as its discriminant is coprime to $p$.
Thus $\U_p(111)=T_p(111)$, $\U_p(12)=T_p(12)$, and $\U_p(3)=T_p(3)$.
If a binary cubic form $f$ is in $T_p(1^21)$ or $T_p(1^3)$,
then it can clearly be brought into the form
$f(x,y)=ax^3+bx^2y+cxy^2+dy^3$ with $a\equiv b\equiv 0$ (mod $p$),
namely, by sending the unique multiple root of $f$ in $\P^1_{\F_p}$ to
the point $(1,0)$ via a transformation in $\GL_2(\Z)$.  Of all $f\in
T_p(1^2 1)$ or $T_p(1^3)$ that have been rendered in such a form, a
proportion of $1/p$ actually satisfy the congruence $a\equiv 0$
(mod~$p^2$) of condition~(ii).  Thus a proportion of $(p-1)/p$ of
forms in $T_p(1^21)$ and in $T_p(1^3)$ correspond to cubic rings
maximal at~$p$.  We have  proven:

\begin{lemma}\label{udensities}
We have
\[
\begin{array}{rcl}
\mu(\U_p(111))&=&\!\!\frac{1}{6}\,(p-1)^2\;p\;(p+1)\,/\,p^{4}\\[.045in]
\mu(\U_p(12))&=& \!\!\frac{1}{2}\,(p-1)^2\;p\;(p+1)\,/\, p^{4} \\[.045in]
\mu(\U_p(3))&=& \!\!\frac{1}{3}\,(p-1)^2\;p\;(p+1)\,/\, p^{4} \\[.045in]
\mu(\U_p(1^21))&=& \:\;\:\!(p-1)^2\;(p+1)\,/\, p^{4} \\[.045in]
\mu(\U_p(1^3))&=& \:\;\:\!(p-1)^2\;(p+1)\,/\,p^{5}\,. \\[.045in]
\end{array}
\]
\end{lemma}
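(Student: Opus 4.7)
The plan is to combine Theorem \ref{cubmax} with the densities $\mu(T_p(\cdot))$ from the preceding lemma. By Theorem \ref{cubmax}, an $f$ with $\bar f\not\equiv 0$ lies in $\U_p$ if and only if no $\GL_2(\Z_p)$-translate of $f$ satisfies $p^2\mid a$ and $p\mid b$ simultaneously. Partitioning by the symbol $(f,p)$, it therefore suffices to determine, for each of the five splitting types, the $p$-adic density of $\U_p(\cdot)$ inside $T_p(\cdot)$.

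For the three symbols $(111)$, $(12)$, $(3)$, the reduction $\bar f$ is squarefree over $\bar\F_p$, so no $\GL_2(\Z_p)$-translate of $f$ can have a double root at $(1:0)$ modulo $p$ --- which is precisely what $p^2\mid a$ combined with $p\mid b$ would force. Hence $T_p(\cdot)\subseteq\U_p$ in these three cases, so $\mu(\U_p(\cdot))=\mu(T_p(\cdot))$, giving the first three claimed formulas immediately.

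For the ramified cases $(1^21)$ and $(1^3)$, only condition (ii) of Theorem \ref{cubmax} can fail. The key observation is that $p^2\mid a$ and $p\mid b$ imply $y^2\mid\bar f$, so any $\GL_2(\Z_p)$-element realizing condition (ii) must send $(1:0)$ to a multiple root of $\bar f$; since in both of these symbols the multiple root of $\bar f$ is \emph{unique}, this singles out one $\GL_2(\Z_p)$-normalization in which to test condition (ii). I would fiber $T_p(\cdot)$ over $\P^1(\F_p)$ by sending $f$ to the unique multiple root of $\bar f$: by $\GL_2(\F_p)$-transitivity on $\P^1(\F_p)$ all fibers carry equal $p$-adic measure, and on the slice where the multiple root is $(1:0)$ (so $p\mid a$ and $p\mid b$ automatically, with also $p\mid c$ in the $(1^3)$ case) the bad locus is cut out by the one additional independent congruence $a\equiv 0\pmod{p^2}$, of relative density $1/p$. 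Thus $\mu(\U_p(\cdot))=\frac{p-1}{p}\mu(T_p(\cdot))$ for both $(1^21)$ and $(1^3)$, producing the last two formulas.

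The one delicate point is confirming that the slicing genuinely decouples the congruence $a\equiv 0\pmod{p^2}$ from the rest of the parameters defining $T_p(\cdot)$. The cleanest way is to do the count directly: for each candidate multiple root $P\in\P^1(\F_p)$, count the forms having $P$ as their multiple root, impose the mod-$p^2$ congruence on the leading coefficient in the normalization that sends $P$ to $(1:0)$, and sum over $P$ using the transitive $\GL_2(\F_p)$-action. Once this fibration picture is set up, the remaining arithmetic is routine bookkeeping against the counts from the previous lemma.
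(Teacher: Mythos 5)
Your proposal is correct and follows essentially the same route as the paper: for the squarefree symbols $(111)$, $(12)$, $(3)$ the discriminant of $\bar f$ is nonzero mod $p$ so condition (ii) cannot occur, and for $(1^21)$, $(1^3)$ one normalizes the unique multiple root to $(1:0)$ and observes that the extra congruence $a\equiv 0\pmod{p^2}$ cuts out a relative density of $1/p$. Your additional remarks verifying that the normalization is well-defined and that the $p^2$-congruence decouples are just an expansion of what the paper leaves implicit, not a different argument.
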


Following~\cite{DH} let $\V_p$ denote the set of elements $f\in
\U_p$ such that $(f,p)\neq (1^3)$.  Then it is clear from the above
arguments that the elements of $\V_p$ correspond to orders in
\'etale cubic algebras over $\Q$ that are maximal at $p$ and in which $p$ does not totally ramify.
The set $\V_p$ plays an important role in understanding the 3-torsion
in the class groups of cubic fields (see Section 8).

Using the fact that $\U_p$ is simply the union of the
$\,\U_p(\sigma)$'s, while $\mathcal V_p$ is
the union of the $\mathcal U_p(\sigma)$'s where $\sigma\neq
(1^3)$, we obtain from Lemma~\ref{udensities}:

\begin{lemma}\label{uvdensity}
We have
\[
\begin{array}{rcl}
\mu(\U_p)&=&\!\!(p^3-1)(p^2-1)\,/\,p^{5}\\[.045in]
\mu(\V_p)&=& \!\!(p^2-1)^2\,/\,p^{4}\,. \\[.045in]
\end{array}
\]
\end{lemma}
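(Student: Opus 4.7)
The plan is to prove Lemma \ref{uvdensity} by simply summing the disjoint densities tabulated in Lemma \ref{udensities}. Since the splitting symbol $(f,p)$ partitions $V_{\F_p}$ (up to the measure-zero degenerate locus $f\equiv 0\pmod p$) into the five sets $T_p(\sigma)$ for $\sigma\in\{(111),(12),(3),(1^21),(1^3)\}$, and since the subset $\U_p$ of forms giving rise to rings maximal at $p$ is the disjoint union of the $\U_p(\sigma)$, we may compute $\mu(\U_p)$ and $\mu(\V_p)$ by direct addition.

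First I would group the three ``unramified'' contributions. Observe that $\mu(\U_p(111))$, $\mu(\U_p(12))$, and $\mu(\U_p(3))$ share the common factor $(p-1)^2 p(p+1)/p^4$, with respective coefficients $\tfrac16,\tfrac12,\tfrac13$ which sum to $1$. Hence
\[
\mu(\U_p(111))+\mu(\U_p(12))+\mu(\U_p(3))=\frac{(p-1)^2 p(p+1)}{p^4}=\frac{(p-1)^2(p+1)}{p^3}.
\]
Next I would add the $(1^21)$ contribution to obtain $\mu(\V_p)$. Clearing to a common denominator,
\[
\mu(\V_p)=\frac{(p-1)^2(p+1)}{p^3}+\frac{(p-1)^2(p+1)}{p^4}=\frac{(p-1)^2(p+1)(p+1)}{p^4}=\frac{(p^2-1)^2}{p^4}.
\]

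Finally I would add the totally ramified piece $\mu(\U_p(1^3))=(p-1)^2(p+1)/p^5$ to $\mu(\V_p)$, again putting things over the common denominator $p^5$:
\[
\mu(\U_p)=\frac{p(p-1)^2(p+1)^2+(p-1)^2(p+1)}{p^5}=\frac{(p-1)^2(p+1)\bigl(p(p+1)+1\bigr)}{p^5}.
\]
Factoring $p(p+1)+1=p^2+p+1$ and combining $(p-1)(p^2+p+1)=p^3-1$ with $(p-1)(p+1)=p^2-1$ yields $\mu(\U_p)=(p^3-1)(p^2-1)/p^5$, as claimed.

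There is no real obstacle here, as the argument is purely arithmetic summation from Lemma \ref{udensities}; the only thing to be careful about is the factorization $p(p+1)+1=p^2+p+1$, which is what produces the clean Euler-factor form $(p^3-1)(p^2-1)/p^5$ (matching $1/\zeta_p(2)\zeta_p(3)$ up to the factor $p^{-5}$) rather than an unfactored polynomial expression.
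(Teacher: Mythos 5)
Your proposal is correct and follows exactly the paper's approach: the paper derives Lemma~\ref{uvdensity} by noting that $\U_p$ is the disjoint union of the $\U_p(\sigma)$ and $\V_p$ is the union over $\sigma\neq(1^3)$, then adding the densities from Lemma~\ref{udensities}. You have simply written out the arithmetic that the paper leaves implicit, and that arithmetic checks out.
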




\section{The number of binary cubic forms of bounded discriminant}
Let $V_\R$ denote the vector space of binary cubic forms over $\R$.
Then the action of $\GL_2(\R)$ on $V_\R$ has two nondegenerate orbits,
namely the orbit $V_\R^{(0)}$ consisting of elements having positive
discriminant, and $V_\R^{(1)}$ consisting of those having negative
discriminant.  In this section we wish to understand the number
$N(V_\Z^{(i)};X)$ of \underline{irreducible} $\GL_2(\Z)$-orbits on
$V^{(i)}_\Z := V_\Z\cap V^{(i)}_\R$ having absolute discriminant less than $X$ ($i=0,1$),
where we say that a $\GL_2(\Z)$-orbit on $V_\Z$ is {\it irreducible}
if it consists of binary cubic forms that are irreducible over
$\Q$. In particular, we prove the following strengthening of
Davenport's theorem on the number of $\GL_2(\Z)$-equivalence classes
of irreducible binary cubic forms having bounded discriminant:

\begin{theorem}\label{bcfcount}
$\;N(V^{(0)}_\Z;X)= \displaystyle{\frac{\pi^2}{72}\cdot X + O(X^{5/6})}\,;\;$
$N(V^{(1)}_\Z;X)= \displaystyle{\frac{\pi^2}{24}\cdot X + O(X^{5/6})}\,$.
\end{theorem}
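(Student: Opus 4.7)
The plan is to prove Theorem~\ref{bcfcount} by a geometry-of-numbers count in the spirit of \cite{dodpf}, averaging the lattice-point count over a continuous family of translated fundamental domains rather than counting in a single one. The advantage of the averaging technique is that it treats the positive- and negative-discriminant cases uniformly, smooths out the boundary of the fundamental domain, and cleanly isolates the contribution of the ``cusps'' --- which is where the $O(X^{5/6})$ error term will originate.

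Concretely, I would fix representatives $v^{(i)}\in V^{(i)}$ with convenient stabilizers, and use the Iwasawa decomposition $\GL_2(\R)=NTK\Lambda$ together with a Siegel fundamental set for $\GL_2(\Z)\backslash\GL_2(\R)$ to construct a fundamental domain $\FF^{(i)}$ for the action of $\GL_2(\Z)$ on $V^{(i)}$. Since the discriminant is homogeneous of degree $4$, intersecting with $\{|\Disc|<X\}$ restricts the scalar factor $\lambda$ to a range of size $\lesssim X^{1/4}$. The averaging step is to integrate the count of irreducible lattice points in $g\cdot\FF^{(i)}_X$ against a smooth weight $\psi(g)$ of compact support on $\GL_2(\R)$ with $\int\psi=1$; swapping sum and integral recasts the count as a sum over $V_\Z$ of smoothed indicator functions, which is much easier to estimate than the indicator of a single fundamental domain.

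The main term then emerges by unfolding this smoothed count into a Lebesgue volume: the averaged count of lattice points in the interior (the ``main body'') of $\FF^{(i)}_X$ equals $\Vol\{f\in V^{(i)}:|\Disc(f)|<X\}$ divided by an appropriate covolume factor, up to cusp corrections. A direct Haar-measure computation yields $(\pi^2/72)\,X$ for $i=0$ and $(\pi^2/24)\,X$ for $i=1$, with the factor of $3$ between them reflecting the different volumes of the stabilizers of $v^{(0)}$ and $v^{(1)}$ inside $\GL_2(\R)$.

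The main obstacle --- and the source of the improved error --- is the cuspidal analysis. The fundamental domain $\FF^{(i)}$ has a ``tentacle'' in the direction where the diagonal $T$-parameter tends to infinity, and in this region the leading coefficient $a$ of a lattice form $f=ax^3+bx^2y+cxy^2+dy^3$ is forced into a narrow range. The critical observation is that lattice points with $a=0$ give reducible forms (since then $y\mid f$) and so are excluded from the irreducible count. After removing the slice $\{a=0\}$, the constraint $|a|\geq 1$ together with $|\Disc|<X$ confines the cuspidal $T$-parameter to a range of size $\lesssim X^{1/6}$, so the truncated cuspidal region has volume $O(X^{5/6})$. The averaging of the previous step is essential here: it replaces a potentially wild discrete count near the cusp by a smooth volume estimate, and establishing this lattice-point-to-volume comparison with the requisite precision, while cleanly excising the reducible forms, is the heart of the argument.
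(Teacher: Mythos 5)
Your proposal follows the paper's own proof in Section 5 essentially step for step: averaging the lattice count over a continuum of translates of a fundamental domain (the paper averages $v$ over a compact $K$-invariant set $B$ with the invariant measure $|\Disc(v)|^{-1}\,dv$, which is equivalent to your smoothed weight on $\GL_2(\R)$), using Davenport's lemma to replace the count in each translated region by its volume plus boundary error, observing that $a=0$ forces reducibility so that $|a|\ge1$ truncates the cusp and yields the $O(X^{5/6})$ error from $\int t\lambda^3\,d^\times t\,d^\times\lambda$, and finally computing the main-term volume via a Jacobian/Haar-measure calculation giving $\pi^2/(12n_i)$. Two loose ends you should not skip: reducible forms with $a\neq0$ also occur and must be bounded separately (Lemma~\ref{reducible} shows they are $O(X^{3/4+\epsilon})$), and since $N(V^{(i)}_\Z;X)$ counts $\GL_2(\Z)$-orbits, each orbit is represented $n_i/m(x)$ times in the averaged fundamental domain, so the $C_3$-stabilized forms must be counted with weight $1/3$ and shown to be negligible (Lemma~\ref{reducible2}, again $O(X^{3/4+\epsilon})$); both fit comfortably inside your $O(X^{5/6})$ budget.
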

In \cite{Davenport} and \cite{Davenportn}, Davenport had obtained the
main terms of the above theorem with an error bound of $O(X^{15/16})$.

\subsection{Reduction theory}

Define the usual subgroups $K_1,A_+,N$, and $\Lambda$ of $\GL_2(\R)$ as follows:
\begin{eqnarray*}
K_1\,&=&\{\mbox{orthogonal transformations in $\GL_2(\R)$}\};\\
A_+&=&\{a(t):t\in \R_+\},\,\,\mbox{where}\,\,\,a(t)={\footnotesize
\left(\begin{array}{cc} t^{-1} & {} \\ {} & t \end{array}\right)};\\
N\,&=&\{n(u):u\in \R\},\,\,\mbox{where}\,\,\,n(u)={\footnotesize
\left(\begin{array}{cc} 1 & {} \\ u & 1 \end{array}\right)};\\
\Lambda\,&=&\{{\footnotesize
\left(\begin{array}{cc} \lambda & {} \\ {} & \lambda \end{array}\right)}
\}\,\,\mbox{where}\,\,\,\lambda>0.
\end{eqnarray*}

It is well-known (see \cite[Theorem 6.46]{reference}) that the natural
product map $K_1\times A_+\times N\rightarrow \GL_2(\R)$ is an
analytic diffeomorphism. In fact, for any $g\in \GL_2(\R)$, there
exist unique $k\in K_1$, $a=a(t)\in A_+$, $n=n(u)\in N$, and
$\lambda\in\Lambda$ such that $g=k\, a\, n\, \lambda$; this
is the Iwasawa decomposition of $\GL_2(\R)$.

Let $\FF$ denote Gauss's usual fundamental domain for
$\GL_2(\Z)\backslash \GL_2(\R)$ in $\GL_2(\R)$. Then 
$\FF$ may be expressed in the
form $\FF= \{nak\lambda:n\in N'(a),a\in A',k\in
K,\lambda\in\Lambda\}$, where
\begin{equation}
N'(a)= \left\{\left(\begin{array}{cc} 1 & {} \\ {n} & 1 \end{array}\right):
        n\in\nu(a) \right\}    , \;\;
A' = \left\{\left(\begin{array}{cc} t^{-1} & {} \\ {} & t \end{array}\right):
       t\geq \sqrt[4]3/\sqrt{2} \right\}, \;\;
\Lambda = \left\{\left(\begin{array}{cc} \lambda & {} \\ {} & \lambda
        \end{array}\right):
        \lambda>0 \right\},
\end{equation}
and $K$ is as usual the (compact) real special orthogonal group ${\rm
  SO}_2(\R)$; here $\nu(a)$ is the union of either one or two
subintervals of $[-\frac12,\frac12]$ depending only on the value of $a\in
A'$. Furthermore, if $a$ is such that $t\geq 1$, then $\nu(a)=[-\frac12,\frac12]$.
(See, e.g., \cite[Ch.\ 7, Th.\ 1]{Sercia}.)


For $i\in\{0,1\}$, let $n_i$ denote the cardinality of the stabilizer in
$\GL_2(\R)$ of any element $v\in V^{(i)}_\R$ (by the correspondence
of Theorems~\ref{df} and \ref{aut} 
over $\R$, we have $n_1=\Aut_\R(\R^3)=6$ and
$n_2=\Aut_\R(\R\oplus\C)=2$).  Then for any $v\in V_\R^{(i)}$, $\FF v$
will be the union of $n_i$ fundamental domains for the action of
$\GL_2(\Z)$ on $V^{(i)}_\R$.  Since this union is not necessarily
disjoint, $\FF v$ is best viewed as a multiset, where the multiplicity
of a point $x$ in $\FF v$ is given by the cardinality of the set
$\{g\in\FF\,\,|\,\,gv=x\}$. Evidently, this multiplicity is a number
between 1 and $n_i$.

Even though the multiset $\FF v$ is the union of $n_i$ fundamental
domains for the action of $\GL_2(\Z)$ on $V^{(i)}_\R$, not all
elements in $\GL_2(\Z)\backslash V_\Z$ will be represented in $\FF v$
exactly $n_i$ times.  In general, the number of times the
$\GL_2(\Z)$-equivalence class of an element $x\in V_\Z$ will occur in
the multiset $\FF v$ is given by $n_i/m(x)$, where $m(x)$ denotes the
size of the stabilizer of $x$ in $\GL_2(\Z)$. Now the stabilizer in
$\GL_2(\Z)$ of an irreducible element $x\in V_\Z$ is the
group of ring automorphisms of the order corresponding to $x$ under
the Delone--Faddeev correspondence (see Section 2), and is thus either trivial or
$C_3$.  We conclude that, for any $v\in V_\R^{(i)}$, the product
$n_i\cdot N(V_\Z^{(i)};X)$ is exactly equal to the number of
irreducible integer points in $\FF v$ having absolute discriminant
less than $X$, with the slight caveat that the (relatively rare---see
Lemma~\ref{reducible2}) $C_3$-points are to be counted with weight
$1/3$

Now the number of such integer points can be difficult to count
in a single such fundamental domain.  The main technical
obstacle is that the fundamental region $\FF v$ is not bounded, but
rather has a cusp going off to infinity which in fact contains
infinitely many integer points, including many irreducible points.  We
simplify the counting of such points by ``thickening'' the cusp; more
precisely, we compute the number of points in the fundamental region
$\FF v$ by averaging over lots of such fundamental domains, i.e., by
averaging over points $v$ lying in a certain compact subset
$B$ of $V_\R$.

\subsection{Estimates on reducibility}

We first consider the reducible elements in the multiset
$$\RR_X(v):=\{w\in \FF v:|\Disc(w)|<X\},$$ where $v$ is any vector in a
fixed compact subset $B$ of $V_\R$.  Note that if a binary cubic form
$ax^3+bx^2y+cxy^2+dy^3$ satisfies $a=0$, then it is reducible over
$\Q$, since $y$ is a factor. The following lemma, proved in
\cite[Lem.~3]{Davenport} and \cite[Lem.~2]{Davenportn}, shows that
for binary cubic forms in $\RR_X(v)$, reducibility with $a\neq0$ does
not occur very~often.

\begin{lemma}\label{reducible}
  Let $v\in B$ be any point of nonzero discriminant, where $B$ is any
  fixed compact subset of $V_\R$ containing only elements having
  discriminant greater than $1$. Then the number of integral binary
  cubic forms $ax^3+bx^2y+cxy^2+dy^3\in \RR_X(v)$ that are reducible
  with $a\neq 0$ is $O(X^{3/4+\epsilon})$, where the implied constant
  depends only on $B$.
\end{lemma}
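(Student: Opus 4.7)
The plan is to parametrize each reducible $f \in V_\Z$ with $a(f) \neq 0$ via its rational linear factor, and bound the count slice-by-slice using Davenport's lattice-point lemma in the associated sublattice.

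First, any such $f$ has a rational root $(p:q) \in \P^1(\Q)$ with $\gcd(p, q) = 1$ and $q \geq 1$; by Gauss's lemma, $f(x, y) = (qx - py) \cdot Q(x, y)$ with $Q = Ax^2 + Bxy + Cy^2 \in \Sym^2 \Z$. Direct expansion yields the key discriminant identity
\[
\Disc(f) \;=\; Q(p, q)^2 \cdot \Disc(Q) \;=\; (Ap^2 + Bpq + Cq^2)^2(B^2 - 4AC).
\]
The condition $f \in \FF v$ with $v \in B$ of nonzero discriminant puts the coefficients $(a, b, c, d)$ into a box with $|a| \ll \lambda t^{-3}$, $|b| \ll \lambda t^{-1}$, $|c| \ll \lambda t$, $|d| \ll \lambda t^3$, $\lambda \ll X^{1/4}$ and $t \geq t_0$; since $a = qA$ is a nonzero integer, $t \ll \lambda^{1/3}$. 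A M\"obius-transform analysis of how the roots of $v$ transform under $g = nak\lambda \in \FF$ then shows that the rational root $p/q$ must satisfy $|p/q| \ll t^2$ up to bounded perturbations.

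Next, for each primitive pair $(p, q)$ with $q \geq 1$, the set $L_{p, q} := \{f \in V_\Z : f(p, q) = 0\}$ is a rank-$3$ sublattice of $V_\Z$ of covolume $J_{p, q} = \sqrt{p^6 + p^4 q^2 + p^2 q^4 + q^6} \asymp H^3$, where $H := \max(|p|, q)$. The number of integer $f \in L_{p, q} \cap \FF v \cap \{|\Disc|<X\}$ is bounded, by Davenport's lattice-point lemma applied in the sublattice, by $\Vol_3/J_{p, q}$ plus a boundary contribution. One estimates the slice volume $\Vol_3$ by pushing the delta-function constraint $\delta(f(p, q))$ through the Iwasawa-type parametrization $g = nak\lambda$ of $\FF$: the constraint $f(p, q) = 0$ forces $(p, q)g$ into the root locus of $v$, which determines one of the parameters (say $n'$) in terms of the others and restricts $t$ to an interval of length $O(H^{-1/2})$ about $t \asymp H^{1/2}$.

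Summing the resulting per-slice bound $O(X^{3/4}/H^3)$ over primitive pairs with height exactly $H$ (of which there are $O(H)$) gives
\[
\sum_{H \geq 1} O(H) \cdot O(X^{3/4}/H^3) \;=\; O(X^{3/4}) \cdot \sum_{H \geq 1} O(1/H^2) \;=\; O(X^{3/4}),
\]
and the boundary contributions, controlled using the projections of the region onto coordinate subspaces in $V_\R$, contribute an additional factor of $O(X^\epsilon)$ to yield the stated bound. The main obstacle is the uniform slice-volume bound $\Vol_3 \ll X^{3/4}$: because the region $\FF v \cap \{|\Disc|<X\}$ is non-compact (extending to infinity in the $(c, d)$-direction as $t \to \infty$), Brunn--Minkowski-type convexity arguments do not apply directly, and one must decompose along the $(t, \lambda)$-parameters of the Iwasawa-type coordinates and verify the estimate fiber-by-fiber, exploiting that the $H^{-1/2}$ concentration of compatible $t$-values compensates for the cusp volume.
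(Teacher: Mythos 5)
Your plan parametrizes the reducible forms by their rational root $(p:q)$ and tries to control the count on each hyperplane section $L_{p,q} \cap \RR_X(v)$ via Davenport's lemma, but the crucial per-slice bound $O(X^{3/4}/H^3)$ with $H = \max(|p|,q)$ is asserted rather than established; indeed you flag the uniform slice-volume estimate yourself as ``the main obstacle'' and only outline a fiberwise decomposition in the $(t,\lambda)$-coordinates without carrying it out. The supporting claims are also not secure: the restriction $|p/q| \ll t^2$ and the concentration of $t$ in an interval of length $O(H^{-1/2})$ about $H^{1/2}$ rely on the roots of $v$ lying in a fixed compact subset of $\P^1$, but $B$ is a ball cut only by $|\Disc(v)| \ge 1$, so $v \in B$ can perfectly well have a vanishing $x^3$-coefficient (a root at infinity), which breaks the M\"obius-transform argument you sketch. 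The Davenport boundary terms, summed over all primitive $(p,q)$, are likewise dismissed as ``controlled by projections'' with no estimate showing they sum to $O(X^{3/4+\epsilon})$. The heart of the bound is therefore missing.

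The paper's proof is much more elementary and avoids all of this machinery. Writing $\RR_X(v)$ as $N'A'K\Lambda v$ with $v\in B$ and $0<\lambda<X^{1/4}$, one reads off that $a = O(X^{1/4})$, $ab, ac, ad = O(X^{1/2})$, and $abc, abd = O(X^{3/4})$. The case $d=0$ is disposed of immediately from $a$, $ab$, $abc = O(X^{3/4})$. When $a,d\neq 0$, those same bounds show there are $O(X^{3/4+\epsilon})$ possible triples $(a,b,d)$; and for each such triple, any primitive rational linear factor $rx+sy$ of $f$ must satisfy $r \mid a$ and $s \mid d$, giving only $O(X^{\epsilon})$ choices of $(r,s)$, each of which then determines $c$ uniquely via $f(-s,r)=0$. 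No sublattice covolume, no slice-volume computation, and no equidistribution of $t$ enter; the divisibility constraint $r\mid a$, $s\mid d$ is the essential device, and it is entirely absent from your proposal. If you want to rescue your route, you would need to actually prove the fiberwise slice-volume bound, including uniformity over $v\in B$ with roots near $\infty$; at present the argument does not go through.
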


\begin{proof}
  For an element $f(x,y)=ax^3+bx^2y+cxy^2+dy^3\in \RR_X(v)$, we have
  $f\in N'A'K\Lambda v$ where $0<\lambda<X^{1/4}$, since
  $\Disc(\lambda\cdot v)=\lambda^4\Disc(v)$. It follows that
  $a=O(\lambda/t^3)=O(X^{1/4})$, $ab=O(\lambda^2/t^4)=O(X^{1/2})$,
  $ac=O(\lambda^2/t^2)=O(X^{1/2})$, $ad=O(\lambda^2)=O(X^{1/2})$, $abc=O(\lambda^3/t^3)=O(X^{3/4})$, and
  $abd=O(\lambda^3/t)=O(X^{3/4})$.  In particular, the latter estimates clearly imply
  that the total number of forms $f\in \RR_X(v)$ with $a\neq 0$ and
  $d=0$ is $O(X^{3/4+\epsilon})$.

  Let us now assume $a\neq 0$ and $d\neq 0$.  Then the above estimates show
  that the total number of possibilities for the triple $(a,b,d)$ is
  $O(X^{3/4+\epsilon})$. Suppose the values $a,b,d$ ($d\neq 0$) are
  now fixed, and consider the possible number of values of $c$ such
  that the resulting form $f(x,y)$ is reducible.  For $f(x,y)$ to be
  reducible, it must have some linear factor $rx+sy$, where $r,s\in\Z$
  are relatively prime.  Then $r$ must be a factor of $a$, while $s$
  must be a factor of $d$; they are thus both determined up to
  $O(X^\epsilon)$ possibilities.  Once $r$ and $s$ are determined,
  computing $f(-s,r)$ and setting it equal to zero then uniquely
  determines $c$ (if it is an integer at all) in terms of $a,b,d,r,s$.
  Thus the total number of reducible forms $f\in \RR_X(v)$ with $a\neq
  0$ is
  $O(X^{3/4+\epsilon})$, as desired.
\end{proof}

We shall need the following lemma, which also follows from \cite[Lemma
2]{Davenport}, bounding the number of integral points in $\RR_X(v)$
that have stabilizer $C_3$ in $\GL_2(\Z)$, when $v$ has positive
discriminant. No integral binary cubic form having negative
discriminant has stabilizer $C_3$ in $\GL_2(\Z)$.
\begin{lemma}\label{reducible2}
  Let $v\in V_\R$ be any point of positive discriminant. Then the number
  of points in $V_\Z\cap\RR_X(v)$ having stabilizer $C_3$ in
  $\GL_2(\Z)$ is $O(X^{3/4+\epsilon})$, where the implied constant is
  independent of $v$.
\end{lemma}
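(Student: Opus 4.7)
The plan is to exploit the very rigid structure of the $C_3$-fixed locus in $V$. By the classification of torsion in $\SL_2(\Z)$, every order-$3$ element of $\GL_2(\Z)$ is conjugate to a fixed choice, say $\gamma_0:=\bigl(\begin{smallmatrix}0&1\\-1&-1\end{smallmatrix}\bigr)$ (this amounts to the observation that $\Z^2$, viewed as a module over $\Z[\gamma]=\Z[x]/(x^2+x+1)$, is necessarily free of rank one over this PID). Consequently, every $\GL_2(\Z)$-orbit of $C_3$-points contains at least one representative fixed by $\gamma_0$ itself, and it suffices to bound the number of integral $\gamma_0$-fixed forms of absolute discriminant less than $X$.

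Next I would compute the $\gamma_0$-fixed subspace explicitly. Since $\det\gamma_0=1$, the twisted action is $(\gamma_0\cdot f)(x,y)=f(-y,x-y)$. Imposing $\gamma_0\cdot f=f$ on $f=ax^3+bx^2y+cxy^2+dy^3$ and equating coefficients yields the two independent linear conditions $d=a$ and $c=-3a-b$. The fixed subspace $V^{\gamma_0}$ is thus $2$-dimensional, parametrized by $(a,b)\in\R^2$ as
\[f_{a,b}:=ax^3+bx^2y-(3a+b)xy^2+ay^3,\]
and a direct evaluation of the discriminant polynomial specializes to the clean identity $\Disc(f_{a,b})=(9a^2+3ab+b^2)^2$.

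Finally, I count. The quadratic form $Q(a,b):=9a^2+3ab+b^2$ is positive definite (its discriminant is $-27<0$), so the number of $(a,b)\in\Z^2$ with $Q(a,b)\le X^{1/2}$---equivalently $|\Disc(f_{a,b})|<X$---is $O(X^{1/2})$ by standard lattice-point counting. This bounds the number of $\GL_2(\Z)$-orbits of $C_3$-points of absolute discriminant less than $X$ by $O(X^{1/2})$. Since $v$ has positive discriminant, the stabilizer of $v$ in $\GL_2(\R)$ has order $n_1=6$, and each $\GL_2(\Z)$-orbit with stabilizer $C_3$ appears in the multiset $\RR_X(v)$ with multiplicity $n_1/|C_3|=2$; the total number of $C_3$-points in $\RR_X(v)$ is therefore $O(X^{1/2})$, comfortably better than the required $O(X^{3/4+\epsilon})$. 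The only step whose verification requires more than routine linear algebra is the conjugacy uniqueness of order-$3$ elements in $\GL_2(\Z)$, but this is classical.
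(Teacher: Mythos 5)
Your proof is correct, and it actually yields the sharper bound $O(X^{1/2})$ (matching, as the paper notes in the remark following this lemma, the true order of growth $cX^{1/2}$ established in the reference \cite{cyclic}). The route is genuinely different from the paper's. The paper's argument exploits the freedom to choose $v$: it fixes $v=x^3-3xy^2$ so that the multiset $\FF v$ is characterized by Hessian reduction inequalities $|bc-9ad|\le b^2-3ac\le c^2-3bd$, observes that any order-$3$ stabilizer of $f$ also stabilizes the Hessian $H_f$, and then uses the fact that $x^2+xy+y^2$ is the unique reduced quadratic with an order-$3$ stabilizer to force the equalities $b^2-3ac=bc-9ad=c^2-3bd$; for fixed $(a,b,d)$ this pins down $c$, and the $O(X^{3/4+\epsilon})$ bound on triples $(a,b,d)$ from Lemma~\ref{reducible} finishes the job. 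Your argument instead works at the level of $\GL_2(\Z)$-orbits: you use the $\Z[\zeta_3]$-module structure on $\Z^2$ to show every order-$3$ element of $\GL_2(\Z)$ is conjugate to a fixed $\gamma_0$, explicitly parametrize the $2$-dimensional $\gamma_0$-fixed subspace by $(a,b)$, verify the clean identity $\Disc(f_{a,b})=(9a^2+3ab+b^2)^2$, and conclude by a lattice-point count in the region $Q(a,b)<X^{1/2}$. The cost of your approach is the (classical, but nontrivial) input on conjugacy of torsion in $\GL_2(\Z)$; the payoff is that it avoids the choice of a special $v$, is independent of the surrounding reduction theory (Hessian and $\FF$-description), and delivers the asymptotically correct exponent $1/2$ rather than $3/4$. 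The final multiplicity step ($n_1/|C_3|=2$) is consistent with the paper's discussion in Section~5.1. Both proofs are valid; yours is quantitatively stronger.
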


\begin{proof}
  The number of integral points in $\RR_X(v)$ having stabilizer $C_3$
  in $\GL_2(\Z)$ is equal to the number of isomorphism classes of
  cubic rings having automorphism group $C_3$ and discriminant less
  than $X$.  This number is thus independent of $v$, and so it
  suffices to prove the lemma for any single $v$.

  We choose $v$ to be the binary cubic form $x^3 -3xy^2$.  The
  reason for this choice is as follows.  Every binary cubic form
  $f(x,y)=ax^3+bx^2y+cxy^2+dy^3$ has a naturally associated binary
  quadratic form, namely, the ``Hessian covariant''
  $H_f(x,y)=(b^2-3ac)x^2+(bc-9ad)xy+(c^2-3bd)y^2$. It is easy to see
  that if a binary cubic form $f$ is acted upon by an element
  $\gamma\in\SL_2(\Z)$, then $H_f$ is also acted upon by the same
  transformation.  Now $H_v(x,y)=9(x^2+y^2)$, and so $\FF H_v$ consists
  of the usual reduced (positive-definite) binary quadratic forms
  $A_1x^2+A_2xy+A_3y^2$, where $|A_2|\leq A_1\leq A_3$.  Thus $\FF v$
  consists of binary cubic forms satisfying $|bc-9ad|\leq
  b^2-3ac\leq c^2-3bd$.

  Now if a binary cubic form $f$ in $\FF v$ has a nontrivial
  stabilizing element $\gamma$ of order 3 in $\SL_2(\Z)$, then
  $\gamma$ will also stabilize its Hessian $H_f$.  But the only
  reduced binary quadratic form, up to multiplication by scalars,
  having a nontrivial stabilizing element of order 3 is $x^2+xy+y^2$.
  Therefore, any such $C_3$-type binary cubic form
  $f(x,y)=ax^3+bx^2y+cxy^2+dy^3$ in $\FF v$ must satisfy
  \[b^2-3ac=bc-9ad=c^2-3bd.\]
  From this we see that, if $a,b,d$ are fixed, then there is at most
  one solution for $c$.  As in the proof of Lemma~\ref{reducible}, the
  total number of possibilities for the triple $(a,b,d)$ in $\FF v$ is
  $O(X^{3/4+\epsilon})$, and the lemma follows.
\end{proof}

\noindent
In fact, by refining the proof of Lemma~\ref{reducible2},
it can be shown that the number
of $C_3$-points in $\RR_X(v)$ of discriminant less than $X$ is
asymptotic to $cX^{1/2}$, where $c=\pi\sqrt{3}/18$; see~\cite{cyclic}.

Thus, as far as Theorem~\ref{bcfcount} is concerned, the $C_3$-points
in $V_\Z$ are negligible in number and are absorbed in the error term.

\subsection{Averaging}

Let $dv$ denote the usual Euclidean measure on $V_\R$ (normalized so
that $V_\Z$ has co-volume 1) and let $dg=t^{-2}dn\,d^\times
t\,dk\,d^\times\lambda$ be the Haar measure of $\GL_2(\R)$ obtained
from its Iwasawa decomposition (see the beginning of Section 5.1),
where $dk$ is normalized to have measure $1$ on $\text{SO}_2(\R)$.  We
start with a proposition implying that $|\Disc(v)|^{-1}dv$ is a
$\GL_2(\R)$-invariant measure on $V_\R$.
\begin{proposition}\label{volumes}
For $i=0$ or $1$, let $f\in C_0(V_\R^{(i)})$ and let $v_i$ be any
element of $V_\R^{(i)}$.  Then
\[ \int_{g\in \GL_2(\R)} f(g\cdot v_i)\,dg
\,=\,\frac1{2\pi}\int_{v\in\GL_2(\R)\cdot v_i}f(v)\,|\Disc(v)|^{-1}dv\,=\,
\frac{n_i}{2\pi}\int_{v\in V_\R^{(i)}}f(v)\,|\Disc(v)|^{-1}dv . \]
\end{proposition}
The first equality in Proposition~\ref{volumes} is simply a Jacobian
calculation for the change of variable for the map which sends
$g\in\GL_2(\R)$ to $v=g\cdot v_i$ in $V_\R$, where the coordinates for
$g$ are $(k,t,n,\lambda)$, while for $v$ they are the usual Euclidean
coordinates $(a,b,c,d)$ with $dv = da\, db\, dc\, dd$. The second
follows from the fact that the multiset $\GL_2(\R)\cdot v_i$ is an $n_i$-fold cover
of the set $V_\R^{(i)}$.

For a constant $C\geq 1$, let $B=B(C)=\{w=(a,b,c,d)\in V_\R : 3a^2+b^2+c^2+3d^2\leq
C,\;|\Disc(w)|\geq 1\}$; then one easily checks that $B$ is $K$-invariant.
Let $V_\Z^\irr$ denote the subset of irreducible points of $V_\Z$.  It
then follows from the discussion in Section 5.1 that
\begin{equation}
N(V_\Z^{(i)};X) = \frac{\int_{v\in B\cap V_\R^{(i)}}
\#\{x\in \FF v\cap V_\Z^\irr: |\Disc(x)|<X\}\;
|\Disc(v)|^{-1} {d}v}
{n_i\cdot \int_{v\in B\cap V_\R^{(i)}} \:|\Disc(v)|^{-1} dv},
\end{equation}
where points $x\in \FF v\cap V_\Z^\irr$ whose stabilizer in
$\GL_2(\Z)$ is $C_3$ are counted with multiplicity $1/3$.
The denominator of the latter expression is, by construction, a finite
absolute constant
greater than zero.  We have chosen
the measure $|\Disc(v)|^{-1}\,dv$ because it is a
$\GL_2(\R)$-invariant measure.

More generally, for any $\GL_2(\Z)$-invariant subset $S\subset
V_\Z^{(i)}$, let $N(S;X)$ denote the number of irreducible
$\GL_2(\Z)$-orbits on $S$ having discriminant less than $X$.  Let $S^\irr$ denote the subset of irreducible points of $S$.  Then $N(S;X)$ can be expressed as
\begin{equation}\label{eqnsx}
N(S;X) = \frac{\int_{v\in B\cap V_\R^{(i)}}
\#\{x\in \FF v\cap S^\irr: |\Disc(x)|<X\}\;
|\Disc(v)|^{-1} dv}
{n_i\cdot \int_{v\in B\cap V_\R^{(i)}} \:|\Disc(v)|^{-1} dv},
\end{equation}
where, as before, points $x\in \FF v\cap S^\irr$ whose stabilizer in $\GL_2(\Z)$ is $C_3$ are counted with multiplicity $1/3$.
We shall use this as a definition of $N(S;X)$ for any $S\subset
V_\Z$, even if $S$ is not $\GL_2(\Z)$-invariant.  Note that for
disjoint $S_1,S_2\subset V_\Z$, we have $N(S_1\cup S_2;X)=N(S_1;X)+N(S_2;X)$.

Fix $v_i\in V_\R^{(i)}$ and maximal subsets
$H^{(i)}\subset\GL_2(\R)$ such that $H^{(i)}\cdot v_i=B\cap
V_\R^{(i)}$. Thus, the multiset $H^{(i)}\cdot v_i$ is an $n_i$-fold cover of $B\cap V_\R^{(i)}$. The numerator of the right hand side of
Equation (\ref{eqnsx}) is equal to
\begin{equation}\label{eqavging}
\sum_{\substack{{x\in
      S^{\irr}}\\[.02in]{|\Disc(x)|<X}}}\int_{v\in B\cap V_\R^{(i)}} \#\{g \in \FF :
x=gv\} |\Disc(v)|^{-1}dv=
\frac{2\pi}{n_i}\!\!\!\!\!\!\!\sum_{\substack{{x\in S^{\irr}}\\[.02in]{|\Disc(x)|<X}}}\int_{h\in H^{(i)}} \#\{g \in \FF :x=ghv_i\}dh,
\end{equation}
where the equality in (\ref{eqavging}) follows from 
Proposition~\ref{volumes}. 
The right hand side of (\ref{eqavging}) is equal to
\begin{equation}
\frac{2\pi}{n_i}\!\!\!\!\!\!\!\sum_{\substack{{x\in S^{\irr}}\\[.02in]{|\Disc(x)|<X}}}\int_{g\in\FF} \#\{h \in H^{(i)} :x=ghv_i\}dg=\frac{2\pi}{n_i}\int_{g\in\FF}\#\{x\in S^\irr\cap gH^{(i)}v_i:|\Disc(x)|<X\}\,dg.
\end{equation}
Therefore, we have
\begin{eqnarray}\label{impeqsec5}
\!\!\!N(S;X)&\!\!=\!\!&  \frac1{M_i}\int_{g\in\FF}\!\!\!
\#\{x\in S^\irr\cap gB\cap V_\R^{(i)}:|\Disc(x)|<X\}\,dg\\[.075in]
&\!\!=\!\!&  \frac1{M_i}\int_{g\in N'(a)A'\Lambda K}\!\!\!\!\!
\#\{x\in S^\irr\cap n 
\bigl(\begin{smallmatrix}t^{-1}& {}\\
  {}& t\end{smallmatrix}\bigr) 
\lambda k B\cap V_\R^{(i)}:|\Disc(x)|<X\}t^{-2}
dn\, d^\times t\,d^\times \lambda\, dk.
\end{eqnarray}
where 
\begin{equation}\label{midef}
M_i = \frac{n_i}{2\pi}\cdot \int_{v\in B\cap V_\R^{(i)}} \:|\Disc(v)|^{-1} dv.
\end{equation}
Let us write $B(n,t,\lambda,X) = n 
\bigl(\begin{smallmatrix}t^{-1}& {}\\
  {}& t\end{smallmatrix}\bigr) 
\lambda B\cap\{v\in V_\R^{(i)}:|\Disc(v)|<X\}$.
As $KB=B$ and $\int_K dk =1 $, we have
\begin{equation}\label{avg}
N(S;X) = \frac1{M_i}\int_{g\in N'(a)A'\Lambda}
\#\{x\in S^\irr\cap B(n,t,\lambda,X)\}t^{-2}
dn\, d^\times t\,d^\times \lambda\,.
\end{equation}

To estimate the number of lattice points in $B(n,t,\lambda,X)$, we
have the following elementary proposition from the
geometry-of-numbers.  The form we state is essentially due to
Davenport~\cite{Davenport1}.
To state the proposition, we require the following simple definitions.
A multiset $\mathcal R\subset\R^n$ is said to be {\it measurable} if
$\mathcal R_k$ is measurable for all $k$, where $\mathcal R_k$ denotes
the set of those points in $\mathcal R$ having a fixed multiplicity
$k$.  Given a measurable multiset $\mathcal R \subset\R^n$, we define
its volume in the natural way, that is, $\Vol(\mathcal R)=\sum_k
k\cdot\Vol(\mathcal R_k)$, where $\Vol(\mathcal R_k)$ denotes the
usual Euclidean volume of $\mathcal R_k$.

\begin{proposition}\label{genbound}
  Let $\mathcal R$ be a bounded, semi-algebraic multiset in $\R^n$
  having maximum multiplicity $m$, and which is defined by at most $k$
  polynomial inequalities each having degree at most $\ell$.  Let $\RR'$
  denote the image of $\RR$ under any $($upper or lower$)$ triangular,
  unipotent transformation of $\R^n$.  Then the number of integer
  lattice points $($counted with multiplicity$)$ contained in the
  region $\mathcal R'$ is
\[\Vol(\mathcal R)+ O(\max\{\Vol(\bar{\mathcal R}),1\}),\]
where $\Vol(\bar{\mathcal R})$ denotes the greatest $d$-dimensional
volume of any projection of $\mathcal R$ onto a coordinate subspace
obtained by equating $n-d$ coordinates to zero, where
$d$ takes all values from
$1$ to $n-1$.  The implied constant in the second summand depends
only on $n$, $m$, $k$, and $\ell$.
\end{proposition}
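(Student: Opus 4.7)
The plan is to induct on $n$, proving the slightly stronger statement that the bound holds uniformly for translates $\Z^n + w$ of the integer lattice (the displayed case being $w = 0$). The base case $n = 1$ is routine: the only unipotent transformation of $\R$ is the identity, and a bounded semi-algebraic multiset with the stated parameters decomposes into $O_{k,\ell,m}(1)$ intervals at each multiplicity level, each contributing its length with error $O(1)$; this totals $\Vol(\RR) + O(1)$.

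For the inductive step, I would first reduce to the upper triangular case by reversing the order of coordinates (this reversal is not itself unipotent, but it preserves semi-algebraicity, multiplicities, volumes, and the list of coordinate-subspace projections). Writing
\[ u = \begin{pmatrix} u' & v \\ 0 & 1 \end{pmatrix} \]
with $u'$ an upper triangular unipotent $(n-1) \times (n-1)$ matrix and $v \in \R^{n-1}$, one sees that $u$ fixes the last coordinate, so the slice of $\RR' = u\RR$ at $x_n = t$ equals $u'(\RR_t) + tv$, where $\RR_t := \RR \cap \{x_n = t\}$. Counting $(\Z^{n-1} + w')$-points in this slice is equivalent to counting $(\Z^{n-1} + w' - tv)$-points of $u'(\RR_t)$, which by the strengthened induction hypothesis equals $\Vol(\RR_t) + O(\max\{\Vol(\bar\RR_t), 1\})$. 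Summing over integer $k$ with $t = k - w_n$, the main term $\sum_k \Vol(\RR_{k-w_n})$ is a Riemann sum for $\int \Vol(\RR_t)\,dt = \Vol(\RR)$, with error controlled by $\max_t \Vol(\RR_t)$ (itself a coordinate projection of $\RR$); the aggregated slice errors are similarly bounded by coordinate projections of $\RR$, since any projection of $\RR_t$ is dominated by the corresponding projection of $\RR$. Both contributions are absorbed into $O(\max\{\Vol(\bar\RR), 1\})$.

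The hard part will be accommodating the real translation $tv$ that appears in each slice: $tv$ is not unipotent, so a naive induction fails. The fix is to strengthen the inductive hypothesis from the outset to allow arbitrary lattice translates $\Z^n + w$, which costs nothing in the base case and absorbs each $tv$ into a shifted lattice at every level. A further detail is that the Riemann-sum comparison requires $t \mapsto \Vol(\RR_t)$ to be regular enough; this is automatic from semi-algebraicity, since the function is piecewise algebraic with a bounded number of pieces (depending only on $k$, $\ell$, and $m$), so its Riemann sums approximate the integral to within the maximum slice volume.
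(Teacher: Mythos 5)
The paper gives no proof of Proposition~\ref{genbound}; it simply cites Davenport and asserts that his argument ``adapts without essential change'' to multisets and to unipotent shears. Your inductive slicing argument, strengthened to count points of arbitrary lattice translates $\Z^n+w$, is exactly the adaptation the paper has in mind: since the shear $u$ fixes the last coordinate, each slice $\{x_n=t\}$ of $u\RR$ is the image of the corresponding slice of $\RR$ under $u'$ followed by a translation by $tv$, and that translation is absorbed into the shifted lattice $\Z^{n-1}+w'-tv$ at the next level of the induction. The one place that needs more care is the Riemann-sum comparison between $\sum_k \Vol(\RR\cap\{x_n=k+w_n\})$ and $\Vol(\RR)$. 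You justify it by saying the slice-volume function is ``piecewise algebraic with a bounded number of pieces''; that is not the right phrase, since slice volumes of semi-algebraic sets are typically \emph{not} algebraic functions of the slicing parameter (they can involve $\arctan$, $\log$, and so on). What is true, and what your argument actually needs, is that the slice-volume function is piecewise monotone with the number of pieces bounded in terms of $n$, $k$, $\ell$, $m$; this follows from a cylindrical algebraic decomposition adapted to the $x_n$-axis (or, more softly, from the monotonicity and uniform-finiteness theorems of o-minimality). With that amendment the Riemann-sum error is controlled by the total variation, which is $O(\sup_t\Vol(\RR\cap\{x_n=t\}))\le O(m\,\Vol(\bar{\RR}))$. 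A parallel remark applies to your treatment of the aggregated slice errors: bounding each $\Vol(\bar{\RR\cap\{x_n=t\}})$ pointwise by $\Vol(\bar\RR)$ and then summing over $t$ would be too lossy; what one should do is apply the same Riemann-sum/Fubini step to each lower-dimensional projection, using the identity that integrating a $d$-dimensional coordinate projection of the slices over $t$ produces a $(d+1)$-dimensional coordinate projection of $\RR$. With these clarifications your argument is complete, and it is, in spirit, the Davenport slicing induction the paper intends.
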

Although Davenport states the above lemma only for compact
semi-algebraic sets $\mathcal R\subset\R^n$, his proof adapts without
essential change to the more general case of a bounded semi-algebraic
multiset $\mathcal R\subset\R^n$, with the same estimate applying also to
any image $\mathcal R'$ of $\mathcal R$ under a unipotent triangular
transformation.


  We now have the following lemma on the number of lattice
  points in $B(n,t,\lambda,X)$ with $a\neq 0$:

\begin{lemma}\label{bntlx}
The number of lattice points $(a,b,c,d)$ in $B(n,t,\lambda,X)$ with $a\neq0$ is
$$\left\{
\begin{array}{cl}
0 & \mbox{{\em if} $\frac{C\lambda}{t^3}<1$};\\[.1in]
\Vol(B(n,t,\lambda,X)) + O(\max\{{C^3t^3}
{\lambda^3},1\})&\mbox{{\em otherwise.}}
\end{array}\right.$$
\end{lemma}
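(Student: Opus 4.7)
The plan is to apply Proposition~\ref{genbound} directly to $B(n,t,\lambda,X)$, after recognizing it as the image of a bounded semi-algebraic region under a triangular unipotent transformation. First I would work out how $\GL_2(\R)$ acts on the coordinates $(a,b,c,d)$ via the twisted action: the diagonal matrix $\mathrm{diag}(t^{-1},t)$ sends $(a,b,c,d)\mapsto (at^{-3},bt^{-1},ct,dt^{3})$; the scalar $\lambda$ acts by $v\mapsto\lambda v$; and the lower-triangular unipotent $n$ sends $(a,b,c,d)\mapsto(a,\,b+3an,\,c+2bn+3an^2,\,d+cn+bn^2+an^3)$, which is unipotent and triangular in the ordering $(a,b,c,d)$ and in particular preserves the $a$-coordinate. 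Setting $\mathcal R=\mathrm{diag}(t^{-1},t)\cdot\lambda B\cap\{|\Disc|<X\}$, the region $B(n,t,\lambda,X)=n\cdot\mathcal R$ is exactly the image of the bounded semi-algebraic set $\mathcal R$ under a triangular unipotent map, which matches the hypotheses of Proposition~\ref{genbound}.

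For the first case, the containment $B\subset\{3a^2+b^2+c^2+3d^2\le C\}$ implies that every element of $B(n,t,\lambda,X)$ has $|a|\le\sqrt{C/3}\,\lambda t^{-3}$. Under the hypothesis $C\lambda/t^3<1$ this gives $|a|<1/\sqrt{3C}<1$, so no integer value $a\ne 0$ is possible.

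For the second case, I would directly invoke Proposition~\ref{genbound}. The coordinate ranges of $\mathcal R$ are $O(\sqrt C \lambda t^{-3})$, $O(\sqrt C\lambda t^{-1})$, $O(\sqrt C\lambda t)$, $O(\sqrt C\lambda t^3)$, so the largest 3-dimensional coordinate-subspace projection (obtained by dropping $a$) has volume $O(C^{3/2}\lambda^3 t^3)$. Using the hypothesis $\lambda\ge t^3/C$ together with the lower bound $t\ge \sqrt[4]3/\sqrt2$ from $A'$, one verifies that the 2D and 1D projection volumes $O(C\lambda^2 t^4)$ and $O(\sqrt C\lambda t^3)$ are each dominated by $C^3\lambda^3 t^3+1$. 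Proposition~\ref{genbound} then yields
\[
\#\bigl\{(a,b,c,d)\in\Z^4\cap B(n,t,\lambda,X)\bigr\}=\Vol(B(n,t,\lambda,X))+O(\max\{C^3 t^3\lambda^3,1\}).
\]
Finally, subtracting the integer points with $a=0$, which lie in the 3-dimensional slice $\{a=0\}\cap B(n,t,\lambda,X)$ and thus number at most $O(C^{3/2}\lambda^3 t^3)$, preserves the same error bound for the count with $a\ne 0$.

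The main obstacle is the bookkeeping required to verify that $C^3 t^3\lambda^3$ uniformly dominates all lower-dimensional projection volumes over the full admissible range of $(t,\lambda)$: this hinges on using $C\lambda/t^3\ge 1$ to trade powers of $t$ for powers of $\lambda$, combined with the constant lower bound on $t$ inherent in $A'$.
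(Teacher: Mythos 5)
Your proposal is correct and follows the paper's approach: both reduce to Proposition~\ref{genbound} after observing that $B(n,t,\lambda,X)$ is the image of the bounded semi-algebraic set $a(t)\lambda B\cap\{|\Disc|<X\}$ under the lower-triangular unipotent map induced by $n$, and both bound the coordinate-subspace projection volumes by $O(C^3t^3\lambda^3+1)$ using the lower bound on $t$ from $A'$ together with $C\lambda\geq t^3$. (One small point: when $t<1$ the drop-$d$ projection actually exceeds the drop-$a$ projection by a factor $t^{-6}$, but since $t$ is bounded below this ratio is bounded, so your stated estimate $O(C^{3/2}\lambda^3t^3)$ for the maximal $3$-dimensional projection still holds.)
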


\begin{proof}
  From our description of $B$, it follows that the $x^3$-coefficient
  of any binary cubic form in $B$ is bounded by $C$. Thus, if ${C {\lambda/
      t^3}<1}$, then $a=0$ is the only possibility for an integral
  binary cubic form $ax^3+bx^2y+cy^2+dy^3$ in $B(n,t,\lambda,X)$.
   If ${C {\lambda/
      t^3}\geq1}$, then $\lambda$ and $t$ are positive numbers bounded
  from below by $(\sqrt[4]{3}/\sqrt2)^3/C$ and $\sqrt[4]{3}/\sqrt2$ respectively. In this
  case, one sees that the projection of $B(n,t,\lambda,X)$ onto $a=0$
  has volume $O(C^3t^3\lambda^3)$, while all other projections are
  also bounded by a constant times this. The lemma now follows from
  Proposition~\ref{genbound}.
\end{proof}

In~(\ref{avg}), observe that the integrand will be nonzero only if $
t^3 \leq C\lambda$ and $\lambda\leq X^{1/4}$, since $B$ consists only
of points having discriminant at least 1.  Thus we may write, up to an
error of $O(X^{3/4+\epsilon})$ due to Lemma~\ref{reducible}, that
\begin{equation}\label{bigint}
N(V_\R^{(i)};X) 
=\frac1{M_i}
\int_{\lambda=(\sqrt[4]{3}/\sqrt2)^3/C}^{X^{1/4}}
\int_{t=\sqrt[4]{3}/\sqrt2}^{C^{1/3}\lambda^{1/3}}\!\! \int_{N'(t)}
(\Vol(B(n,t,\lambda,X)) +
O(\max\{{C^3t^3}{\lambda^3},1\})) t^{-2}
dn \,d^\times t \,d^\times \lambda.
\end{equation}
The integral of the first summand is
\begin{equation}\label{ugh}
\frac{1}{2\pi M_i}\int_{v\in B\cap V_\R^{(i)}}
\!\!\!\Vol(\RR_X(v))|\Disc(v)|^{-1}dv -  \!\frac1{M_i}
\int_{\lambda=(\sqrt[4]{3}/\sqrt2)^3/C}^{X^{1/4}}
\int^\infty_{C^{1/3}\lambda^{1/3}}\!\int_{N'(t)}
\!\!\!\Vol(B(n,t,\lambda,X)t^{-2} dn d^\times t d^\times \lambda.
\end{equation}
Since $\Vol(\RR_X(v))$ does not
depend on the choice of $v\in V_\R^{(i)}$ (by Proposition~\ref{volumes}),
the first term of (\ref{ugh}) is simply $\Vol(\RR_X(v))/n_i$;
meanwhile, the integral of the second term is easily
evaluated to be $O(C^{10/3}X^{5/6}/M_i(C))$, since
$\Vol(B(n,t,\lambda,X))\ll C^4\lambda^4$.
On the other hand, since $C^3t^3\lambda^3\gg1$ one immediately computes the
integral of the second summand in (\ref{bigint}) to be
$O(C^{10/3}X^{5/6}/M_i(C))$.  We thus obtain, for any $v\in V_\R^{(i)}$,~that
\begin{equation}
N(V_\Z^{(i)};X) = \frac1{n_i}\cdot\Vol(\RR_X(v)) + O(C^{10/3}X^{5/6}/M_i(C)).
\end{equation}

To prove Theorem~\ref{bcfcount}, it remains to compute the fundamental
volume $\Vol(\RR_X(v))$ for $v\in V_\R^{(i)}$.

\subsection{Computation of the fundamental volume}\label{volcomp}



Let $\GL_2^{\pm1}(\R)$ denote the subgroup of elements in
$\GL_2(\R)$ having determinant $\pm1$.
It is known~\cite{reference} (or readily computed using Gauss's
explicit fundamental domain for $\SL_2(\Z)\backslash\SL_2(\R)$\,) that
$\Vol(\GL^{\pm1}_2(\Z)\backslash\GL^{\pm1}_2(\R))=\zeta(2)/\pi$, where this volume
is computed with respect to the measure $dh$ obtained from the Iwasawa
decomposition of $\GL^{\pm1}_2(\R)$.
Then
we obtain using
Proposition~\ref{volumes} that
\[\frac1{n_i}\cdot\Vol(\RR_X(v_i)) = \frac{2\pi}{n_i} \int_{0}^{X^{1/4}}\lambda^4d^\times\lambda
\int_{\GL_2(\Z)\backslash\GL_2^{\pm1}(\R)}dh =
\frac{2\pi}{n_i}\cdot \frac{X}{4}\cdot\frac{\zeta(2)}{\pi}
= \frac{\pi^2}{12n_i}X,\]
This proves Theorem~\ref{bcfcount}, and thus the main term of Theorem~\ref{bcfcount1}.
Together with the Delone-Faddeev correspondence, this also proves the main term
of Theorem~\ref{ringwithres}.



\subsection{Congruence conditions}

We may prove a version of Theorem~\ref{bcfcount} for a set in
$V_\Z^{(i)}$ defined by a finite number of congruence conditions.

\begin{theorem}\label{cong}
Suppose $S$ is a subset of $V^{(i)}_\Z$ defined by finitely many
congruence conditions modulo prime powers. Then we have
\begin{equation}\label{ramanujan}
\lim_{X\rightarrow\infty}\frac{N(S\cap V_\Z^{(i)};X)}{X}
  = \frac{\pi^2}{12n_i}
  \prod_{p} \mu_p(S),
\end{equation}
where $\mu_p(S)$ denotes the $p$-adic density of $S$ in $V_\Z$, and
$n_i=6$ or $2$ for $i=0$ or $1$, respectively.
\end{theorem}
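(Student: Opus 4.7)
The plan is to mimic the argument for Theorem~\ref{bcfcount} almost verbatim, with the lattice $V_\Z$ replaced by the subset $S$.

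First, I would fix a positive integer $M$ such that membership in $S$ is determined by the residue of $(a,b,c,d)$ modulo $M$; this exists because $S$ is defined by finitely many congruence conditions. Decompose $S$ into its residue classes modulo $M$, say $S=\bigsqcup_{j=1}^{r}(v_j+M V_\Z)$, where $r=M^4\,\mu_M(S)$ and $\mu_M(S):=\#(S\bmod M)/M^4$. By the Chinese Remainder Theorem, $\mu_M(S)=\prod_{p\mid M}\mu_p(S)$, and since $\mu_p(S)=1$ for all primes $p\nmid M$, we have $\mu_M(S)=\prod_p\mu_p(S)$.

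Second, I would establish a congruence version of Lemma~\ref{bntlx}: for each coset $v_j+M V_\Z$, the number of points in $(v_j+M V_\Z)\cap B(n,t,\lambda,X)$ with $a\neq 0$ equals
\[
\frac{\Vol(B(n,t,\lambda,X))}{M^4}+O_{M}\!\bigl(\max\{C^3t^3\lambda^3,1\}\bigr),
\]
whenever $C\lambda/t^3\geq 1$. This follows by applying Proposition~\ref{genbound} to the rescaled set $M^{-1}\bigl(B(n,t,\lambda,X)-v_j\bigr)$: the main volume scales as $M^{-4}$, while the largest coordinate-subspace projection scales at worst as $M^{-3}$, with implied constants depending only on the fixed number and degree of the defining polynomial inequalities. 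Summing over the $r$ cosets in $S$ yields
\[
\#\bigl\{x\in S\cap B(n,t,\lambda,X)\,:\,a(x)\neq 0\bigr\}=\mu_M(S)\cdot\Vol(B(n,t,\lambda,X))+O_{M}\!\bigl(\max\{C^3t^3\lambda^3,1\}\bigr).
\]

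Third, I would substitute this estimate into the averaging formula~(\ref{avg}) and perform the integration exactly as in Section~5.3, obtaining
\[
N(S\cap V^{(i)};X)=\mu_M(S)\cdot\frac{\Vol(\RR_X(v))}{n_i}+O_M(X^{5/6})=\mu_M(S)\cdot\frac{\pi^2}{12 n_i}\,X+O_M(X^{5/6}).
\]
The contributions from reducible forms---either those with $a=0$, which are automatically excluded from $S^\irr$, or those with $a\neq 0$, bounded by Lemma~\ref{reducible}---and from $C_3$-points (Lemma~\ref{reducible2}) are each $O(X^{3/4+\epsilon})$ and are absorbed into the error. Dividing by $X$, letting $X\to\infty$, and invoking $\mu_M(S)=\prod_p\mu_p(S)$ gives~(\ref{ramanujan}).

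The main point to verify is the uniform-in-$X$ error estimate in the second step: since $M$ depends only on $S$ and not on $X$, all $M$-dependence can be absorbed into implied constants. Proposition~\ref{genbound}'s implied constant depends only on the dimension, the maximum multiplicity, and the number and degree of the defining inequalities of $B(n,t,\lambda,X)$---none of which change under translation by $v_j$ or rescaling by the fixed integer $M$---so the congruence estimate is legitimate and the whole argument goes through unchanged.
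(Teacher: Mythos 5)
Your proposal is correct and takes essentially the same approach as the paper: decompose $S$ into cosets of $M\cdot V_\Z$, apply (a scaled version of) Proposition~\ref{genbound} and Lemma~\ref{bntlx} to each coset, and sum. The only difference is cosmetic: the paper keeps explicit track of the $m$- and $a$-dependence in the error term (see equations (\ref{sestimate})--(\ref{sestimate3})), which is not needed for Theorem~\ref{cong} itself but is used later to control the rate of convergence when $m$ grows with $X$; you absorb this dependence into an $O_M$ constant, which is perfectly adequate for the fixed-modulus statement at hand.
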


To obtain Theorem~\ref{cong}, suppose $S\subset V^{(i)}_\Z$ is defined
by congruence conditions modulo some integer~$m$.  Then $S$ may be
viewed as the intersection of $V_\Z^{(i)}$ with the union $U$ of (say)
$k$ translates $L_1,\ldots,L_k$ of the lattice $m\cdot V_\Z$.  For
each such lattice translate $L_j$, we may use formula (\ref{avg}) and
the discussion following that formula to compute $N(L_j\cap
V_\Z^{(i)};X)$, where each $d$-dimensional volume is scaled by a factor
of $1/m^d$ to reflect the fact that our new lattice has been scaled by
a factor of $m$.  With these scalings, the volumes of the
$d$-dimensional projections of $B(n,t,\lambda,X)$, for $d=3$, $2$, and
$1$ are seen to be at most $O(m^{-3}C^3t^3\lambda^3)$,
$O(m^{-2}C^2t^4\lambda^2)$, and $O(m^{-1}Ct^3\lambda)$, respectively.
Let $a\geq 1$ be the smallest nonzero first coordinate of any point in
$L_j$.  Then, analogous to Lemma~\ref{bntlx}, the number of lattice
points in $B(n,t,\lambda,X)\cap L_j$ with first coordinate nonzero is
\begin{equation}\label{congcount}
\left\{
\begin{array}{cl}
0 & \mbox{{if} $\frac{C\lambda}{t^3}<a$};\\[.1in]
\displaystyle{\frac{\Vol(B(n,t,\lambda,X))}{m^4} +
O\left(\frac{C^3t^3\lambda^3}{m^3}
+\frac{C^2t^4\lambda^2}{m^2}
+\frac{Ct^3\lambda}{m}+1\right)}&\mbox{{otherwise.}}
\end{array}\right.
\end{equation}
Carrying out the integral for $N(L_j;X)$ as in (\ref{bigint}), we
obtain, up to an error of $O(X^{3/4+\epsilon})$ corresponding to the
reducible points in Lemma~\ref{reducible}, that
\begin{equation}\label{sestimate}
N(L_j\cap V_\Z^{(i)};X) = \frac{\Vol(\RR_X(v))}{m^4}
+ O\left(\frac{1}{M_i(C)}
\left[\frac{C^{10/3}X^{5/6}}{a^{1/3}m^3}+\frac{C^{8/3}X^{2/3}}{a^{2/3}m^2}+
\frac{C^{4/3}X^{1/3}}{a^{1/3}m}+\log\,X\right]\right).
\end{equation}
Assuming $m=O(X^{1/6})$, this gives (up to the $O(X^{3/4+\epsilon})$
reducible points of Lemma~\ref{reducible}):
\begin{equation}\label{sestimate2}
N(L_j;X) = m^{-4}{\Vol(\RR_X(v))} + O(m^{-3}X^{5/6}),
\end{equation}
where the implied constant is again independent of $m$.  Summing
over $j$, we thus obtain
\begin{equation}\label{sestimate3}
N(S;X) = km^{-4}{\Vol(\RR_X(v))} + O(km^{-3}X^{5/6}) + O(X^{3/4}).
\end{equation}
Finally, the identities $km^{-4}=\prod_p\mu_p(S)$
and $\Vol(\RR_X(v))=\pi^2/(12n_i)\cdot X$ yield (\ref{ramanujan}).

Note that (\ref{sestimate})--(\ref{sestimate3}) also give
some information on the {rate} of
convergence of (\ref{ramanujan}) for various $S$, which will indeed be of use
when studying second order terms.

\section{Slicing and second order terms}

In Section 5, we proved that $N(V_\Z^{(i)};X)=c_1^{(i)}X+O(X^{5/6})$,
where $c_1^{(0)}=\pi^2/72$ and $c_1^{(1)}=\pi^2/24$.  Let
$c_2^{(0)}=\sqrt{3}r/30$ and $c_2^{(1)}=r/10$ where
$r=\displaystyle\frac{\zeta(2/3)\Gamma(1/3)(2\pi)^{1/3}}{\Gamma(2/3)}$.
In this section, we prove that
$$N(V_\Z^{(i)};X)=c_1^{(i)}X+c_2^{(i)}X^{5/6}+O(X^{3/4}),$$
thereby proving Theorems \ref{bcfcount1} and \ref{ringwithres}.

\subsection{Proofs of Theorems \ref{bcfcount1} and \ref{ringwithres}}

In Equation (\ref{impeqsec5}) of the previous section (with
$S=V_\Z^{(i)}$), we obtained a formula for the number
$N(V_\Z^{(i)};X)$ in terms of an integral over a chosen fundamental
domain $\FF$ for the left action of $\GL_2(\Z)$ on $\GL_2(\R)$.
Evaluating this integral required us to evaluate the number of
integral points in $B(n,t,\lambda,X)$ for various $n$,~$t$,~$\lambda$,~$X$.
Using Proposition \ref{genbound}, we concluded that the number of
integral points in $B(n,t,\lambda,X)$ is equal to the volume of
$B(n,t,\lambda,X)$ with an error of $O(t^3\lambda^3)$.

In this section, we count points in dyadic ranges of the discriminant. Let
$B(n,t,\lambda,X/2,X)$ be the subset of $B(n,t,\lambda,X)$ that
contains points having discriminant greater than $X/2$ in absolute value. We again
estimate the number of integer points in $B(n,t,\lambda,X/2,X)$ to be
equal to its volume, again with an error of $O(t^3\lambda^3)$.
To obtain a more precise count for the number of lattice points in
$B(n,t,\lambda,X/2,X)$ when $t$ is large, we {\it slice} the set
$B(n,t,\lambda,X/2,X)$ by the coefficient of $x^3$. More precisely,
for $a\in\Z$, let $B_a(n,t,\lambda,X/2,X)$ denote the set of binary
cubic forms in $B(n,t,\lambda,X/2,X)$ whose $x^3$-coefficient is equal
to $a$. Then we have:
\begin{equation}\label{eqintro}
  \#\{x\in V_\Z^{\irr}\cap B(n,t,\lambda,X/2,X)\}=\displaystyle\sum_{\substack{a\in\Z\\a\neq 0}}\#\{x\in V_\Z^{\irr}\cap B_a(n,t,\lambda,X/2,X)\}.
\end{equation}
We then again use Proposition \ref{genbound} to estimate the right
hand side of (\ref{eqintro}).  We shall slice the set
$B(n,t,\lambda,X/2,X)$ when $t$ is ``large''. We separate the large $t$
from the small as follows.

Let $\Psi$ be a smooth function on $\R_{\geq 0}$ such that $\Psi(x)=1$
for $x\leq 2$ and $\Psi(x)=0$ for $x\geq 3$.  Let $\Psi_0$ denote the
function $1-\Psi$. Let $N(V_\Z^{(i)};X/2,X)$ denote the number of
$\GL_2(\Z)$-orbits on $V^{(i),\irr}_\Z$ having discriminant between
$X/2$ and $X$ in absolute value .  Then for any $\kappa>0$, we have just as in
(\ref{avg}) that
\begin{equation}\label{avgeq1}
\begin{array}{rcl}
  N(V_\Z^{(i)};X/2,X) \!\!\!\!\!\!\!\!&\!\!\!\!=\!\!\!\!& \displaystyle{\frac1{M_i}\int_{N'(a)A'\Lambda}
    \Psi\left(\frac{t\kappa}{\lambda^{1/3}}\right)\#\{x\in V_\Z^{(i),\irr}\cap B(n,t,\lambda,X/2,X)\}t^{-2}
    dn\, d^\times t\,d^\times \lambda\,}\vspace{.1in}\\
  &\;\;\;\;\;\;\;\;\;\;\;+\!\! & \displaystyle{\frac1{M_i}\int_{N'(a)A'\Lambda}
    \!\!\Psi_0\left(\frac{t\kappa}{\lambda^{1/3}}\right)\#\{x\in V_\Z^{(i),\irr}\cap B(n,t,\lambda,X/2,X)\}t^{-2}
    dn\, d^\times t\,d^\times \lambda\,.}
\end{array}
\end{equation}
Note that the first summand of the right hand side of (\ref{avgeq1})
is non-zero only when $t<3\lambda^{1/3}/\kappa$, while the second
summand is non-zero only when $t>2\lambda^{1/3}/\kappa$. We will
choose $\kappa$ later to minimize our error term. For now, we merely insist $\displaystyle\lim_{X\rightarrow\infty}\kappa=\infty$ and $\kappa<X^{3/4}$.

Let $D_0$ be a constant that bounds the discriminant of every point in $B$. Since the absolute value of the discriminant of every point in $B$ is
bounded below by $1$ and above by $D_0$, we see that $B(n,t,\lambda,X/2,X)$ is empty unless
$(\frac{X}{D_0})^{1/4}<\lambda<X^{1/4}$. Also, note that $\Psi\left(\frac{t\kappa}{\lambda^{1/3}}\right)$ vanishes whenever $\lambda<27t^3\kappa^3$.
 Thus, by Proposition \ref{genbound}, we see that
the first summand of the right hand side of (\ref{avgeq1}) is
\begin{eqnarray}\label{bigint1}
\frac1{M_i}
\int_{\lambda=(\frac{X}{D_0})^{1/4}}^{X^{1/4}}
\int_{t=\sqrt[4]3/\sqrt{2}}^{3\lambda^{1/3}/\kappa}\!\! \int_{N'(t)}
\!\!\Psi\left(\frac{t\kappa}{\lambda^{1/3}}\right)
(\Vol(B(n,t,\lambda,X/2,X)) +
O(\max\{{t^3}{\lambda^3},1\})) t^{-2}
dn\, d^\times t\, d^\times \lambda.
\end{eqnarray}
The integral of the error term in (\ref{bigint1}) is easily seen to be
$$O\left(\int_{(\frac{X}{D_0})^{1/4}}^{X^{1/4}}
  \int_{t=\sqrt[4]3/\sqrt2}^{\lambda^{1/3}/\kappa}\lambda^3t\;d^{\times}t\;d^\times\lambda\right)=O\left(\frac{X^{5/6}}{\kappa}\right).$$
Therefore, the first summand of the right hand side of (\ref{avgeq1})
is equal to
\begin{equation}\label{eqfirstpart}
\frac{1}{M_i}\int_{\lambda=(\frac{X}{D_0})^{1/4}}^{X^{1/4}}
\int_{t=\sqrt[4]3/\sqrt{2} }^\infty\int_{N'(t)}
\Psi\left(\frac{t\kappa}{\lambda^{1/3}}\right)
\lambda^4\Vol(B(X/(2\lambda^4),X/\lambda^4))t^{-2}dn\,d^\times t\,d^\times\lambda +O\left(\frac{X^{5/6}}{\kappa}\right),
\end{equation}
where $B(d_1,d_2)$ denotes the set of all points in $B$ with discriminant
between $d_1$ and $d_2$.

To evaluate the second summand on the right hand side of
(\ref{avgeq1}), we break up the integrand into a sum over points with
fixed $x^3$-coefficient.  Indeed, we see that it is equal to
\begin{equation}\label{eq211}
  \frac{1}{M_i}\sum_{\substack{a\in\Z\\a\neq 0}}\int_{g\in \FF}\Psi_0\left(\frac{t\kappa}{\lambda^{1/3}}\right)\#\{x\in V_\Z^{(i),\irr}\cap B_a(n,t,\lambda,X/2,X)\}dg.\end{equation}
Since $B$ is $K$-invariant, the number of points in $B_a(n,t,\lambda,X/2,X)$ is equal to the number of points in
$B_{-a}(n,t,\lambda,X/2,X)$. Note that the integrand vanishes for
$a>O(\kappa^3) $ where the implied constant depends only on $B$. We again use Proposition \ref{genbound} to see that (\ref{eq211}) is equal to
\begin{equation}\label{eq222}
  \frac{2}{M_i}\sum_{a=1}^{O(\kappa^3)}\int_{\lambda=(\frac{X}{D_0})^{1/4}}^{X^{1/4}}
  \int_{t=\sqrt[4]3/\sqrt{2}}^{\infty}\! \int_{N'(t)}\Psi_0\left(\frac{t\kappa}{\lambda^{1/3}}\right)(\Vol(B_a(n,t,\lambda,X/2,X))+O(\max\{\lambda^2t^4,1\}))t^{-2}dnd^\times td^\times\lambda.
\end{equation}
Again, we can estimate the integral of the error in (\ref{eq222}) to
be on the order of
\begin{equation}\label{ila}
\sum_{a=1}^{O(\kappa^3)}\int_{\lambda=(\frac{X}{D_0})^{1/4}}^{X^{1/4}}
\int_{t=\sqrt[4]3/\sqrt{2}}^{\lambda^{1/3}/a^{1/3}}\lambda^2t^4\;t^{-2}d^{\times}t\;d^\times\lambda
=X^{2/3}\sum_{a=1}^{O(\kappa^3)}O(a^{-2/3})=O\left(\kappa
  X^{2/3}\right).
  \end{equation}
  We assume from now on that $\kappa\leq \frac13X^{1/12}$.
For sufficiently large values of $X$, it follows that if $\Psi_0(t\kappa/\lambda^{1/3})$ is nonzero, then
$t>\frac{2\lambda^{1/3}}{\kappa}>1$ since $\lambda>(\frac{X}{D_0})^{1/4}$. Thus, the integral over $N'$ in (\ref{eq222}) always goes
between $-1/2$ and $1/2$.  The integral of the main term in
(\ref{eq222}) is now computed to be
\begin{equation}\label{eqsmtcompsitm}
\begin{array}{rcl}
& &\displaystyle{\frac{2}{M_i}\sum_{a=1}^\infty\int_{\lambda=(\frac{X}{D_0})^{1/4}}^{X^{1/4}}
\int_{t>0}\!\!\Psi_0\left(\frac{t\kappa}{\lambda^{1/3}}\right)(\Vol(B_a(0,t,\lambda,X/2,X))t^{-2}d^\times td^\times\lambda}\vspace{.1in}\\
&=&\displaystyle{\frac{2}{M_i}\sum_{a=1}^\infty\int_{\lambda=(\frac{X}{D_0})^{1/4}}^{X^{1/4}}
\int_{t>0}\!\!\Psi_0\left(\frac{t\kappa}{\lambda^{1/3}}\right)\lambda^3t^3\Vol(B_{\frac{at^3}{\lambda}}(X/(2\lambda^4),X/\lambda^4))t^{-2}d^\times td^\times\lambda,}
\end{array}
\end{equation}
where $B_{a}(d_1,d_2)$ denotes the set of forms in $B$ having $x^3$-coordinate equal to $a$ and discriminant between $d_1$ and $d_2$ in absolute value.
We change variables to compute the right hand side of
(\ref{eqsmtcompsitm}); let $u={t^3a}/{\lambda}$ so that $d^\times
u=3d^\times t$. The main term in (\ref{eq222}) is therefore equal to
\begin{equation}\label{eqssbs}
\frac{2}{3M_i}\sum_{a=1}^\infty\int_{\lambda=(\frac{X}{D_0})^{1/4}}^{X^{1/4}}\int_{u>0}
\Psi_0\left(\frac{u^{1/3}\kappa}{a^{1/3}}\right)\frac{\lambda^{10/3}u^{1/3}}{a^{1/3}}
\Vol(B_{u}(X/(2\lambda^4),X/\lambda^4))d^\times ud^\times\lambda.
\end{equation}
To compute the expression above, we first sum over $a$. Let $\Phi(z)$
be equal to $\Psi_0(u^{1/3}/z^{1/3})$.  For a function $F$ defined on the positive reals,
let $\widetilde{F}(s)$ denote the Mellin transforms of $F$.
Since the first derivative $\Psi'_0$ is smooth and Schwartz class, the Mellin
transform $\widetilde{\Psi'_0}(s)$ is holomorphic, entire, and
rapidly decaying on any vertical line $\sigma+it$ as $|t|\to\infty$. Moreover,
by standard properties of the Mellin transform, we have the equality $\widetilde{\Psi'_0}(s+1) = s\widetilde{\Psi_0}(s)$. Thus the functions
$\widetilde{\Psi_0}(s)$ and $\widetilde{\Phi}(s)$ are entire except for a possible simple pole at 0 and rapidly decreasing on vertical lines.
Moreover, the residue at 0 of $\widetilde{\Psi_0}(s)$ is equal to $$\widetilde{\Psi'_0}(1)=\int_{0}^{\infty}\Psi'_0(y)dy = 1.$$
Therefore,
\begin{equation}\label{smtimp}
  \begin{array}{rcl}
\displaystyle\sum_{a=1}^\infty a^{-\frac{1}{3}}\Psi_0\left(\frac{u^{1/3}\kappa}{a^{1/3}}\right)&=&\displaystyle\int_{{\rm Re}\;s=2}\zeta\left(s+1/{3}\right)\widetilde{\Phi}(s)\kappa^{3s}ds\\[.15in]
&=&3\displaystyle\int_{{\rm Re}\;s=2}\zeta\left(s+{1}/{3}\right)\widetilde{\Psi_0}(-3s)(\kappa^3u)^{s}ds\\[.165in]
&=&\zeta\left({1}/{3}\right)+3\widetilde{\Psi_0}(-2)(\kappa^3u)^{2/3}+O_M(\min\{(\kappa^3u)^{-M},1\})  
  \end{array}
\end{equation}
for any integer $M$, where we obtain the last equality by moving the
line of integration to ${\rm Re}\;s=-M$ and computing the residues at $s=0$ and $s=\frac{2}{3}$.  Therefore, (\ref{eqssbs}) is equal to
\begin{eqnarray}\label{eq2sum}
\frac{2}{3M_i}\int_{\lambda=(\frac{X}{D_0})^{1/4}}^{X^{1/4}}\int_{u>0}
\left[\zeta(1/3)+3\widetilde{\Psi_0}(-2)(\kappa^3u)^{2/3}\right]\lambda^{10/3}u^{1/3}
\Vol(B_{u}(X/(2\lambda^4),X/\lambda^4))d^\times ud^\times\lambda,
\end{eqnarray}
with an error of
\begin{equation}\label{eqmissingerror}
O\left(\int_{\lambda=(\frac{X}{D_0})^{1/4}}^{X^{1/4}}\int_{u>0}\min\{(\kappa^3u)^{-1},1\}\lambda^{10/3}u^{1/3}
\Vol(B_{u}(X/(2\lambda^4),X/\lambda^4))d^\times ud^\times\lambda,\right).
\end{equation}
We shall eventually choose $\kappa$ to be equal to $\frac13X^{1/12}$.
Therefore, (\ref{eqmissingerror}) can be bounded above by
\begin{equation}\label{miserrorest}
O\left(\int_{\lambda=(\frac{X}{D_0})^{1/4}}^{X^{1/4}}\int_{u=0}^{\kappa^{-3}}\lambda^{10/3}u^{1/3}d^\times ud^\times\lambda\right)=O\left(\frac{X^{5/6}}{\kappa}\right).
\end{equation}
We now evaluate the integral of the two summands in the integrand of (\ref{eq2sum}) separately. Evaluating
the integral of the second summand, we obtain
\begin{eqnarray*}
&\phantom{=}&\frac{2}{M_i}\int_{\lambda=(\frac{X}{D_0})^{1/4}}^{X^{1/4}}\int_{u>0}\widetilde{\Psi_0}(-2)\kappa^2\lambda^{10/3}u
\Vol(B_{u}(X/(2\lambda^4),X/\lambda^4))d^\times ud^\times\lambda\\
&=&\frac{1}{M_i}\int_{\lambda=(\frac{X}{D_0})^{1/4}}^{X^{1/4}}
\widetilde{\Psi_0}(-2)\kappa^2\lambda^{10/3}\Vol(B(X/(2\lambda^4),X/\lambda^4))d^\times\lambda,
\end{eqnarray*}
which is simply equal to
\begin{equation}\label{eqmt2}
\frac{1}{M_i}\int_{\lambda=(\frac{X}{D_0})^{1/4}}^{X^{1/4}}
\int_{t=0}^\infty
\Psi_0\left(\frac{t\kappa}{\lambda^{1/3}}\right)
\lambda^{\frac{10}3+\frac23}\Vol(B(X/(2\lambda^4),X/\lambda^4))t^{-2}d^\times td^\times\lambda.
\end{equation}
Adding (\ref{eqmt2}) to the main term of (\ref{eqfirstpart}) gives us
the following.
\begin{eqnarray*}
&\phantom{=}&
\frac{1}{M_i}\int_{\lambda=(\frac{X}{D_0})^{1/4}}^{X^{1/4}}
\int_{t=\sqrt[4]3/\sqrt{2}}^\infty\int_{N'(t)}
\left(\Psi\left(\frac{t\kappa}{\lambda^{1/3}}\right)+\Psi_0\left(\frac{t\kappa}{\lambda^{1/3}}\right)\right)
\lambda^4\Vol(B(X/(2\lambda^4),X/\lambda^4))t^{-2}dnd^\times td^\times\lambda\\
&=&\frac{1}{M_i}\int_{\lambda=(\frac{X}{D_0})^{1/4}}^{X^{1/4}}
\int_{t=\sqrt[4]3/\sqrt{2}}^\infty\int_{N'(t)}
(\Vol(B(n,t,\lambda,X/2,X)))t^{-2}dnd^\times td^\times\lambda,
\end{eqnarray*}
which can be evaluated, as in Section 5, to be equal to
$c_1^{(i)}X/2$.

Now the integral of the first summand in (\ref{eq2sum}) is
\begin{equation}\label{eqsmt1}
\frac{2}{3M_i}\int_{\lambda=(\frac{X}{D_0})^{1/4}}^{X^{1/4}}\int_{u>0}
\zeta(1/3)\lambda^{10/3}u^{1/3}\Vol(B_{u}(X/(2\lambda^4),X/\lambda^4))d^\times ud^\times\lambda.
\end{equation}
Let $a(v)$, $b(v)$, $c(v),$ and $d(v)$ denote the four coordinates of points
$v\in B$. Then (\ref{eqsmt1}) is equal to
\begin{eqnarray*}
  &\phantom{=}&\frac{1}{3M_i}\zeta(1/3)\int_{\lambda=(\frac{X}{D_0})^{1/4}}^{X^{1/4}}\int_{B(X/(2\lambda^4),X/\lambda^4)}
  \lambda^{10/3}a(v)^{1/3}\frac{dv}{a(v)}
  d^\times\lambda\\
  &=&\frac{1}{3M_i}\zeta(1/3)\int_{\lambda=(\frac{X}{D_0})^{1/4}}^{X^{1/4}}\int_{B(X/(2\lambda^4),X/\lambda^4)}
  \lambda^{10/3}a(v)^{-2/3}dvd^\times\lambda.
\end{eqnarray*}
Carrying out the integral over $\lambda$, we see that (\ref{eqsmt1}) is equal to
\begin{equation}\label{eqalmostdone}
\frac{1}{10M_i}\zeta(1/3)(1-2^{-5/6})X^{5/6}\int_{B}|\Disc(v)|^{-5/6}a(v)^{-2/3}dv.
\end{equation}
Recalling the definition of $M_i$ in (\ref{midef}), we then see that (\ref{eqsmt1}) is equal
to
$$\frac{2\pi}{10n_i}\zeta(1/3)(1-2^{-5/6})X^{5/6}\displaystyle\frac{\int_{B}|\Disc(v)|^{-5/6}a(v)^{-2/3}dv}{\int_{B}|\Disc(v)|^{-1}dv}.$$

We now evaluate the ratio
\begin{equation}\label{eqratio}\displaystyle\frac{\int_{B}|\Disc(v)|^{-5/6}a(v)^{-2/3}dv}{\int_{B}|\Disc(v)|^{-1}dv}.\end{equation}
The ratio in (\ref{eqratio}) is independent of the $K$-invariant set
$B$. Thus, for any $f\in V_\R^{(i)}$, (\ref{eqratio}) is equal to
$$|\Disc(f)|^{1/6}\int_Ka(\gamma\cdot f)^{-2/3}d\gamma=|\Disc(f)|^{1/6}\int_Kf((1,0)\cdot\gamma)^{-2/3}d\gamma=\frac{|\Disc(f)|}{2\pi}^{1/6}\int_0^{2\pi}f(\cos(\theta),\sin(\theta))^{-2/3}d\theta.$$
We now choose convenient points $f\in V_\R^{(i)}$ for $i=0,1$. For
$i=1$ we choose $f(x,y)=x^3+xy^2$ which has discriminant~$-4$.  Then
$$\frac{|\Disc(f)|}{2\pi}^{1/6}\int_0^{2\pi}f(\cos(\theta),\sin(\theta))^{-2/3}d\theta=\frac{2^{1/3}}{2\pi}\int_{0}^{2\pi}\cos(\theta)^{-2/3}d\theta=\frac{2^{4/3}}{\pi}\int_{0}^{\pi/2}\cos(\theta)^{-2/3}d\theta.$$
The substitution $y=\cos(\theta)$ yields
$$\frac{2^{4/3}}{\pi}\int_{0}^{\pi/2}\cos(\theta)^{-2/3}d\theta=\frac{2^{4/3}}{\pi}\int_0^1y^{-2/3}(1-y^2)^{-1/2}dy.$$
The substitution $z=y^2$ then gives
$$\frac{2^{4/3}}{\pi}\int_0^1y^{-2/3}(1-y^2)^{-1/2}dy=\frac{2^{1/3}}{\pi}\int_0^1z^{-5/6}(1-z)^{-1/2}dz=\frac{2^{1/3}\Gamma(1/6)\Gamma(1/2)}{\pi\Gamma(2/3)},$$
where the final equality follows from evaluating the beta function
${\mathrm{B}}(\frac12,\frac16)$.
Using the standard identities
\begin{equation}
\label{eq:gzidentity}
\begin{array}{rcl}
\Gamma(1/6)&=&\displaystyle{2^{5/3}3^{-1/2}\pi^{3/2}/\Gamma(2/3)^2},\\[.05in]
\Gamma(2/3)&=&\displaystyle{3^{-1/2}2\pi/\Gamma(1/3)},\\[.05in]
\zeta(1/3)&=&\displaystyle{(2\pi)^{-2/3}\Gamma(2/3)\zeta(2/3)},
\end{array}
\end{equation}
we finally see that (\ref{eqalmostdone}) is equal
to $(1-2^{-5/6})c_2^{(1)}X^{5/6}$.

Similarly, for $i=0$ we choose the form $f(x,y)=x^3-3xy^2\in
V_\R^{(0)}$. Using the identity
$\cos(3\theta)=\cos^3(\theta)-3\cos(\theta)\sin^2(\theta)$ we see,
exactly as above, that (\ref{eqalmostdone}) is equal to
$(1-2^{-5/6})c_2^{(0)}X^{5/6}$.  Therefore,
$$N(V_\Z^{(i)};X/2,X)=c_1^{(i)}X/2+c_2^{(i)}(1-2^{-5/6})X^{5/6}+O(X^{2/3}\kappa)+O(X^{5/6}/\kappa),$$
and choosing $\kappa$ to be equal to $\frac13{X^{1/12}}{}$ proves
Theorems \ref{bcfcount1} and \ref{ringwithres}.

\subsection{Congruence conditions}

Let $S\subset V_\Z^{(i)}$ be a $\GL_2(\Z)$-invariant set. We define
$N(S;X/2,X)$ to be the number of irreducible $\GL_2(\Z)$-orbits on $S$ having
discriminant between $X/2$ and $X$ in absolute value.
Identically as in (\ref{avgeq1}), we then~have
\begin{equation*}
\begin{array}{rcl}
  N(S;X/2,X) \!\!\!\!\!\!\!\!&\!\!\!\!=\!\!\!\!& \displaystyle{\frac1{M_i}\int_{N'(a)A'\Lambda}
    \Psi\left(\frac{t\kappa}{\lambda^{1/3}}\right)\#\{x\in
    S^\irr\cap B(n,t,\lambda,X/2,X)\}t^{-2}
    dn\, d^\times t\,d^\times \lambda\,}\vspace{.1in}\\
  &\;\;\;\;\;\;\;\;\;\;\;+\!\! &\displaystyle{\frac1{M_i}\int_{N'(a)A'\Lambda}
    \Psi_0\left(\frac{t\kappa}{\lambda^{1/3}}\right)\#\{x\in
    S^\irr\cap B(n,t,\lambda,X/2,X)\}t^{-2}
    dn\, d^\times t\,d^\times \lambda\,.}
\end{array}
\end{equation*}
We use this as a definition of $N(S;X/2,X)$ even when the set
$S\subset V_\Z^{(i)}$ is not $\GL_2(\Z)$-invariant.

Suppose $\mathcal L\subset V_\Z$ is any sublattice
of index $T$ in $V_\Z$ that is defined by congruence conditions modulo~$m$, so that $mV_\Z\subset L$. In what follows, we
compute $N(\mathcal L\cap V^{(i)}_\Z;X/2,X)$ and $N(\mathcal L\cap V^{(i)}_\Z;X)$, 
for $i=0,1$. The
computation is very similar to that of $N(V_\Z^{(i)};X/2,X)$ and $N(V_\Z^{(i)};X)$,
and we highlight the differences that~occur.

We have
\begin{equation}\label{avgeq2}
\begin{array}{rcl}
 \!\! N(\mathcal L\cap V^{(i)}_\Z;X/2,X)\!\!\!\!\!\!\!\!\!\!
&\!\!\!\!\!\!=\!\!\!\!\!\!& \displaystyle{\frac1{M_i}\int_{N'(a)A'\Lambda}
  \!  \Psi\left(\frac{t\kappa}{\lambda^{1/3}}\right)\#\{x\in\mathcal
    L\cap V_\Z^{(i),\irr}\cap B(n,t,\lambda,X/2,X)\}t^{-2}
    dn\, d^\times t\,d^\times \lambda\,}\vspace{.1in}\\
  &\;\;\;\;\;\;\;\;\;\;\;\!+\!\!\!&\displaystyle{\frac1{M_i}\int_{N'(a)A'\Lambda}
  \!  \Psi_0\left(\frac{t\kappa}{\lambda^{1/3}}\right)\#\{x\in\mathcal
    L\cap V_\Z^{(i),\irr}\cap B(n,t,\lambda,X/2,X)\}t^{-2}
    dn\, d^\times t\,d^\times \lambda\,.}
\end{array}
\end{equation}
Analogously to equation \eqref{congcount}, we see that the first summand of the right
hand side of (\ref{avgeq2}) is equal to
\begin{equation*}
\begin{array}{rl}
  &\displaystyle{\frac{1}{TM_i}\int_{\lambda=(\frac{X}{D_0})^{1/4}}^{X^{1/4}}
  \int_{t=\sqrt[4]{3}/\sqrt{2}}^\infty\int_{N'(t)}
  \Psi\left(\frac{t\kappa}{\lambda^{1/3}}\right)
  \lambda^4\Vol(B(X/(2\lambda^4),X/\lambda^4))t^{-2}dnd^\times td^\times\lambda}\\[.175in]
  &\!\!\!\!\!\!\displaystyle{+ \;\frac{m^4}{TM_i}\int_{\lambda=(\frac{X}{D_0})^{1/4}}^{X^{1/4}}\int_{t=\sqrt[4]{3}/\sqrt{2}}^\infty\int_{N'(t)}
  \Psi\left(\frac{t\kappa}{\lambda^{1/3}}\right)
  \cdot O\left(\frac{t^3\lambda^3}{m^3}
+\frac{t^4\lambda^2}{m^2}
+\frac{t^3\lambda}{m}+1\right)t^{-2}dnd^\times td^\times\lambda}\,.
\end{array}
\end{equation*}
We evaluate the second term above to be
\begin{equation}\label{firstparterror}
O\left( \frac{mX^{5/6}}{T\kappa} + \frac{m^2X^{2/3}}{T\kappa^2} + \frac{m^2X^{1/3}}{T\kappa}+\frac{m^4}{T}\right).
\end{equation}
As in~(\ref{eq211}), we see that the second summand of the right
hand side of~(\ref{avgeq2}) is equal to
\begin{equation}\label{eq2112}
  \frac{1}{M_i}\sum_{\substack{a\in\Z\\a\neq
      0}}\int_\FF\Psi_0\left(\frac{t\kappa}{\lambda^{1/3}}\right)\#\{x\in \mathcal L^{\irr}\cap V^{(i)}_\Z\cap B_a(n,t,\lambda,X/2,X)\}dg.\end{equation}

We write $T=T_1 T_2$, where the $x^3$-coefficient of
every element in $\mathcal L$ is a multiple of $T_1$ and the index of
$\mathcal L_a$ in $V_a$ is equal to $T_2$; here $\mathcal L_a$ (resp.\ $V_a$)
denotes the set of all forms in $\mathcal L$ (resp.\ $V_\Z$) whose $x^3$-coefficient is
equal to $a$.  As in~(\ref{eq211})--(\ref{eqssbs}), we estimate~(\ref{eq2112}) to be
\begin{align*}
&\frac{2}{3T_2M_i}\sum_{\substack{a=1\\T_1|a}}^\infty\int_{\lambda=(\frac{X}{D_0})^{1/4}}^{X^{1/4}}\int_{u>0}
\Psi_0\left(\frac{u^{1/3}\kappa}{a^{1/3}}\right)\frac{\lambda^{10/3}u^{1/3}}{a^{1/3}}
\Vol(B_{u}(X/(2\lambda^4),X/\lambda^4))d^\times ud^\times\lambda\\
&\!\!\!\!\!\!\!+\;\sum_{\substack{a=1\\T_1|a}}^{O(\kappa^3)}\int_{\lambda=(\frac{X}{D_0})^{1/4}}^{X^{1/4}}
\int_{t=\lambda^{1/3}/\kappa}^{\lambda^{1/3}/a^{1/3}}\frac{m^3}{T_2}\cdot O\left(\frac{\lambda^2t^4}{m^2}+\frac{\lambda t^2}{m} +1\right)\;t^{-2}d^{\times}t\;d^\times\lambda.
\end{align*}
The error term is easily integrated to give
\begin{equation}
O\left( \frac{m\kappa X^{2/3}}{T} + \frac{m^2X^{1/4}\kappa}{T} +
  \frac{m^3X^{1/4}}{T}\right). 
\end{equation}
Analogously to the computations in (\ref{smtimp}) and (\ref{eq2sum}), we have
\begin{eqnarray*}
  \sum_{\substack{a=1\\T_1|a}}^\infty a^{-\frac{1}{3}}\Psi_0\left(\frac{u^{1/3}\kappa}{a^{1/3}}\right)&=&T_1^{-1/3}\int_{{\rm Re}\;s=2}\zeta(s+{1}/{3})\widetilde{\Phi}(s)(T_1^{-1/3}\kappa)^{3s}ds\\
  &=&3T_1^{-1/3}\int_{{\rm Re}\;s=2}\zeta(s+{1}/{3})\widetilde{\Psi_0}(-3s)((T_1^{-1/3}\kappa)^3u)^{s}ds\\[.1in]
  &=&T_1^{-1/3}\zeta(1/3)+3\widetilde{\Psi_0}(-2)T_1^{-1}(\kappa^3u)^{2/3}+O_M(T_1^{-1/3}\min\{(T_1^{-1}\kappa^3u)^{-M},1\})
\end{eqnarray*}
for any integer $M$.
Identically as in (\ref{miserrorest}), the error coming from the term
$O_M(T_1^{-1/3}\min\{(T_1^{-1}\kappa^3u)^{-M},1\})$ is equal to $O({X^{5/6}}/{(\kappa T_2}))$.
The total error is thus
$$O\left( \frac{m\kappa X^{2/3}}{T} + \frac{m^2X^{1/4}\kappa}{T} + \frac{m^3X^{1/4}}{T}+\frac{mX^{5/6}}{T\kappa} + \frac{m^2X^{2/3}}{T\kappa^2} + \frac{m^2X^{1/3}}{T\kappa}+\frac{m^4}{T}\right). $$

We will only be interested in the range where $m\leq X^{1/4}$. In this range, we optimize the above by taking $\kappa=X^{1/12}$ to get an error of 
$$O\left( \frac{mX^{3/4}}{T} + \frac{m^2X^{1/2}}{T} + \frac{m^3X^{1/4}}{T}\right) = O\left(\frac{mX^{3/4}}{T}\right).
$$
We thus have the following theorem:
\begin{theorem}\label{shincong}
  Let $\mathcal L\subset V_\Z$ be a sublattice of index $T$ in $V_\Z$, containing $mV_\Z$.
  Write $T=T_1 T_2$, where the $x^3$-coefficient of each element
  in $\mathcal L$ is a multiple of $T_1$ and the corresponding index
  of $\mathcal L_a$ in $V_a$ is equal to~$T_2$. Assume further that $m^4\leq X$. Then
\begin{equation}
N(\mathcal L \cap V^{(i)}_\Z;X/2,X)=\displaystyle\frac{c_1^{(i)}}{T}\frac{X}{2} +
\displaystyle(1-2^{-5/6})\frac{c_2^{(i)}}{T_1^{1/3}T_2}X^{5/6}+O\left(\frac{m}{T}X^{3/4}\right).
\end{equation}
\end{theorem}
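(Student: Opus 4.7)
The plan is to follow the computation of Section~6.1 essentially line by line, making bookkeeping modifications to account for the fact that we are counting in the sublattice $\mathcal{L}$ rather than in $V_\Z$. As in the derivation leading to (\ref{avgeq2}), I would split $N(\mathcal{L}\cap V^{(i)};X/2,X)$ into a small-$t$ piece (weight $\Psi(t\kappa/\lambda^{1/3})$) and a large-$t$ piece (weight $\Psi_0(t\kappa/\lambda^{1/3})$), ultimately choosing $\kappa = X^{1/12}$.

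For the small-$t$ piece, I would apply Proposition~\ref{genbound} directly to $\mathcal{L}\cap B(n,t,\lambda,X/2,X)$. The volume contribution scales by $1/m$ (the covolume of $\mathcal{L}$), and the error term $O(t^3\lambda^3)$ integrates to $O(X^{5/6}/\kappa)$ exactly as in (\ref{eqfirstpart}), producing the contribution in (\ref{eqfirstpart1}). Note that here only the total index $m$ matters, not the decomposition $m = m_1 m_2$.

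For the large-$t$ piece, I would slice by the $x^3$-coefficient $a$. The key observation is that $\mathcal{L}_a$ is nonempty only when $m_1 \mid a$, and when nonempty, $\mathcal{L}_a$ is a translate of a sublattice of index $m_2$ in $V_a \cong \Z^3$. Therefore Proposition~\ref{genbound} applied slice-by-slice produces a main volume term scaled by $1/m_2$ together with error $O(\lambda^2 t^4)$, which sums over the $O(\kappa^3/m_1)$ nonvanishing slices to $O(\kappa X^{2/3})$ (one checks the sum $\sum_{m_1\mid a,\,a\leq O(\kappa^3)} a^{-2/3}$ is bounded by $O(\kappa m_1^{-2/3})$, and the $m_1^{-2/3}$ factor is harmless). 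After the change of variable $u = at^3/\lambda$, we arrive at an expression analogous to (\ref{eqssbs}) but with the sum restricted to $m_1\mid a$ and with the overall factor $1/m_2$.

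The heart of the computation is then the Mellin-transform evaluation of the restricted sum, which is exactly (\ref{smtimp1}): shifting the contour past the poles at $s = 2/3$ and $s = 0$ produces the main term $m_1^{-1/3}\zeta(1/3)$, the subleading term $3\widetilde{\Psi}_0(-2) m_1^{-1}(\kappa^3 u)^{2/3}$, and a remainder $O(\min\{(m_1^{-1}\kappa^3 u)^{-M},1\})$. With $\kappa = X^{1/12}$, integrating the remainder over $\lambda$ and $u$ yields the claimed error $O_\epsilon(m_1^{1/3}X^{3/4+\epsilon})$. The $\widetilde{\Psi}_0(-2)$ piece carries a factor of $m_1^{-1} \cdot m_2^{-1} = m^{-1}$ and, following the cancellation in (\ref{eqmt2}), combines with the small-$t$ piece to reconstruct the full first main term $\frac{c_1^{(i)}}{m}\cdot \frac{X}{2}$. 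The $\zeta(1/3)$ piece carries a factor of $m_1^{-1/3} \cdot m_2^{-1}$, so repeating the computation from (\ref{eqsmt1}) through (\ref{eqalmostdone}) produces the second main term $(1 - 2^{-5/6})\frac{c_2^{(i)}}{m_1^{1/3}m_2} X^{5/6}$.

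The only real obstacle is bookkeeping: one must carefully distinguish between the factor of $m_1$ (which rescales the Dirichlet series $\sum a^{-s-1/3}$ by $m_1^{-s-1/3}$ and hence appears with different exponents at the two poles) and the factor of $m_2$ (which is a flat scaling of each slice volume). Keeping these separate is what produces the asymmetric denominators $m_1^{1/3}m_2$ in the second main term versus $m_1 m_2 = m$ in the first, and also explains why the error grows like $m_1^{1/3}$ rather than like $m$.
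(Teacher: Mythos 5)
Your proposal is correct and follows essentially the same route as the paper's proof: the same $\Psi/\Psi_0$ split with $\kappa = X^{1/12}$, slicing by the $x^3$-coefficient restricted to $m_1\mid a$, the Mellin-transform evaluation (\ref{smtimp1}) with its two poles, and the same recombination of the $\widetilde{\Psi}_0(-2)$ residue with the small-$t$ piece. The only slip is cosmetic: the sum $\sum_{m_1\mid a,\,a\le O(\kappa^3)}a^{-2/3}$ is $O(\kappa/m_1)$ rather than $O(\kappa m_1^{-2/3})$, but as you note this only sharpens the $O(\kappa X^{2/3})$ bound and does not affect the argument.
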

Summing over dyadic ranges of the discriminant, we also then obtain
\begin{equation}
  N(\mathcal L \cap V^{(i)}_\Z;X)=\displaystyle\frac{c_1^{(i)}}{T}{X} +
  \displaystyle{\frac{c_2^{(i)}}{T_1^{1/3}T_2}X^{5/6}}
+O\left(\frac{m}{T}X^{3/4}\right).
\end{equation}

\vspace{.1in}
\noindent
{\bf Remark 3.}
  Note that our proof shows that the analogue of Theorem~\ref{shincong}
  also holds for translates of the lattice $\mathcal L$, although 
the constant $\frac{c_2^{(i)}}{T_1^{1/3}T_2}$ would get
  replaced with something rather more complicated.  However, the error
  term would remain the same.

\section{$p$-adic densities for the second term}
Let $p$ be a fixed prime and $\sigma$ be the splitting type $(f,p)$ at
$p$ of an integral binary cubic form $f$.  The methods of the previous
section allow us to count the asymptotic number of $\GL_2(\Z)$-orbits
on $\mathcal U_p(\sigma)$ having bounded discriminant.

More precisely, let us define $\mu_1(\sigma,p)$, $\mu_2(\sigma,p)$,
$\mu_1(p)$, and $\mu_2(p)$ so that
\begin{eqnarray*}
N(\U_p(\sigma)\cap
V^{(i)}_\Z;X)&=&\mu_1(\sigma,p)c_1^{(i)}X+\mu_2(\sigma,p)c_2^{(i)}X^{5/6}
+O_\epsilon(X^{3/4+\epsilon}),\\[.0325in]
N(\U_p;X)&=&\mu_1(p)c_1^{(i)}X+\mu_2(p)c_2^{(i)}X^{5/6}+O_\epsilon(X^{3/4+\epsilon}).
\end{eqnarray*}
We similarly define $\mu'_1(p)$ and $\mu'_2(p)$ so that
\begin{eqnarray*}
N(\V_p;X)&=&\mu'_1(p)c_1^{(i)}X+\mu'_2(p)c_2^{(i)}X^{5/6}+O_\epsilon(X^{3/4+\epsilon}).
\end{eqnarray*}
The values of $\mu_1(\sigma,p)$, $\mu_1(p)$ and $\mu'_1(p)$ were computed in
Section 4 to be equal to $\mu(\U_p(\sigma))$, $\mu(\U_p)$, and $\mu(\V_p)$,
respectively.  In this section we compute the values of
$\mu_2(\sigma,p)$, $\mu_2(p)$ and
$\mu'_2(p)$ 
for all splitting types $\sigma$ and all primes $p$.  We will require
these results to prove Theorems~\ref{main1} and \ref{main2}.

From the results of Section \ref{secpden1}, we see that
$\U_p(111)=T_p(111)$, $\U_p(12)=T_p(12)$, and $\U_p(3)=T_p(3)$. For
$\sigma=(111),(12),(3)$, we write $T_p(\sigma)$ as a union of lattices
in the following way.  For $\alpha, \beta, \gamma\in\P^1_{\overline\F_p}$, let $T_p(\alpha,\beta,\gamma)$ be
the set of all elements $f\in V_\Z$ such that the reduction of $f$
modulo $p$ has roots $\alpha, \beta,$ and $\gamma$ in~$\P^1_{\overline\F_p}$. Then
\begin{eqnarray*}
  T_p(111)&=&\bigcup_{\displaystyle\alpha,\beta,\gamma\in \P^1_{\F_p}}(T_p(\alpha,\beta,\gamma)\setminus p\cdot V_\Z),\\
  T_p(12)&=&\bigcup_{\displaystyle\alpha\in \P^1_{\F_p},\beta_1,\beta_2\in\P^1_{\F_{p^2}}\backslash\P^1_{\F_p}}(T_p(\alpha,\beta_1,\beta_2)\setminus p\cdot V_\Z),\\
  T_p(3)&=&\bigcup_{\displaystyle\gamma_1,\gamma_2,\gamma_3\in\P^1_{\F_{p^3}}\backslash\P^1_{\F_p}}(T_p(\gamma_1,\gamma_2,\gamma_3)\setminus p\cdot V_\Z),
\end{eqnarray*}
where $\alpha,\beta,\gamma$ are distinct points in $\P^1_{\F_p}$, while $\beta_1,\beta_2$ are $\F_p$-conjugate points in $\P^1(\F_{p^2})$ and
$\gamma_1,\gamma_2,\gamma_3$ are $\F_p$-conjugate points in~$\P^1(\F_{p^3})$.

Similarly, the set $T_p(1^21)$ (resp.\ $T_p(1^3)$) can be written as
the union over pairs of distinct points $\alpha,\beta\in\P^1_{\F_p}$ (resp.\
points $\alpha\in\P^1_{\F_p}$) of the sets $T_p(\alpha,\alpha,\beta)$ (resp.\
$T_p(\alpha,\alpha,\alpha)$) which consist of elements $f\in V_\Z$ whose
reduction modulo $p$ has a double root at $\alpha$ and a single root
at $\beta$ (resp.\ a triple root at $\alpha$).  Furthermore, the
results of Section \ref{secpden1} imply that elements $f$ in
$T_p(\alpha,\alpha,\beta)$ or $T_p(\alpha,\alpha,\alpha)$ correspond to rings that
are non-maximal at $p$ if and only if $f(\tilde{\alpha})$ is a
multiple of $p^2$, where $\tilde\alpha$ is any element in $\Z$ whose
reduction modulo $p$ is equal to $\alpha$.

We can now compute the values of $\mu_2(\sigma,p)$ from Theorem
\ref{shincong}.  Let $\sigma=(111)$. We apply Theorem~\ref{shincong}
to the lattices $T_p(\alpha,\beta,\gamma)$ and $p\cdot V_\Z$. For the
lattice $T_p([1:0],\beta,\gamma)$ we have $T_1=p$ and $T_2=p^2$ in the
notation of Theorem \ref{shincong}.  Therefore
$$N(T_p([1:0],\beta,\gamma);X)=\displaystyle\frac{c_1^{(i)}}{p^3}{X} +
\displaystyle{\frac{c_2^{(i)}}{p^{7/3}}X^{5/6}+O_{\epsilon}(X^{3/4+\epsilon})}.$$
For the lattice $T_p(\alpha,\beta,\gamma)$, where none of $\alpha,\beta,$ and
$\gamma$ are equal to $[1:0]\in\P^1_{\F_p}$, we have $T_1=1$ and $T_2=p^3$.
Therefore
$$N(T_p(\alpha,\beta,\gamma);X)=\displaystyle\frac{c_1^{(i)}}{p^3}{X} +
\displaystyle\frac{c_2^{(i)}}{p^{3}}X^{5/6}+O_{\epsilon}(X^{3/4+\epsilon}).$$
Finally for the lattice $p\cdot V_\Z$ we have $T_1=p$ and $T_2=p^3$.
Therefore,
$$N(p\cdot V_\Z;X)=\displaystyle\frac{c_1^{(i)}}{p^4}{X} +
\displaystyle\frac{c_2^{(i)}}{p^{10/3}}X^{5/6}+O_{\epsilon}(X^{3/4+\epsilon}).$$
There are ${p\choose2}$ lattices $T_p([1:0],\beta,\gamma)$ and $p\choose
{3}$ lattices $T_p(\alpha,\beta,\gamma)$ where none of $\alpha,\beta,$ and
$\gamma$ are equal to $[1:0]$.  Thus we have
$$\mu_2((111),p)={p\choose2}(p^{-7/3}-p^{-10/3})+{p\choose3}(p^{-3}-p^{-10/3}).$$

Consider now the splitting type $\sigma=(12)$. Following the above
notation, we have $(T_1,T_2)=(p,p^2)$ for the lattice
$T_p([1:0],\beta_1,\beta_2)$ and $(T_1,T_2)=(1,p^3)$ for
$T_p(\alpha,\beta_1,\beta_2)$ when $\alpha\neq [1:0]$. Since we have
$(p^2-p)/2$ choices for the $\F_p$-conjugate points $\beta_1$ and
$\beta_2$, we have
$$\mu_2((12),p)=\displaystyle\frac{p^2-p}2\left(p(p^{-3}-p^{-10/3})+(p^{-7/3}-p^{-10/3})\right).$$

For $\F_p$-conjugate points
$\gamma_1,\gamma_2,\gamma_3\in\P^1(\F_{p^3})$, the lattice
$T_p(\gamma_1,\gamma_2,\gamma_3)$ has $(T_1,T_2)=(1,p^3)$. Since there
are $(p^3-p)/3$ such triples $(\gamma_1,\gamma_2,\gamma_3)$, we see that
$$\mu_2((3),p)=\displaystyle\frac{p^3-p}3(p^{-3}-p^{-10/3}).$$

When $\sigma=(1^21)$, the situation is slightly more complicated. The
lattice $T_p(\alpha,\alpha,\beta)$ has $(T_1,T_2)=(p,p^2)$ when
$\alpha$ or $\beta$ equals $[1:0]$, and has $(T_1,T_2)=(1,p^3)$
otherwise. To account for the fact that an element $f$ in
$T_p(\alpha,\alpha,\beta)$ corresponds to a ring that is maximal at
$p$ if and only if $f(\tilde{\alpha})$ (where $\tilde{\alpha}$ is an
integer whose reduction modulo $p$ is $\alpha$) is not a multiple of
$p^2$, we must multiply the density of each lattice
$T_p(\alpha,\alpha,\beta)$ by $1-p^{-1/3}$ if $\alpha=[1:0]$ and by
$1-p^{-1}$ if $\alpha\neq[1:0]$. Therefore,
$$\mu_2((1^21),p)=\displaystyle p(p^{-7/3}-p^{-10/3})(1-p^{-1/3})+\bigl(p(p^{-7/3}-p^{-10/3})+p(p-1)(p^{-3}-p^{-10/3})\bigr)(1-p^{-1}).$$

Finally, let $\sigma$ equal $(1^3)$. The lattice
$T_p(\alpha,\alpha,\alpha)$ has $(T_1,T_2)=(p,p^2)$ when
$\alpha=[1:0]$ and $(T_1,T_2)=(1,p^3)$ otherwise.
Therefore, as before,
$$\mu_2((1^3),p)=\displaystyle(p^{-7/3}-p^{-10/3})(1-p^{-1/3})+p(p^{-3}-p^{-10/3})(1-p^{-1}).$$

We list the values of $\mu_1(\sigma,p)$ and $\mu_2(\sigma,p)$ in Table \ref{tab1}.

\renewcommand{\arraystretch}{1.5}

\begin{table}[ht]
\centering
\begin{tabular}{|c | c| c|}
\hline
$\sigma$&$\mu_1(\sigma,p)$&$\mu_2(\sigma,p)$\\[2.1pt]
\hline\hline

$(111)$&$\frac{1}{6}\,(p-1)^2\;p\;(p+1)\,/\,p^{4}$&$p^{-3}\left( {p\choose {3}} (1-p^{-1/3})+\frac{p(p-1)}2(p-1)p^{-1/3}\right)$\\[2.1pt] 
\hline
$(12)$&$\frac{1}{2}\,(p-1)^2\;p\;(p+1)\,/\, p^{4}$&$p^{-3}\left( p\bigl(\frac{p^2-p}2\bigr) (1-p^{-1/3})+\frac{p^2-p}2(p-1)p^{-1/3}\right)$\\[2.1pt] 
\hline
$(3)$&$\frac{1}{3}\,(p-1)^2\;p\;(p+1)\,/\, p^{4} $&$p^{-3}\left(\bigl(\frac{p^3-p}3\bigr)(1-p^{-1/3})\right)$\\[2.1pt] 
\hline
$(1^21)$&$(p-1)^2\;(p+1)\,/\, p^{4} $&$p^{-3}\left( p(p-1)\bigr(1-p^{-1}\bigr)+p(p-1)(1-p^{-1/3})p^{-1/3}\right)$\\[2.1pt]
\hline
$(1^3)$&$(p-1)^2\;(p+1)\,/\,p^{5}$&$p^{-3}\left( p(1-p^{-1/3})\bigl(1-p^{-1}\bigr)+(p-1)(1-p^{-1/3})p^{-1/3}\right)$\\[2.21pt]
\hline
\end{tabular}
\caption{Values of $p$-adic densities for splitting types}\label{tab1}
\end{table}

Adding up the values of the $\mu_1(\sigma,p)$ and the
$\mu_2(\sigma,p)$, both over all $\sigma$ and over all $\sigma\neq (1^3)$, we obtain the following lemma.
\begin{lemma}
We have:
\begin{equation}\label{smtdenmax}
\begin{array}{rclccl}
\mu_1(p)&=&\displaystyle{\left(1-\frac1{p^2}\right)\left(1-\frac1{p^{3}}\right),}\qquad &\mu'_1(p)&=&\displaystyle{\left(1-\frac1{p^2}\right)^2,}\vspace{.1in}\\
\mu_2(p)&=&\displaystyle{\left(1-\frac1{p^2}\right)\left(1-\frac1{p^{5/3}}\right),}\qquad &\mu'_2(p)&=&\displaystyle{\left(1-\frac1{p^2}\right)\left(1-\frac{p^{1/3}+1}{p(p+1)}\right).}
\end{array}
\end{equation}
\end{lemma}

\section{Proofs of the main terms of Theorems~\ref{DHth1}--\ref{gensigmafmt}}

In this section, we use the results of Sections 1--5 to complete the
proofs of the main terms of Theorems~1--8.

We have already proven the main term (indeed even the second main
term) of Theorems \ref{bcfcount1} and \ref{ringwithres}, which give
counts for the number of isomorphism classes of integral binary cubic
forms and cubic orders, respectively, having bounded discriminant. In
fact, Theorem \ref{cong} gives the main term for the count of integral binary
cubic forms satisfying any specified finite set of congruence
conditions.

We recall from Section 3, however, that the set of elements in
$V_\Z$ corresponding to maximal orders is defined by infinitely many
congruence conditions. Similarly, we show in Section 8.1 that the
count in Theorem \ref{DHth2} of $3$-torsion elements in class groups
of quadratic fields is equal to the count of integer binary cubic
forms in another set that too is defined by infinitely many
congruence conditions. To prove that (\ref{ramanujan}) still holds for
such sets, we require a uniform estimate on the error term when
only finitely many factors are taken in (\ref{ramanujan}). This
uniformity estimate is proven in Section~8.2.

In Sections~8.3, 8.4, and 8.5, we then carry out a sieve, using this
uniformity estimate, to prove Theorems~\ref{DHth1}, \ref{gensigmafmt},
and \ref{DHth2} which imply the first main terms of Theorems
\ref{main1}, \ref{gensigma1}, and \ref{main2}, respectively.

\subsection{Cubic fields with no totally ramified primes}

To prove Theorem \ref{DHth2}, we consider those cubic fields in which no prime
is totally ramified.  The significance of being ``nowhere totally
ramified'' is as follows.  Given an $S_3$-cubic field $K_3$, let
$K_{6}$ denote its Galois closure.  Let $K_2$ denote the quadratic field
contained in $K_{6}$ (the ``quadratic resolvent field'').  Then one
checks that the Galois cubic extension $K_6/K_2$ is unramified
precisely when the cubic field $K_3$ is nowhere totally ramified.
Conversely, if $K_2$ is a quadratic field, and $K_6$ is any unramified
cubic extension of $K_2$, then as an extension of the base field $\Q$,
the field $K_6$ is Galois with Galois group $S_3$, and any cubic
subfield $K_3$ of $K_6$ is then nowhere totally ramified.

\subsection{A uniformity estimate}

As in Section~4, let us denote by $\mathcal V_p$ the set of all $f\in
V_\Z$ corresponding to cubic rings $R$ that are maximal at $p$ and in
which $p$ is not totally ramified.  Furthermore, let $\mathcal
Z_p=V_\Z-\mathcal V_p$ (thus $\mathcal Z_p$ consists of those binary
cubic forms whose discriminants are not fundamental).  In order to
apply a simple sieve to obtain Theorems \ref{DHth1}, \ref{DHth2}, and \ref{gensigmafmt}, we require the following proposition:

\begin{proposition}\label{errorestimate}
$N(\mathcal Z_p;X) = O(X/p^2)$,
where the implied constant is independent of $p$.
\end{proposition}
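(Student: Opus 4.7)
The plan is to decompose $\mathcal{Z}_p$ via the Delone--Faddeev correspondence (Theorem~\ref{df}) into two classes of cubic orders $R$ with $v_p(\Disc(R))\ge 2$: Case~(i), where $R$ is $p$-maximal and $R\otimes\Z_p$ is totally ramified (so the corresponding form $f$ lies in $\U_p(1^3)$); and Case~(ii), where $R$ is non-maximal at $p$, so that $R\subsetneq R'$ for the unique $p$-maximal overring $R'$, with $[R':R]=p^j$ for some $j\ge 1$ and $|\Disc(R)|=p^{2j}|\Disc(R')|$.

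For Case~(ii), the key ingredient is a uniform subring-counting bound: any cubic ring has at most $3$ subrings of index $p$. To see this, each index-$p$ sublattice $L$ of $R'/\Z\cong\Z^2$ corresponds to a point $k\in\P^1(\F_p)$, and a direct computation using the multiplication table~(\ref{ringlaw3}) of $R'$ shows that $\Z+L$ is closed under multiplication if and only if $k$ is a root in $\P^1(\F_p)$ of the cubic $dT^3+cT^2+bT+a\pmod p$, which has at most $3$ roots. Iterating along chains of index-$p$ subring extensions, any cubic ring has at most $3^j$ subrings of index $p^j$. Combined with the uniform count $O(Y)$ for cubic orders with $|\Disc|\le Y$ (Theorem~\ref{ringwithres}),
\[
\#(\text{Case (ii)})\;\le\;\sum_{j\ge 1}3^j\cdot O(X/p^{2j})\;=\;O(X)\sum_{j\ge 1}(3/p^2)^j\;\ll\;X/p^2,
\]
uniformly for $p\ge 2$, since the geometric series converges absolutely ($3<p^2$) with $\sum_{j\ge 1}(3/p^2)^j=3/(p^2-3)$ bounded by $12/p^2$ for all $p\ge 2$.

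For Case~(i), the forms lie in $\U_p(1^3)$, of $p$-adic density $(p-1)^2(p+1)/p^5\sim 1/p^2$ by Lemma~\ref{udensities} and defined by congruence conditions modulo $p^2$. The uniform estimate~(\ref{sestimate3}) yields $O(X/p^2)+O(X^{5/6})$, which is $O(X/p^2)$ for $p\le X^{1/12}$; moreover, the set is empty whenever $p^2>X$, since $v_p(\Disc(R))\ge 2$ forces $|\Disc(R)|\ge p^2$. For the intermediate range $X^{1/12}<p\le\sqrt X$, I pass to the global maximal overring $R^{\max}\supset R$ (which has $[R^{\max}:R]$ coprime to $p$ since $R$ is $p$-maximal, and is itself totally ramified at $p$, so $|\Disc(R^{\max})|\ge p^2$), and apply the Case~(ii) subring-counting bound multiplicatively over primes $q\ne p$, combined with the Davenport--Heilbronn main-term count of totally-ramified-at-$p$ maximal cubic orders with $|\Disc|<X$. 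The principal obstacle lies precisely in Case~(i) for this intermediate range, where neither the averaging error nor the triviality bound alone is sharp enough; the combination with subring-counting requires careful accounting of the divisor-type sums that arise in order to avoid a logarithmic loss and recover the clean bound $O(X/p^2)$.
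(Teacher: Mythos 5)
Your decomposition into Case~(i) (maximal but totally ramified at $p$) and Case~(ii) (non-maximal at $p$) matches the paper's partition of $\mathcal Z_p$ into $\mathcal Y_p$ and $\mathcal W_p$, and you are reaching for the same key lemma (the ``at most $3$ subrings of index $p$'' count, which is the paper's Lemma~\ref{atmost3}). However, both halves of your argument have genuine gaps.

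In Case~(ii), the bound ``any cubic ring has at most $3$ subrings of index $p$'' is false without a primitivity hypothesis: if $p\mid\ct(R')$, then the associated cubic $dT^3+cT^2+bT+a$ is identically zero $\pmod p$, so \emph{every} point of $\P^1(\F_p)$ is a root and there are $p+1$ index-$p$ subrings, not $3$. This already invalidates the iterated bound $3^j$ on index-$p^j$ subrings, because a chain of index-$p$ steps from a primitive $R'$ will in general pass through non-primitive intermediate rings. More fundamentally, even with a correct subring count, the strategy of passing all the way up to the $p$-maximal overring $R'$ and summing over $j$ does not yield $O(X/p^2)$: the maximal number of index-$p^j$ subrings of a maximal $\Z_p$-algebra grows polynomially in $p$ (roughly like $p^{\lfloor j/2\rfloor}$), and $\sum_j p^{j/2}\cdot O(X/p^{2j})$ only gives $O(X/p^{3/2})$. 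The paper avoids both problems by not iterating at all: it uses Lemma~\ref{nonmax} to produce an overring $R'$ of index exactly $p$ (or $R''$ of index $p^2$), and then handles the non-primitive case of $R'$ by observing that $R'=\Z+pS$ forces $\Disc(S)<X/p^6$, so the $p+1$ subrings per such $R'$ are more than compensated by the scarcity of non-primitive overrings. This one- or two-step accounting, with the separate discriminant bookkeeping for the primitive and non-primitive cases, is what gives the clean $O(X/p^2)$; the geometric-series-over-levels argument you propose does not close.

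In Case~(i), you correctly identify that the congruence-density estimate~(\ref{sestimate3}) only yields $O(X/p^2)$ for $p\leq X^{1/12}$, but your treatment of the intermediate range $X^{1/12}<p\leq\sqrt X$ is left as an acknowledged ``principal obstacle'' without a completed argument — the passage to the global maximal overring and the subring-counting you gesture at is exactly where the difficulty lives, and you do not resolve it. The paper handles $\mathcal Y_p$ by a different route entirely: it invokes class field theory (the argument is attributed to Datskovsky--Wright, \cite[p.~15]{DW}) to bound the number of maximal cubic orders totally ramified at $p$, uniformly in $p$. Without either the class-field-theoretic input or a completed analytic argument in the middle range, this part of the proposal is incomplete.
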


\begin{proof}
  The set $\mathcal Z_p$ may be naturally partitioned into two
  subsets: $\mathcal W_p$, the set of forms $f\in V_\Z$
  corresponding to cubic rings not maximal at $p$; and $\mathcal
  Y_p$, the set of forms $f\in V_\Z$ corresponding to cubic
  rings that are maximal at $p$ but also totally ramified at $p$.

  We first treat $\mathcal W_p$.  Recall that the {\it content} $\ct(R)$
  of a cubic ring $R$ is defined as the maximal integer $n$ such that
  $R=\Z+nR'$ for some cubic ring $R'$.  It follows from
  (\ref{ringlaw3}) that the content of $R$ is simply the content
  (i.e., the greatest common divisor of the coefficients) of the
  corresponding binary cubic form $f$.  We say $R$ is {\it primitive}
  if $\ct(R)=1$, and $R$ is {\it primitive at $p$} if $\ct(R)$ is not a
  multiple of $p$.
The following lemma follows immediately from Proposition~\ref{subring}.

\begin{lemma}\label{atmost3}
  Suppose $R$ is a cubic ring that is primitive at $p$.  Then the
  number of subrings of index $p$ in $R$ is at most $3$.
\end{lemma}


To prove the proposition, suppose $R$ is a cubic ring of absolute
discriminant less than $X$ that is not maximal at $p$.  By
Lemma~\ref{nonmax}, the cubic ring $R$ has a $\Z$-basis
$\langle1,\omega,\theta\rangle$ such that either (i)
$R'=\Z+\Z\cdot(\omega/p)+\Z\cdot\theta$ forms a cubic ring, or (ii)
$R''=\Z+\Z\cdot(\omega/p)+\Z\cdot(\theta/p)$ forms a cubic ring.

Assume we are in case (i), i.e., $R'$ is a ring. If $R'$ is primitive
at $p$, then we have that $\Disc(R')=\Disc(R)/p^2<X/p^2$; thus the
total number of possible rings $R'$ that can arise is $O(X/p^2)$ by
Theorem~\ref{ringwithres}.  By Lemma~\ref{atmost3}, the number of $R$
that can correspond to such $R'$ is at most three times that, which is
also $O(X/p^2)$.  On the other hand, if $R'$ is not primitive at $p$,
then let $S$ be the ring such that $R'=\Z+pS$.  Then
$\Disc(S)=\Disc(R)/p^6<X/p^6$, so the number of possibilities for $S$
is $O(X/p^6)$, which is thus the number of possibilities for $R'$
(since $R'=\Z+pS$).  The number of possibilities for $R$ is then $p+1$
(the number of index $p$ submodules of a free $\Z$-module of rank 2) times the
number of possibilities for $R'$, yielding $O((p+1)X/p^6)$
possibilites.  We conclude that in case (i), the number of
possibilities for $R$ is $O(X/p^2)+O((p+1)X/p^6)=O(X/p^2)$.

Assume we are now in case (ii), i.e., $R''$ is a ring.  Then
$R=\Z+pR''$ where $\Disc(R'')=\Disc(R)/p^4<X/p^4$.  The number of
possible $R''$ in this case is $O(X/p^4)$ by
Theorem~\ref{ringwithres}, and so the number of possible cubic rings
$R=\Z+pR''$ arising from case (ii) is $O(X/p^4)$.
Thus the total number $N(\mathcal W_p;X)$ of cubic rings $R$ that are
not maximal at $p$ and have absolute discriminant less than $X$ is
$O(X/p^2)+O(X/p^4)=O(X/p^2)$, as desired.


Finally, that $N(\mathcal Y_p;X)=O(X/p^2)$ follows easily from
class field theory.  A nice, short exposition of this may be found in,
e.g., \cite[p.\ 15]{DW}.
\end{proof}



%
%
%
%

\subsection{Density of discriminants of cubic fields (Proof of Theorem~1)}



We may now prove Theorem~\ref{DHth1}.
Let $\mathcal U=\cap_p \mathcal U_p$.  Then $\mathcal U$
is the set of $v\in V_\Z$ corresponding to maximal cubic rings
$R$.
By Lemma~\ref{uvdensity}, the $p$-adic density of $\U_p$ is given by
$\mu(\mathcal U_p)= (1-p^{-2})(1-p^{-3})$.
Suppose $Y$ is any positive integer.  It follows from
(\ref{ramanujan}) that
\[\lim_{X\rightarrow\infty} \frac{N(\cap_{p<Y}\mathcal U_p\cap V^{(i)}_\Z ;X)}{X}
= \frac{\pi^2}{12n_i}
    \prod_{p<Y}[(1-p^{-2})(1-p^{-3})].
\]
Letting $Y$ tend to $\infty$, we obtain immediately that
\[
\displaystyle{\limsup_{X\rightarrow\infty} \frac{N(\mathcal
    U\cap V^{(i)}_\Z;X)}{X}}
  \leq \displaystyle{\frac{\pi^2}{12n_i}
    \prod_p
    [(1-p^{-2})(1-p^{-3})]}
=\frac{1}{2n_i\zeta(3)}.
\]
To obtain a lower bound for $N(\mathcal U\cap V^{(i)}_\Z;X)$, we note that
\[\bigcap_{p<Y} \mathcal U_p \subset
(\mathcal U \cup \bigcup_{p\geq Y}\mathcal W_p).\]
Hence by Proposition~\ref{errorestimate},
\[
\lim_{X\rightarrow\infty}
\frac{N(\U\cap V^{(i)}_\Z;X)}{X}\geq\frac{\pi^2}{12n_i}
   \prod_{p<Y}[(1-p^{-2})(1-p^{-3})]
      - O(\sum_{p\geq Y} p^{-2}).
\]
Letting $Y$ tend to infinity completes the proof.

We note that the same arguments also apply when counting cubic fields with
specified local behavior at finitely many primes.


\subsection{A simultaneous generalization (Proof of Theorem~\ref{gensigmafmt})}

We now prove Theorem~\ref{gensigmafmt}, which gives the
density of discriminants of cubic orders or fields satisfying any
finite number (or in many natural cases, an infinite number) of local
conditions. To this end, for each prime~$p$ let $\Sigma_p$ be a
set of isomorphism classes of nondegenerate cubic rings over $\Z_p$.
(By {\it nondegenerate}, we mean having nonzero discriminant over
$\Z_p$, so that it can arise as $R\otimes\Z_p$ for some cubic order
$R$ over $\Z$.)  We denote the collection $(\Sigma_p)$ of these local
specifications over all primes $p$ by $\Sigma$.  We say that the
collection $\Sigma=(\Sigma_p)$ is {\it acceptable} if, for all
sufficiently large $p$, the set $\Sigma_p$ contains at least the
{maximal} cubic rings over $\Z_p$ that are not totally ramified at
$p$.

For a cubic order $R$ over $\Z$, we write ``$R\in\Sigma$'' (or say
that ``$R$ is a $\Sigma$-order'') if $R\otimes\Z_p\in\Sigma_p$ for all~$p$. We wish to determine the number of $\Sigma$-orders $R$ of bounded
discriminant, for any acceptable collection $\Sigma$ of local
specifications.

To this end, fix an acceptable $\Sigma=(\Sigma_p)$ of local
specifications, and also fix any $i\in\{0,1\}$.  Let $S=S(\Sigma,i)$
denote the set of all irreducible $f\in V_\Z^{(i)}$ such that the
corresponding cubic ring $R(f)\in\Sigma$.  Then the number of
$\Sigma$-orders with discriminant at most $X$ is given by $N(S;X)$.
We prove the following asymptotics for $N(S;X)$.

\begin{theorem}\label{gensigma2}We have
$\displaystyle{\lim_{X\to\infty}\frac{N(S(\Sigma,i);X)}{X}\,=\,\frac1{2n_i}
\prod_p\Bigl(\frac{p-1}{p}\cdot \sum_{R\in\Sigma_p}
\frac{1}{\Disc_p(R)}\cdot\frac1{|\Aut(R)|}\Bigr).}$
\end{theorem}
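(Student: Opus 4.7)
Via the Delone--Faddeev correspondence applied over $\Z_p$, the condition $R(f)\otimes\Z_p\in\Sigma_p$ depends only on the $\GL_2(\Z_p)$-orbit of $f$ in $V_{\Z_p}$. Writing $S_p\subset V_{\Z_p}$ for the $\GL_2(\Z_p)$-invariant subset $\bigsqcup_{R\in\Sigma_p}\{f:R(f)\cong R\}$, we therefore have $S(\Sigma,i) = V_\Z^{(i),\irr}\cap\bigcap_p S_p$. The plan has three steps: (i) compute the $p$-adic density $\mu_p(S_p)$ as an orbital sum; (ii) handle finitely many primes at a time via Theorem~\ref{cong}; and (iii) push to all primes using the uniform bound of Proposition~\ref{errorestimate}.

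For step (i), I would carry out the $p$-adic analog of the Jacobian computation underlying Proposition~\ref{volumes}. The stabilizer in $\GL_2(\Z_p)$ of a binary cubic form $f$ coincides with $\Aut(R(f))$ (the Section~2 argument transports verbatim to $\Z_p$), and under the twisted action $\Disc$ transforms by $\det(\gamma)^2$, so $|\Disc|_p$ is constant on $\GL_2(\Z_p)$-orbits. Comparing the $\GL_2$-invariant measure $|\Disc(v)|^{-1}dv$ on $V$ with Haar measure on $\GL_2(\Z_p)$, and normalizing so that $\mu_p(V_{\Z_p})=1$, one finds
$$\mu_p\bigl(\{f\in V_{\Z_p}:R(f)\cong R\}\bigr)\,=\,\frac{(p-1)^2(p+1)}{p^3\cdot\Disc_p(R)\cdot|\Aut(R)|}.$$
Consistency check: for $R=\Z_p\times\Z_p\times\Z_p$ (so $\Disc_p=1$, $|\Aut|=6$) this reproduces $\mu(\U_p(111))=(p-1)^2(p+1)/(6p^3)$ from Lemma~\ref{udensities}. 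Summing over $R\in\Sigma_p$ and using $\prod_p(1-p^{-2})=6/\pi^2$ converts
$$\frac{\pi^2}{12n_i}\prod_p\mu_p(S_p)\,=\,\frac{1}{2n_i}\prod_p\frac{p-1}{p}\sum_{R\in\Sigma_p}\frac{1}{\Disc_p(R)\cdot|\Aut(R)|},$$
which is precisely the right-hand side of Theorem~\ref{gensigma2}.

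For step (ii), fix $Y$ and set $S^{(Y)}=V_\Z^{(i),\irr}\cap\bigcap_{p<Y}S_p$. When counting $f\in S^{(Y)}$ with $|\Disc(f)|<X$, only those $R\in\Sigma_p$ with $\Disc_p(R)\leq X$ can contribute; and each such $\GL_2(\Z_p)$-orbit is cut out by congruences modulo a fixed power of $p$. Hence $S^{(Y)}\cap\{|\Disc|<X\}$ is defined by finitely many congruence conditions, and Theorem~\ref{cong} applies to give
$$\lim_{X\to\infty}\frac{N(S^{(Y)};X)}{X}\,=\,\frac{\pi^2}{12n_i}\prod_{p<Y}\mu_p(S_p),$$
which upper-bounds $\limsup_X N(S(\Sigma,i);X)/X$. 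Letting $Y\to\infty$ yields the claimed upper bound. For the matching lower bound (step (iii)), acceptability ensures that for all sufficiently large $p$, $\Sigma_p$ contains every maximal cubic $\Z_p$-order that is not totally ramified at $p$. Consequently, once $Y$ exceeds the finite set of exceptional primes, $S^{(Y)}\setminus S(\Sigma,i)\subset\bigcup_{p\geq Y}\mathcal Z_p$ in the notation of Section~8.2, and Proposition~\ref{errorestimate} bounds its contribution by $O(X\sum_{p\geq Y}p^{-2})=o_Y(X)$.

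The main obstacle is step (i): performing the orbital density computation cleanly, ensuring both the identification $\Stab_{\GL_2(\Z_p)}(f)\cong\Aut(R(f))$ and the correct Jacobian constant $(p-1)^2(p+1)/p^3$ in the chosen normalization. Once this is in place, steps (ii)--(iii) adapt the sieve of Section~8.3 essentially verbatim, with the uniformity Proposition~\ref{errorestimate} serving the same role as in the proof of Theorem~\ref{main1}.
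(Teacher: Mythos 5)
Your proposal is correct and follows essentially the same route as the paper: the paper's Lemma~\ref{ramanujan11} computes $\mu_p(S(\Sigma,i))=\frac{\#\GL_2(\F_p)}{p^4}\sum_{R\in\Sigma_p}\frac{1}{\Disc_p(R)|\Aut(R)|}$ via the same $p$-adic Jacobian/orbital-mass argument you describe (your factor $(p-1)^2(p+1)/p^3$ equals $\#\GL_2(\F_p)/p^4$), and the paper then invokes the Theorem~1 sieve using Proposition~\ref{errorestimate} exactly as in your steps (ii)--(iii). One small point: in step (ii) the truncation to $|\Disc|<X$ doesn't literally turn $S^{(Y)}$ into a set of the form covered by Theorem~\ref{cong} (which is about a fixed set as $X\to\infty$); you should instead truncate $\Sigma_p$ to its finitely many members of bounded $p$-adic discriminant and let that bound tend to infinity, but this is a cosmetic rearrangement and the paper elides the same detail.
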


Although $S=S(\Sigma,i)$ might again be defined by infinitely many congruence
conditions, the estimate provided in Proposition~\ref{errorestimate}
(and the fact that $\Sigma$ is acceptable) shows that equation
(\ref{ramanujan}) continues to hold for the set $S$; the argument is
identical to that in the proof of Theorem~1.

We now evaluate $\mu_p(S)$ in terms of the cubic rings lying in $\Sigma_p$.

\begin{lemma}\label{ramanujan11}
We have
  $$\displaystyle{\mu_p(S(\Sigma,i)) = \frac{\#\GL_2(\F_p)}{p^4}\cdot
\sum_{R\in\Sigma_p} \frac{1}{\Disc_p(R)}\cdot\frac1{|\Aut(R)|}.}$$
\end{lemma}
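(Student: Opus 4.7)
The plan is to decompose the $p$-adic set underlying $S(\Sigma,i)$ into $\GL_2(\Z_p)$-orbits and compute the measure of each orbit via a Jacobian calculation. First, I would note that the condition $R(f)\otimes\Z_p\in\Sigma_p$ depends only on the $\GL_2(\Z_p)$-orbit of $f$ in $V_{\Z_p}$, since the isomorphism class of $R(f)\otimes\Z_p$ does. The Delone--Faddeev correspondence and the identification of automorphism groups with form-stabilizers (both established in Section~2 over $\Z$) apply verbatim over $\Z_p$, so the nondegenerate $\GL_2(\Z_p)$-orbits in $V_{\Z_p}$ are in bijection with isomorphism classes $R$ of nondegenerate cubic rings over $\Z_p$, with $\Stab_{\GL_2(\Z_p)}(f_R)=\Aut(R)$ for any representative $f_R$ of the orbit $O_R:=\GL_2(\Z_p)\cdot f_R$. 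By additivity,
\[
\mu_p(S(\Sigma,i)) = \sum_{R\in\Sigma_p}\mu_p(O_R),
\]
so the lemma reduces to the single-orbit identity $\mu_p(O_R)=\#\GL_2(\F_p)\big/\bigl(p^4\cdot\Disc_p(R)\cdot|\Aut(R)|\bigr)$.

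Next I would compute $\mu_p(O_R)$ via the orbit map $\pi_R\colon\GL_2(\Z_p)\to O_R$, $g\mapsto g\cdot f_R$, which is surjective with every fiber of size $|\Aut(R)|$. The $p$-adic change-of-variables formula gives
\[
|\Aut(R)|\cdot\mu_p(O_R) = \int_{\GL_2(\Z_p)}|\det d\pi_R(g)|_p\,dg,
\]
so the crux is to identify this Jacobian. Because $X\mapsto X\cdot f$ is bilinear in $(X,f)$, the quantity $\det d\pi_R(1)$ is a polynomial of degree $4$ in the coefficients of $f_R$; equivariance of $\pi_R$ combined with the fact that the twisted $\GL_2$-action on $V$ has linear determinant $\det(\gamma)^2$ (the natural $\Sym^3$-action contributes $\det(\gamma)^6$, and the twist $f\mapsto\det(\gamma)^{-1}(f\circ\gamma)$ divides out $\det(\gamma)^4$ since $\dim V=4$) forces $\det d\pi_R$ to be a $\GL_2$-semi-invariant of weight $\det^2$. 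Classical invariant theory identifies $\Disc$ as the essentially unique polynomial of degree $4$ and weight $\det^2$, so $\det d\pi_R(g)=c\cdot\Disc(g\cdot f_R)$ for a universal integer $c$; evaluating at $f=x^2y-xy^2$, where $\Disc=1$ and a direct $4\times 4$ Lie-algebra computation of $X\mapsto X\cdot f$ yields determinant $1$, pins down $c=1$.

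With this identity, $|\det d\pi_R(g)|_p=|\Disc(g\cdot f_R)|_p=1/\Disc_p(R)$ is constant on $\GL_2(\Z_p)$, so
\[
|\Aut(R)|\cdot\mu_p(O_R) = \frac{\Vol(\GL_2(\Z_p))}{\Disc_p(R)} = \frac{\#\GL_2(\F_p)/p^4}{\Disc_p(R)},
\]
where the volume is computed using the additive Haar measure normalized so $\mu_p(V_{\Z_p})=1$ (under which $\GL_2(\Z_p)\subset M_2(\Z_p)$ has measure $\#\GL_2(\F_p)/p^4$). Summing over $R\in\Sigma_p$ completes the proof. The main obstacle is the Jacobian identification $\det d\pi_R\equiv\Disc$: proportionality is forced by representation theory, but nailing down the constant $c=1$ rests on the one explicit Lie-algebra computation above. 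As a consistency check, applying the formula to $\Sigma_p=\{\Z_p\oplus\Z_p\oplus\Z_p\}$ (so $|\Aut(R)|=6$, $\Disc_p(R)=1$, and $O_R=T_p(111)$) predicts $\mu_p(T_p(111))=(p-1)^2(p+1)/(6p^3)$, which agrees with the value computed directly in Section~\ref{secpden1}.
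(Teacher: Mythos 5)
Your proposal is correct and follows essentially the same route as the paper: both decompose the $p$-adic closure of $S(\Sigma,i)$ into $\GL_2(\Z_p)$-orbits using the Delone--Faddeev correspondence over $\Z_p$, identify $\Aut(R)$ with the orbit stabilizer, and reduce the computation to a Jacobian calculation; the paper states the result of the Jacobian calculation (that the normalization constant is $p^{-4}$) rather than carrying it out, while you supply an invariant-theoretic argument plus an explicit check. One small imprecision: the Jacobian $\det d\pi_R(g)$ is in fact \emph{constant} in $g$, equal to $c\cdot\Disc(f_R)$ (the factors $\det\rho(g)=\det(g)^2$ from the target and $\det(Y\mapsto g^{-1}Y)=\det(g)^{-2}$ from translating the tangent space cancel), rather than $c\cdot\Disc(g\cdot f_R)$ as you wrote --- but since $|\det g|_p=1$ on $\GL_2(\Z_p)$ the two expressions have the same $p$-adic absolute value and your final formula is unaffected.
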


\begin{proof}
  The proof of Theorem~\ref{df}, with $\Z_p$ in place of $\Z$, shows that
  for any cubic $\Z_p$-algebra $R$ there is a unique element
  $v\in V_{\Z_p}$ up to $\GL_2(\Z_p)$-equivalence satisfying
  $R_{\Z_p}(v)=R$.
  Moreover, the automorphism group of such a cubic $\Z_p$-algebra $R$
  is simply the size of the stabilizer in $\GL_2(\Z_p)$ of the
  corresponding element $v\in V_{\Z_p}$ (cf.\ Prop.\ \ref{aut}).

  We normalize the Haar measure $dg$ on the $p$-adic group $\GL_2(\Z_p)$ so
  that $\int_{g\in \GL_2(\Z_p)}dg=\#\GL_2(\F_p)$.  Since
  $|\Disc(x)|_p^{-1}\cdot dx$ is a $\GL_2(\Q_p)$-invariant measure on
  $V_{\Z_p}$, we must have for any cubic $\Z_p$-algebra $R=R(v_0)$
that
\[ \int_{{x\in V_{\Z_p}}\atop{R(x)=R}} dx
=c\cdot\int_{g\in \GL_2(\Z_p)/\Stab(v_0)} |\Disc(gv_0)|_p\cdot dg
=c\cdot\frac{|\Disc(R)|_p\cdot \#\GL_2(\F_p)}
{\#\Aut_{\Z_p}(R)},
\]
for some constant $c$.
A Jacobian calculation using an indeterminate $v_0$ satisfying
$\Disc(v_0)\neq 0$ shows that $c=p^{-4}$, independent of $v_0$.  The
lemma follows.
\end{proof}

Finally, we observe that $\#\GL_2(\F_p)=(p^2-1)(p^2-p)$, and so
\[\frac{\pi^2}{12n_i}\prod_p\mu_p(S(\Sigma,i)) =
\frac{\pi^2}{12n_i}\prod_p \Bigl(1-\frac1{p^2}\Bigr)
\Bigl(\frac{p-1}{p}\Bigr)\cdot \sum_{R\in\Sigma_p}
\frac{1}{\Disc_p(R)}\cdot\frac1{|\Aut(R)|},\] proving
Theorem~\ref{gensigma2}.  Noting that $n_1=\Aut_\R(\R^3)$ and
$n_2=\Aut_\R(\R\oplus\C)$ then yields Theorem~\ref{gensigmafmt}.

\vspace{.1in}
\noindent
{\bf Remark 4.}
Lemma~\ref{ramanujan11}, together with the identities
$\mu_p(V_{\Z_p})=1$ and $\mu_p(\mathcal U_p)=(p^3-1)(p^2-1)/p^5$ of
Lemma~\ref{uvdensity}, give the interesting formulae
\begin{equation}\label{ordermass}
 \sum_{R {\rm \; nondeg.\; cubic\; ring } \,/\, \Z_p }
\frac{1}{\Disc_p(R)}\cdot\frac1{|\Aut(R)|}\,=\,
\Bigl(1-\frac1p\Bigr)^{-1}\Bigl(1-\frac1{p^2}\Bigr)^{-1}
\end{equation}
and
\begin{equation}\label{fieldmass}
  \sum_{K {\rm \; etale \;cubic \;extension \;of\;} \Q_p }
  \frac{1}{\Disc_p(K)}\cdot\frac1{|\Aut(K)|}\,=\, 1 + \frac 1p +
  \frac1{p^2}\,.\,\,\,\,\,\,\,\;\;\;\;\;\,\,\,\,\;\;\;\;\,\,\,
\end{equation}
(Note that (\ref{ordermass}) is an infinite sum!)  What is remarkable
about these formulae is that their statements are independent of $p$.
Such ``mass formulae'' for local fields and orders in fact hold in far
more generality (in particular, for degrees other than 3); see
\cite{Serre}, \cite{Bhamass1}, and \cite{Bhamass2}.

\subsection{The mean size of the 3-torsion subgroups of class groups
  of quadratic fields}

In this section we prove Davenport and Heilbronn's theorem on the
average size of the 3-torsion subgroups of class groups of quadratic
fields.  This is accomplished using class field theory, as in
Davenport and Heilbronn's original arguments.  This will prove
Theorem~\ref{DHth2}.



%

Let $\mathcal V=\cap_p \mathcal V_p$ be the set of all $v\in V_\Z$
corresponding to maximal cubic rings that are nowhere totally ramified
(as in Section~4).  Then by Lemma~\ref{uvdensity}, we have
$\mu(\mathcal V_p)= (1-p^{-2})^2.$
By the same argument as in the proof of the main term of Theorem~\ref{main1},
\[
\lim_{X\rightarrow\infty}
\frac{N(\V\cap V^{(i)}_\Z;X)}{X}=\frac{\pi^2}{12n_i}
   \prod [(1-p^{-2})^2]=\frac{3}{n_i\pi^2}.
\]

Now given a nowhere totally ramified cubic field $K_3$, we have
observed earlier that in the Galois closure $K_{6}$ is contained a
quadratic field $K_2$ and $K_6/K_2$ is unramified.  In addition, the
discriminant of $K_2$ is equal to the discriminant of $K_3$.
Furthermore, by class field theory the number of triplets of cubic
fields $K_3$ corresponding to a given $K_2$ in this way equals
$(h_3^\ast(K_2) - 1)/2$, where $h_3^\ast(K_2)$ denotes the number of
3-torsion elements in the class group of $K_2$.  Therefore,
\begin{equation}\label{l4eq}
  \begin{array}{ccc}
    \displaystyle{\sum_{{0<\Disc(K_2)<X}} (h_3^*(K_2)-1)/2} &=& N(\V\cap
    V^{(0)}_\Z;X), \\[.3in]
    \displaystyle{\sum_{{\!\!\!-X<\Disc(K_2)<0}}\!\!
      (h_3^*(K_2)-1)/2} &=& N(\V\cap V^{(1)}_\Z;X).
  \end{array}
\end{equation}
Since it is known that
\begin{equation}\label{trDH}
\begin{array}{ccc}
  \displaystyle{\lim_{X\rightarrow\infty}\frac{\sum_{{0<\Disc(K_2)<X}} 1}{X}}
  &=&  \displaystyle{\frac{3}{\pi^2}}, \\[.25in]
  \displaystyle{\lim_{X\rightarrow\infty}\frac{\sum_{{-X<\Disc(K_2)<0}} 1}{X}}
  &=&  \displaystyle{\frac{3}{\pi^2}},
\end{array}
\end{equation}
we conclude
\[
\begin{array}{ccccccc}
  \displaystyle{
    \lim_{X\rightarrow\infty}\frac{\sum_{{0<\Disc(K_2)<X}} h_3^*(K_2)}
    {\sum_{{0<\Disc(K_2)<X}} 1} }
  &\!\!=\!\!&
  \displaystyle{
    1+2\lim_{X\rightarrow\infty}\frac{N(\V\cap V^{(0)}_\Z;X)}{\sum_{{0<\Disc(K_2)<X}} 1}}
  &\!\!=\!\!&
  \displaystyle{1+\frac{2\cdot 3/6\pi^2}{3/\pi^2}}&\!\!=\!\!&
  \displaystyle{\frac{4}{3}},\\[.35in]
  \displaystyle{
    \lim_{X\rightarrow\infty}\frac{\sum_{{-X<\Disc(K_2)<0}} h_3^*(K_2)}
    {\sum_{{-X<\Disc(K_2)<0}} 1} }
  &\!\!=\!\!&
  \displaystyle{
    1+ 2\lim_{X\rightarrow\infty}\frac{N(\V\cap V^{(1)}_\Z;X)}{\sum_{{-X<\Disc(K_2)<0}} 1}}
  &\!\!=\!\!&
  \displaystyle{1+\frac{2\cdot 3/2\pi^2}{3/\pi^2}} &\!\!=\!\!&
  \displaystyle{2}.
\end{array}
\]

\section{A refined sieve, and proofs of Theorems~\ref{main1}, \ref{main2}, and \ref{gensigma1}}

As we have seen, an integral binary cubic form corresponds to a maximal
ring if and only if its coefficients satisfy certain congruence
conditions modulo $p^2$ for each prime $p$.  To prove Theorem
\ref{main1} using Theorem~\ref{shincong}, we require a suitable sieve
as follows. Recall that for each prime $p$, we defined $\W_p$ to be
the set of binary cubic forms corresponding to cubic rings that are non-maximal at $p$, and $\ZZ_p$
to be the set of binary cubic forms corresponding to cubic rings that are non-maximal at $p$, or are maximal at $p$ but in which $p$ is totally
ramified. For a squarefree integer $n$, define $\W_n=\cap_{p|n}\W_p$
and $\ZZ_n=\cap_{p|n}\ZZ_p$. Then the number of isomorphism classes of
maximal cubic orders having absolute discriminant in the dyadic range $X/2$ to $X$ is
equal to
\begin{equation}\label{mainsieve}
N(\U\cap V_\Z^{(i)};X/2,X) = \displaystyle\sum_{n\in\mathbb{N}}\mu(n)N(\W_n\cap V_\Z^{(i)};X/2,X)
\end{equation}
and the number of isomorphism classes of nowhere totally ramified maximal cubic 
orders in the range $X/2$ to $X$ is equal to
\begin{equation}\label{mainsieve1}
N(\V\cap V_\Z^{(i)};X/2,X) = \displaystyle\sum_{n\in\mathbb{N}}\mu(n)N(\ZZ_n\cap V_\Z^{(i)};X/2,X).
\end{equation}

We focus our discussion on the first sieve, the second sieve being treated in an analogous manner.
In order to prove Theorem \ref{main1}, we need to estimate the
individual terms on the right hand side of (\ref{mainsieve})
accurately. The difficulty lies in the fact that the sets $\W_n$ are
defined by congruence conditions modulo $n^2$. We are then not able to
directly apply Theorem~\ref{shincong}, due to the fact that the
$\W_n$ is the union of a large number of lattices modulo $n^2$. In
Section 9.1, we show how to transform this count to one over fewer
lattices defined by congruence conditions modulo $n$, thus enabling us to
use Theorem~\ref{shincong} more effectively.

We then split (\ref{mainsieve}) into three ranges for $n$ and use a
different method on each range.  We use the splitting of
the discriminant range into dyadic ranges so that we may choose the
three ranges for $n$ depending on the dyadic range of the
discriminant.  When $n$ is small, we use Theorem~\ref{shincong}
together with an identity proven in Section 9.1 to evaluate
$N(\W_n;X/2,X)$ with two main terms and a smaller error term.
Meanwhile, when $n$ gets very large we apply the uniformity estimates
from \cite[Lemma~2.7]{BBP} to bound the size of $|N(\W_n;X/2,X)|$.
Lastly, when $n$ is around $X^{1/6}$ it turns out that
Theorem~\ref{shincong} and \cite[Lemma~2.7]{BBP} do not suffice, and
so we require a different argument. We use again the correspondence of
Section~$9.1$ to reduce the problem to one of determining the main
term for the weighted count of binary cubic forms having bounded
discriminant, where each binary cubic form is weighted by the number
of its roots in $\P^{1}(\Z/n\Z)$.  To accomplish this count, we us an equidistribution argument, carried out in Section~9.4.
 We then complete the proof of Theorem~\ref{main1} in Section~9.5.

In Section 9.6, we prove Theorem \ref{main2} in a very similar manner
to the proof of Theorem \ref{main1}.  Finally, in Section~9.7, we prove
Theorem~\ref{gensigma1} by expressing the second terms that arise in the count of isomorphism
classes of cubic rings of bounded discriminant satisfying specified
local conditions in terms of local masses of cubic rings.


\subsection{Two useful identities}

For $\alpha\in\P^1(\Z/p\Z)$, define $V_{p,\alpha}$ to be the set of
all integer binary cubic forms $f\in V_\Z$ such that $f$ (mod~$p$) has
a root at $\alpha$, and $V^2_{p,\alpha}$ the set of
all integer binary cubic forms $f\in V_\Z$ such that $f$ (mod~$p$) has
at least a double root at $\alpha$.
Note that
although $V_{p,\alpha}$ and $V^2_{p,\alpha}$ are not $\GL_2(\Z)$-invariant, the unions
$\displaystyle\cup_\alpha V_{p,\alpha}$
and $\displaystyle\cup_\alpha V^2_{p,\alpha}$
are each $\GL_2(\Z)$-invariant.

Our sieve makes use of the following proposition which contains
two essential identities:

\begin{proposition}\label{switch}
We have
\begin{eqnarray}\label{switchprimes}
\label{firstswitch}N(\mathcal W_p;X) &=& \displaystyle\sum_{\alpha\in\mathbb{P}^1(\mathbb{F}_p)} N(V_{p,\alpha};X/p^2)
  \;\,-\,\displaystyle\sum_{\alpha\in\mathbb{P}^1(\F_p)}
  N(V_{p,\alpha};X/p^4) \;+\; N(V_\Z;X/p^4) \,; \\[.2in]
\label{secondswitch}N(\ZZ_p;X) &=& \displaystyle\sum_{\alpha\in\mathbb{P}^1(\mathbb{F}_p)} N(V_{p,\alpha};X/p^2)+N(T_p(1^3);X)
  \,-\!\displaystyle\sum_{\alpha\in\mathbb{P}^1(\F_p)}
  N(V^2_{p,\alpha};X/p^2) \;+\; N(V_\Z;X/p^4) \,.
\end{eqnarray}
\end{proposition}

\begin{proof}
  To prove (\ref{firstswitch}), we count isomorphism classes of pairs
  $(R,R')$ of cubic rings such that $R\subset R'$ with $[R':R]=p$ and
  $\Disc(R)<X$.  We count these in two ways, namely, by $R$ and by
  $R'$.

  First, in order to count pairs $(R,R')$ by $R$, recall from
  Proposition~\ref{supring} that, for any integral binary cubic form
  $f\in\W_p\setminus p\cdot V_{\Z}$, the ring $R=R(f)$ is contained in
  a unique ring $R'\subset R\otimes\Q$ such that $[R':R]=p$.  Meanwhile, if
  $f =pg\in p\cdot V_\Z$, then $R$ sits inside $\omega_p(g)$ rings
  $R'\subset R\otimes \Q$ with $[R':R]=p$, where we use $\omega_p(g)$
  to denote the number of roots in $\P^1(\Z/p\Z)$ of $g$ (mod~$p$).  It
  follows that the total number of pairs $(R,R')$ is
\begin{equation}\label{loosepairscount}
  N(\mathcal W_p;X) - N(V_\Z;X/p^4)+ \displaystyle\sum_{\alpha\in\mathbb{P}^1(\F_p)} N(V_{p,\alpha};X/p^4).
\end{equation}
The third term on the right hand
side of the above expression counts those pairs $(R\!=\!R(f),R')$ that
correspond to integer binary cubic forms $f=pg\in p V_\Z$.

We now count the number of pairs $(R,R')$ by $R'$.  Recall by
Proposition~\ref{subring} that for any binary cubic form $f$, the
cubic ring $R'=R(f)$ has precisely $\omega_p(f)$ subrings $R$ of index
$p$.  Therefore, since $R'$ is constrained by $\Disc(R')=\Disc(R)/p^2<X/p^2$, we
see then that the total number of pairs $(R,R')$ is given by
\begin{equation}\label{tightpairscount}
\displaystyle\sum_{\alpha\in\mathbb{P}^1(\F_p)} N(V_{p,\alpha};X/p^2).
\end{equation}
Equating (\ref{loosepairscount}) and (\ref{tightpairscount}) yields
the identity (\ref{firstswitch}).

\vspace{.1in}
To prove (\ref{secondswitch}), we begin by deriving a formula for $N(\mathcal W_p\cap T_p(1^3); X)$.
To this end, we count now isomorphism classes of
pairs $(R,R')$ of cubic rings such that $R\subset R'$ with $[R':R]=p$
and $\Disc(R)<X$, where furthermore $R$ has splitting type $(1^21)$
at $p$.  We again count these in two ways, namely, by $R$ and by~$R'$.

First, we note that if $R$ has splitting type $(1^21)$ at $p$, and
$R=R(f)$, then $R'\subset R\otimes\Q$ is uniquely determined and is
primitive at $p$; moreover, if we write $R'=R(f')$, then $f'$ (mod $p$)
has a distinguished simple root in $\P^1(\F_p)$.  Conversely, if
$R'=R(f)$, where $f$ (mod $p$) has a simple root in $\P^1(\F_p)$, then
any subring $R$ of index $p$ will have splitting type $(1^21)$ at $p$.
It follows that the
number of desired pairs $(R,R')$ is
\begin{equation}\label{pairscount2}
  N(\mathcal W_p\cap T_p(1^21);X) \;\;=\;\:
\displaystyle\sum_{\alpha\in\mathbb{P}^1(\F_p)} N(V_{p,\alpha};X/p^2)
- \displaystyle\sum_{\alpha\in\mathbb{P}^1(\F_p)} N(V^2_{p,\alpha};X/p^2)
\end{equation}
where we have counted such pairs $(R,R')$ by $R$ on the left and by $R'$ on the right.
Noting that
\begin{equation}\label{wpdecomp}
  N(\mathcal W_p;X) \;\; = \;\;
  N(\mathcal W_p\cap T_p(1^21);X) \; + \;
  N(\mathcal W_p\cap T_p(1^3);X) \; + \;
  N(pV_\Z;X)\,,
\end{equation}
together with (\ref{firstswitch}) and (\ref{pairscount2}), yields the following identity:
\begin{equation}\label{otherswitch}
N(\mathcal W_p\cap T_p(1^3); X) =
 \displaystyle\sum_{\alpha\in\mathbb{P}^1(\F_p)} N(V^2_{p,\alpha};X/p^2)
 \;\,-\,\displaystyle\sum_{\alpha\in\mathbb{P}^1(\F_p)}N(V_{p,\alpha};X/p^4) \;\,.
\end{equation}
Since we know that
$$
N(\ZZ_p;X)=N(\mathcal W_p;X)+N(T_p(1^3);X)-N(\mathcal W_p\cap T_p(1^3); X),
$$
we obtain (\ref{secondswitch}).
\end{proof}

For any squarefree $n\in\mathbb N$ and $\alpha\in\P^1(\Z/n\Z)$, let
$V_{n,\alpha}$ denote the set of all integral binary cubic forms $f\in
V_\Z$ such that the reduction of $f$ (mod~$n$) has a root at $\alpha$,
and $V_{n,\alpha}^2$ the set of all integral binary cubic forms $f\in
V_\Z$ such that the reduction of $f$ (mod~$p$) has at least a double
root at the reduction of $\alpha$ (mod $p$) for all primes $p$
dividing $n$. 

Then the above analysis generalizes in a straightforward way to squarefree
integers $n$ to give
\begin{eqnarray}\label{eqswitchidentity}
  \label{firstswitchforn}N(\mathcal W_n;X)&=&
  \displaystyle\sum_{\substack{{k,\ell,m\in\Z_{\geq0}}\\{k\ell m=n}\\\alpha\in\mathbb{P}^1(\mathbb{Z}/k\ell\mathbb{Z})}}\mu(\ell)N\left(V_{k\ell,\alpha};\frac{X}{k^2\ell^4m^4}\right)
  \,=\!\!\!\! \displaystyle\sum_{\substack{k,\ell\in\mathbb{Z}_{\geq0}\\k\ell|n\\ \alpha\in\mathbb{P}^1(\mathbb{Z}/k\ell\mathbb{Z})}}\mu(\ell)N\left(V_{k\ell,\alpha};\frac{Xk^2}{n^4}\right);\\[.2in]
\label{secondswitchforn}N(\mathcal Z_n;X)&=&\displaystyle\sum_{\substack{{k,\ell,m,q\in\Z_{\geq0}}\\{k\ell mq=n}\\\alpha\in\mathbb{P}^1(\mathbb{Z}/k\ell\mathbb{Z})}}\mu(\ell)N\left(V_{k,\alpha}\cap V^2_{\ell,\alpha}\cap T_q(1^3);\frac{X}{k^2\ell^2m^4}\right).
\end{eqnarray}




\subsection{Back to the sieve}

Let us define the error functions
$E^{(i)}_n(X)$ and $E^{(i)}_n(X/2,X)$ for squarefree $n$ by
\begin{equation}\label{mainerror}
\begin{array}{rcl}
  E^{(i)}_n(X) &=& \displaystyle{N(\mathcal W_n\cap V_\Z^{(i)};X)- \left({\gamma_1(n)}c_1^{(i)}X+\gamma_2(n)c_2^{(i)}X^{5/6}\right),}\vspace{.1in}\\
  E^{(i)}_n(X/2,X) &=& \displaystyle{N(\mathcal W_n\cap V_\Z^{(i)};X/2,X)- \left(\frac{\gamma_1(n)}{2}c_1^{(i)}X+\bigl(1-{2^{-5/6}}\bigr)\gamma_2(n)c_2^{(i)}X^{5/6}\right),}
\end{array}
\end{equation}
where $\gamma_1(n)$ and $\gamma_2(n)$ are defined by the conditions
$\gamma_1(p)+\mu_1(p)=\gamma_2(p)+\mu_2(p)=1$ for $n=p$ prime, and
$\gamma_1(n)=\prod_{p|n}\gamma_1(p)$ and
$\gamma_2(n)=\prod_{p|n}\gamma_2(p)$
for general squarefree $n$.
Returning to Equation (\ref{mainsieve}),  we write
\begin{align*}
  N(\U\cap V_\Z^{(i)};X/2,X)&= \displaystyle\sum_{n\in\mathbb{N}}\mu(n)N(\W_n\cap V_\Z^{(i)};X/2,X)\\
  &= \displaystyle\sum_{n\in\mathbb{N}}\mu(n) \left(\frac{\gamma_1(n)}{2}c_1^{(i)}X+\bigl(1-2^{-5/6}\bigr)\gamma_2(n)c_2^{(i)}X^{5/6}\right)+ \displaystyle\sum_{n\in\mathbb{N}}\mu(n)E_n^{(i)}(X/2,X)\\
  &= \displaystyle\frac{c_1^{(i)}X}{2\zeta(2)\zeta(3)} +\displaystyle\bigl(1-2^{-5/6}\bigr)\frac{c^{(i)}_2X^{5/6}}{\zeta(2)\zeta(5/3)} +\displaystyle\sum_{n\in\mathbb{N}}\mu(n)E_n^{(i)}(X/2,X).
\end{align*}
Thus to prove Theorem \ref{main1}, it is sufficient prove the estimate
\begin{equation}\label{errorsum}
\displaystyle\sum_{n\in\mathbb{N}}|E_n^{(i)}(X/2,X)|=O_\epsilon(X^{5/6-1/48+\epsilon}).
\end{equation}

Fix small numbers $\delta_1,\delta_2 > 0$ to be determined
later. We break up (\ref{errorsum}) into the three different ranges

\[0\leq n\leq X^{1/6\,-\,\delta_1},\,\, X^{1/6\,-\,\delta_1}\leq n\leq
X^{1/6\,+\,\delta_2},\textrm{ and } X^{1/6\,+\,\delta_2}\leq n\] and estimate
$\sum_n|E_n^{(i)}(X/2,X)|$ for $n$ in each range separately.

\subsection{The small and large ranges}

Suppose $n$ is a fixed positive integer. Let $k,\ell$ be positive
integers such that $k\ell\mid n$ and let $\alpha\in\P^1(\Z/k\ell\Z)$. Then, by
Theorem \ref{shincong}, there exist constants $c^{(i)}_1(\alpha)$ and
$c_2^{(i)}(\alpha)$ such that
\begin{equation}\label{eqveryimp}
N\left(V_{k\ell,\alpha}\cap V_\Z^{(i)};\frac{Xk^2}{2n^4},\frac{Xk^2}{n^4}\right)=
c_1^{(i)}(\alpha)\frac{Xk^2}{2n^4}+\bigl(1-2^{-5/6}\bigr)c_2^{(i)}(\alpha)\left(\frac{Xk^2}{n^4}\right)^{5/6}
+O_\epsilon\left(\frac{T_1^{1/3}X^{3/4+\epsilon}k^{3/2}}{n^3}\right)
\end{equation}
where, in the notation of Theorem \ref{shincong},
$T_1=T_1(k,\ell,\alpha)$ is an integer dividing $k\ell$ which depends
only on the lattice $V_{k\ell,\alpha}$.  Now, if a lattice $V_{k\ell,\alpha}$ satisfies $T_1(k,\ell,\alpha)=d$, then by the definition of $T_1$,
the image of $\alpha$ in $\P^1(\Z/d\Z)$ must be $0$. Hence, the number of choices for $\alpha$ is $O((k\ell/d)^{1+\epsilon})$. Since the total number of
$(k,\ell)$ such that $k\ell$ divides $n$ is $O(n^{\epsilon})$, we conclude that the number of lattices
$V_{k\ell,\alpha}$ satisfying $T_1(k,\ell,\alpha)=d$ is bounded by
$O(n^{1+\epsilon}/d)$. Therefore, from
(\ref{firstswitchforn}), (\ref{mainerror}), and (\ref{eqveryimp}),
we see that
\[|E_n^{(i)}(X/2,X)| = O_\epsilon\left(\displaystyle{\sum_{d|n}
\frac{n^{1+\epsilon}d^{1/3}X^{3/4+\epsilon}}{dn^{3/2}}}\right)=
O_{\epsilon}\left(\displaystyle\frac{X^{3/4\,+\,\epsilon}}{n^{1/2\,-\,\epsilon}}\right).\]
Summing over $n$, we conclude that
\begin{equation}\label{smallrange}
  \displaystyle\sum_{n=0}^{\,\,\,\,\,\,\,\,\,\,\,\,\,\,\, X^{1/6\,-\,\delta_1}}\!\!\!\!\!\!\!\!\!\!|E_n^{(i)}(X/2,X)| = O_{\epsilon}(X^{5/6\,-\,\delta_1/2\,+\,\epsilon}).
\end{equation}

From the definitions of $\gamma_1$ and $\gamma_2$, and from
(\ref{smtdenmax}), we have the estimates
\begin{equation}\label{gammaest}
\gamma_1(n)=O_{\epsilon}(n^{-2+\epsilon})\,\textrm{ and }\,\gamma_2(n)=O_{\epsilon}(n^{-5/3\,+\,\epsilon}).
\end{equation}
Let $q(n)$ denote the number of prime divisors of $n$. 
The next lemma follows from \cite[Lemmas~2.7 and 3.3]{BBP}:
\begin{lemma}\label{lemunifn}
For a square-free integer $n$, we have \[N(\ZZ_n;X)=O(3^{q(n)}X/n^2).\]
\end{lemma}
Thus we also have the estimate
\[N(\W_n;X)=O_{\epsilon}(X/n^{2-\epsilon}).\]
We deduce that

\[|E_n^{(i)}(X/2,X)|=O_{\epsilon}(X/n^{2-\epsilon})+O_{\epsilon}(X^{5/6}/n^{5/3-\epsilon}),\]
and summing up over $n$ we obtain
\begin{equation}\label{largerange}
\displaystyle\sum_{n\geq X^{1/6+\delta_2}} |E_n^{(i)}(X/2,X)| =
O_{\epsilon}(X^{5/6\,-\,\delta_2\,+\,\epsilon})+O_{\epsilon}(X^{13/18\,-\,2\delta_2/3\,+\,\epsilon}).
\end{equation}

In the next section, we estimate the sum of $|E_n^{(i)}(X/2,X)|$ over
the range $X^{1/6\,-\,\delta_1}\leq n\leq X^{1/6\,+\,\delta_2}.$

\subsection{An equidistribution argument}

We now concentrate on the middle range $X^{1/6\,-\,\delta_1}\leq n\leq
X^{1/6\,+\,\delta_2}.$ Let us write
\begin{equation}\label{eq941}
N(\W_n\cap V_\Z^{(i)};X)=\sum_{k\ell \mid n}\mu(m)S^{(i)}_{k\ell}(Xk^2/n^4),
\end{equation}
where
 \[S^{(i)}_n(X)=\displaystyle\sum_{\alpha\in\mathbb{P}^1(\mathbb{Z}/n\mathbb{Z})} N(V_{n,\alpha}\cap V_\Z^{(i)},X).\]

 In this section, we estimate $S^{(i)}_n(X)$, and then use
 (\ref{mainerror}) and (\ref{eq941}) to obtain a corresponding
 estimate on $E^{(i)}_n(X/2,X)$.  Given a form $f,$ let $w_n(f)$
 denote as before the number of roots in $\P^1(\Z/n\Z)$ of $f$
 (mod~$n$).  Then the number $S^{(i)}_n(X)$ counts the number of
 $\GL_2(\Z)$-equivalence classes of irreducible binary cubic forms in
 $V_\Z^{(i)}$, weighted by $w_n(f)$, having discriminant bounded by
 $X$. Thus
\begin{equation}\label{weighted count}
S^{(i)}_n(X)=\displaystyle\sum_{\substack{f\in\GL_2(\Z)\backslash V_\Z^{\irr}\\ |\Disc(f)|\leq X}} w_n(f).
\end{equation}


We now consider $w_n(f)$ as a function on $V_{\Z/n\Z}$ and bound its
Fourier transform pointwise. This in turn will allow us to count the number of binary cubic
forms $f$, weighted by $w_n(f)$, in small boxes (boxes with each side length at least $n^{3/4+\epsilon}$). We then can count
this weighted number of binary cubic forms in fundamental domains
using the ideas of Section 5, yielding the desired estimate for
$S^{(i)}_n(X)$, and therefore for $|E^{(i)}_n(X/2,X)|$.

Define $\widehat{V_{\Z}/nV_{\Z}}$ to be
the space of additive characters
$\chi:V_{\Z}/nV_{\Z}\to \mathbb{C}^{\times}.$ Then we define the Fourier
transform $\widehat{g} : \widehat{V_{\Z}/nV_{\Z}}\rightarrow \mathbb{C}$ of a
function $g:V_{\Z}/nV_{\Z} \rightarrow \mathbb{C}$ via
\[ \widehat{g}(\chi) := n^{-4}\displaystyle\sum_{v\in V_{\Z}/nV_{\Z}} g(v)\chi(v).\]
Fourier inversion then states that
$$g(v)=\displaystyle\sum_{\chi\in \widehat{V_{\Z}/nV_{\Z}}}\hat{g}(\chi)\bar{\chi}(v).$$

We focus now on computing $\widehat{w_n}(\chi).$ Assume first
that $n=p$ is prime.  We start with the trivial character which maps
all of $V_{\Z}/pV_{\Z}$ to 1, which we denote by $\m1$. Then
$$\widehat{w_p}(\m1) = p^{-4}\sum_{v\in V_{\Z}/pV_{\Z}}w_p(v) = 1+p^{-1}.$$
Now for any $\chi\neq \m1,$ we compute
\begin{equation}\label{eqfourierprimes}
\begin{array}{rcl}
  \widehat{w_p}(\chi) &=&  \displaystyle{p^{-4}\sum_{v\in V_{\Z}/pV_{\Z}}\chi(v)w_p(v)}\\[.05in]
  &=& \displaystyle{p^{-4}\displaystyle\sum_{\substack{v\,:\,\chi(v)=1 }}w_p(v) + p^{-4}\displaystyle\sum_{\substack{v\,:\,\chi(v)\neq 1}}w_p(v)\chi(v).}
\end{array}
\end{equation}
Since $\chi(v)=1$ for $p^3$ values of $v$ and $w_p(v)\leq 3$
for $v\neq 0$, we have the estimate
\begin{equation}\label{boundFourier1}
  \displaystyle\sum_{v\,:\,\chi(v)=1 }w_p(v)\leq 3(p^3-1)+(p+1)=3p^3+p-2.
\end{equation}
Because $w_p(\lambda v)=w_p(v)$ for any
$\lambda\in\mathbb{F}_p^{\times}$, we see that if $\chi(v)\neq 1$
then
$$\sum_{\lambda\in\mathbb{F}_p^{\times}}w_p(\lambda v)\chi(\lambda v)
= -w_p(v),$$ implying
\begin{equation}\label{eqboundFourier2}
  \displaystyle\sum_{\substack{v\,:\,\chi(v)\neq 1}}w_p(v)\chi(v) = -(p-1)^{-1}\displaystyle\sum_{\substack{v\,:\,\chi(v)\neq 1}} w_p(v).
\end{equation}
Combining (\ref{boundFourier1}) with (\ref{eqboundFourier2}), we see
that (\ref{eqfourierprimes}) implies that
\begin{equation}\label{boundprimesFourier}
\widehat{w_p}(\chi) \ll p^{-1}
\end{equation}
uniformly for $\chi\neq 0.$

Now let $n$ be a general squarefree integer. Then
$\widehat{V_{\Z}/nV_{\Z}}\cong\oplus_{p|n}\widehat{V_{\Z}/pV_{\Z}}$ and
$w_n(f)=\prod_{p|n}w_p(f)$. From this we conclude that $\widehat{w_n}(\chi)
= \prod_{p|n}\widehat{w_p}(\chi_p),$ where $\chi_p$ is the $p$-part of
$\chi.$ Using this and (\ref{boundprimesFourier}) implies that
\begin{equation}\label{boundFourier}
\widehat{w_n}(\chi) \ll \prod_{\substack{p|n\\ \chi_p\neq \m1}}p^{-1}
\end{equation}
and also
\begin{equation}\label{boundFourier2}
\widehat{w_n}(\m1) = \prod_{p\mid n}(1+p^{-1})=\sigma(n)/n,
\end{equation}
where $\sigma(n)=\sum_{d|n} d$ denotes as usual the sum-of-divisors function.

We now run through the argument in Section 5, counting integer binary
cubic forms $f$ weighted by $w_n(f)$.  Identically as in (\ref{avg}),
we have the following identity.
\begin{equation}\label{avg1}
  S^{(i)}_n(X) = \frac1{M_i}\int_{g\in N'(t)A'\Lambda}
  S^{(i)}_n(m,t,\lambda,X)t^{-2}
  dm\, d^\times t\,d^\times \lambda\,,
\end{equation}
where $$S^{(i)}_n(m,t,\lambda,X):=\!\!\!\!\!\displaystyle\sum_{x\in
  B(m,t,\lambda,X)}w_n(x).$$

To estimate $S^{(i)}_n(m,t,\lambda,X)$, we tile the set $B(m,t,\lambda,X)$
with boxes and count weighted integer cubic forms inside each box.
We have the following two lemmas.
\begin{lemma}\label{tile}
  Suppose $R$ is a region in $\R^4$ having volume $C_1$ and surface
  area $C_2$. Let $N$ be a positive integer. Then there exists a set
  $R'\subset R$ having volume equal to $C_1+O(N\cdot C_2)$ such that
  $R'$ can be tiled with $4$-dimensional boxes with all sides having length
  $N$.
\end{lemma}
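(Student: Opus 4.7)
The plan is to take $R'$ to be the union of all closed axis-aligned boxes of side length $N$, drawn from the grid $N\cdot\Z^4$, that lie entirely inside $R$. By construction $R'\subset R$ is tiled by such boxes, so only the volume estimate $\Vol(R')=C_1+O(N\cdot C_2)$ requires work, and since $\Vol(R')\le C_1$, this amounts to proving $\Vol(R\setminus R')=O(N\cdot C_2)$.

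First I would observe that a point $x\in R\setminus R'$ must lie in some grid box $B$ of side length $N$ which is not entirely contained in $R$; hence $B$ meets the boundary $\partial R$, and so $x$ itself is within $\ell^\infty$-distance $N$ (and thus Euclidean distance at most $2N$) from $\partial R$. Therefore $R\setminus R'$ is contained in the $2N$-tubular neighborhood $T_{2N}(\partial R)$ of $\partial R$. The second step is then to invoke the standard Minkowski--sausage estimate: for a sufficiently regular set of boundary area $C_2$ in $\R^4$, one has $\Vol(T_r(\partial R))=O(r\cdot C_2)$ uniformly for $r$ in any bounded range, where the implied constant depends only on the ambient dimension. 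Applied with $r=2N$ this gives $\Vol(R\setminus R')=O(N\cdot C_2)$, which is exactly the desired bound.

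The main thing to be careful about is the regularity hypothesis behind the tubular-neighborhood bound. In all the applications of this lemma in the paper, $R$ will be a bounded semi-algebraic set (a piece of $B(n,t,\lambda,X)$), so its boundary is a finite union of real-algebraic hypersurfaces of bounded degree, and the Minkowski estimate $\Vol(T_r(\partial R))\ll r\cdot C_2$ holds by standard geometry-of-numbers or integral-geometry arguments (for instance, covering $\partial R$ by $O(C_2/\rho^3)$ Euclidean balls of radius $\rho\gg N$ and noting that each contributes volume $O(N\rho^3)$ to the tubular neighborhood). This is the one step that is not purely formal; everything else is bookkeeping about which grid cells to include. Once the boundary estimate is in hand, combining it with $R'\subset R$ yields $\Vol(R')=C_1+O(N\cdot C_2)$, completing the proof.
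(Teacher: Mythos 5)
Your argument is essentially identical to the paper's: take $R'$ to be the union of grid boxes of side $N$ lying entirely inside $R$, then observe that $R\setminus R'$ lies in a tubular neighborhood of $\partial R$ of radius $O(N)$ and has volume $O(N\cdot C_2)$. The only difference is that you spell out the Minkowski-sausage estimate and the regularity hypothesis needed for it, which the paper treats as immediate; this is a reasonable amount of extra care but does not change the approach.
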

\begin{proof}
  We first tile $\R^4$ with boxes having side length equal to $N$.
  Then we place $R$ inside $\R^4$ and take $R'$ to be the union of
  those boxes which lie entirely inside $R$. The region $R\setminus
  R'$ is within distance $N$ of the boundary of $R$. It is thus clear
  that the volume of $R'$ is equal to $C_1+O(N\cdot C_2)$.
\end{proof}

We now use equation (\ref{boundprimesFourier}) to establish the following
quantitative equidistribution statement for $w_n(f)$ inside boxes having
small sidelengths relative to $n$.

\begin{lemma}\label{countinboxes}
Let $\mathcal{B}\subset V_\R$ be a box with sides parallel to the
coordinate axes on $V_\R$  formed by the coefficients of the cubic
form $(a,b,c,d)$ such that each side has length $N\leq n$.
Then
$$\sum_{v\in\mathcal{B}\cap V_{\Z}}w_n(v) = \frac{\sigma(n)}{n}{\rm Vol}(\mathcal B) + O_{\epsilon}(n^{3+\epsilon}).$$
\end{lemma}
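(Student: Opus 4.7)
The plan is to expand $w_n$ in characters on $L/nL$ and use the Fourier bounds (\ref{boundFourier}) and (\ref{boundFourier2}) together with standard geometric-sum estimates over the box $\mathcal{B}$. Applying Fourier inversion yields
\[\sum_{v\in\mathcal{B}\cap L} w_n(v) \;=\; \widehat{w_n}(\m1)\cdot\#(\mathcal{B}\cap L) \;+\; \sum_{\chi\neq\m1}\widehat{w_n}(\chi)\sum_{v\in\mathcal{B}\cap L}\bar\chi(v).\]
Since $\mathcal{B}$ has sides of length at most $n$, its surface area is $O(n^3)$, so $\#(\mathcal{B}\cap L)=\Vol(\mathcal{B})+O(n^3)$; combined with $\widehat{w_n}(\m1)=\sigma(n)/n=O_\epsilon(n^{\epsilon})$ from (\ref{boundFourier2}), the trivial character contributes $(\sigma(n)/n)\Vol(\mathcal{B})$ plus an error of $O_\epsilon(n^{3+\epsilon})$, which matches the claimed main term.

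For the non-trivial characters, I would parametrize each $\chi\in\widehat{L/nL}$ by a tuple $c=(c_1,\ldots,c_4)\in(\Z/n\Z)^4$ via $\chi_c(v)=e^{2\pi i\langle v,c\rangle/n}$. Writing $\mathcal{B}=\prod_j [a_j,b_j]$ with $M_j=b_j-a_j+1\leq n$, the character sum factors into four independent geometric sums, giving the standard estimate
\[\Bigl|\sum_{v\in\mathcal{B}\cap L}\bar\chi_c(v)\Bigr| \;\leq\; \prod_{j=1}^{4}\min\bigl(M_j,\,|\sin(\pi c_j/n)|^{-1}\bigr).\]
I would then group non-trivial $c$ by $m=m(c):=\prod_{p\mid n,\,c_p\neq 0}p$, noting that $c=(n/m)c'$ for a unique $c'\in(\Z/m\Z)^4$ that is non-zero modulo every prime dividing $m$, so $|\sin(\pi c_j/n)|=|\sin(\pi c'_j/m)|$; by (\ref{boundFourier}), $|\widehat{w_n}(\chi_c)|\ll 1/m$.

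The key technical step is to bound, for each squarefree divisor $m\mid n$ with $m>1$, the sum
\[\sum_{c'\in(\Z/m\Z)^4\setminus\{0\}}\prod_{j=1}^{4}\min\bigl(M_j,\,|\sin(\pi c'_j/m)|^{-1}\bigr).\]
Using $|\sin(\pi c'_j/m)|\geq 2\min(c'_j,m-c'_j)/m$, the single-coordinate sum splits as $M_j+O(m\log n)$, separating $c'_j=0$ from $c'_j\neq 0$. Expanding the full product and subtracting the $c'=0$ contribution $\Vol(\mathcal{B})$, the cross terms (using $M_j\leq n$) are bounded by $O(n^3 m\log^4 n)$. Summing over $m\mid n$ weighted by $1/m$ yields a total non-trivial contribution of $\ll n^3\log^4 n\cdot\tau(n)=O_\epsilon(n^{3+\epsilon})$, completing the proof. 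The only real subtlety is the bookkeeping of the Fourier-coefficient bound against the character-sum bound across divisors of $n$, which decouples cleanly thanks to the squarefreeness of $n$ and the CRT.
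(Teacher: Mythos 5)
Your proposal is correct and follows essentially the same route as the paper: Fourier inversion on $L/nL$, the bound $|\widehat{w_n}(\chi)|\ll 1/m$ where $m$ is the product of primes on which $\chi$ is nontrivial, the geometric-series estimate for the box character sum, and a telescoping bound on $\prod_j(M_j+O(m\log m))-\prod_j M_j$ summed over squarefree $m\mid n$. The only cosmetic difference is that the paper specializes to cubes of side $N$ (sufficient for its application via Lemma~\ref{tile}) and phrases the telescoping as $(N+O(d\log d))^4-N^4$, whereas you treat general rectangular boxes with sides $M_j\le n$; both give $O_\epsilon(n^{3+\epsilon})$.
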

\begin{proof}
  Since each side length of $\mathcal{B}$ has side length at most $n$,
we can consider the set of lattice points in $\mathcal{B}$ as a subset
$\mathcal{B}_n$ of $V_{\Z}/nV_{\Z}.$
  We then use Fourier inversion to write
  \begin{align}
    \sum_{v\in\mathcal{B}\cap V_\Z}w_n(v)&= \sum_{v\in\mathcal{B}_n}\sum_{\chi\in\widehat{V_{\Z}/nV_{\Z}}}\widehat{w_n}(\chi)\bar{\chi}(v)\\
    &= N^4\widehat{w_n}(\m1) + \sum_{\substack{\chi\in\widehat{V_{\Z}/nV_{\Z}}\\
        \chi\neq \m1}}\widehat{w_n}(\chi)\sum_{v\in\mathcal{B}_n}\chi(-v)+O(N^3).
\end{align}

There is a $v_0\in V_{\Z}/nV_{\Z}$ such that
$\mathcal{B}_n=\{(a_1,a_2,a_3,a_4)+v_0\mid 0\leq a_1,a_2,a_3,a_4\leq
N-1\}.$ For each $\chi$, there are characters $\chi_i$, for $1\leq i\leq 4$,
such that $\chi(a_1,a_2,a_3,a_4)=\prod_{i=1}^4 \chi_i(a_i).$ Then, up to an error of $O(N^3)$,
we see that $\sum_{v\in\mathcal B_n}w_n(v)$ is equal to
\begin{align}\label{eqthemainsumte}
  N^4\widehat{w_n}(\m1) + \sum_{\substack{\chi\in\widehat{V_{\Z}/nV_{\Z}}\\
      \chi\neq \m1}}\widehat{w_n}(\chi)\sum_{v\in\mathcal{B}_n}\chi(-v)&=
  N^4\frac{\sigma(n)}{n} + \sum_{\substack{\chi\in\widehat{V_{\Z}/nV_{\Z}}\\ \chi\neq
      \m1}}\widehat{w_n}(\chi)\chi(-v_0)\prod_{i=1}^4\displaystyle\sum_{a_i=0}^{N-1}\chi_i(-a_i).
\end{align}
We estimate the sum over each $\chi\neq \m1$ separately. By
(\ref{boundFourier}), we know $|\widehat{w_n}(\chi)|\ll
\displaystyle\prod_{\substack{p\mid n\\ \chi_p\neq \m1}}p^{-1}.$ Now, for a character
$\psi$ of $\Z/n\Z,$ we define $A_N(\psi)$ by
$$A_N(\psi):=\sum_{a=0}^{N-1}
\psi(a)=\begin{cases}
  N & \psi=\m1\\
  \displaystyle\frac{1-\psi(N)}{1-\psi(1)} & \psi\neq \m1
\end{cases}$$ and then define $A_N(\chi):=\prod_{i=1}^4 A_N(\chi_i).$
This implies that $\displaystyle\sum_{\psi\in \widehat{\Z/n\Z}}
|A_N(\psi)|\ll \displaystyle\sum_{k=1}^n\frac{n}{k}\ll n\log\,n$.

We now estimate the right hand side of (\ref{eqthemainsumte}) as
follows:
\begin{align*}
  N^4\frac{\sigma(n)}{n} + \sum_{\substack{\chi\in\widehat{V_{\Z}/nV_{\Z}}\\ \chi\neq
      \m1}}\widehat{w_n}(\chi)\chi(-v)\prod_{i=1}^4\displaystyle\sum_{a_i=0}^{N-1}\chi_i(-a_i)&=
  N^4\frac{\sigma(n)}{n} + O\Bigl( \sum_{\substack{\chi\in\widehat{V_{\Z}/nV_{\Z}}\\ \chi\neq \m1}}|A_N(\chi )\widehat{w_n}(\chi)|\Bigr)\\
  &= N^4\frac{\sigma(n)}{n} + O_{\epsilon}(n^{3+\epsilon}),\\
\end{align*}
where the last bound follows from
\begin{align*}
\sum_{\substack{\chi\in\widehat{V_{\Z}/nV_{\Z}}\\ \chi\neq
    \m1}}|A_N(\chi)\widehat{w_n}(\chi)| &\leq
\sum_{\substack{d|n\\1<d}}d^{-1}\sum_{\substack{\chi\\\chi_p\neq \m1\,\forall
    p|d\\\chi_p= \m1\,\forall p\nmid d}}|A_N(\chi)|\\
&\leq \sum_{\substack{d|n\\1<d}}d^{-1}\Bigl(\bigl(\sum_{\psi\in\widehat{\Z/d\Z}}|A_N(\psi)|\bigr)^4 - N^4\Bigr)\\
&\leq \sum_{\substack{d|n\\1<d}}d^{-1}\Bigl((N+O(d\log d))^4 - N^4\Bigr)\\
&\leq \sum_{\substack{d|n\\1<d}}O_{\epsilon}(\max(d,N)^{3+\epsilon})\\
&\leq O_{\epsilon}(n^{3+\epsilon}).
\end{align*}
This completes the proof of the lemma.
\end{proof}

We now estimate $S^{(i)}_n(m,t,\lambda,X)$ for $|m|<1/2$.
First, tile $B(m,t,\lambda,X)'\subset B(m,t,\lambda,X)$ with boxes
using Lemma~\ref{tile}.  Note that the region $B(m,t,\lambda,X)$ is
obtained by acting on the region $B(1,1,1,\frac{X}{\lambda^4})$ by
$m\cdot t\cdot\lambda\in\GL_2(\R)$. So the surface area of
$B(m,t,\lambda,X)$ is $O(\lambda^3t^3)$.
We thus have
\begin{equation}\label{eq976}
S^{(i)}_n(m,t,\lambda,X)=\frac{\sigma(n)}{n}{\rm Vol}(B(m,t,\lambda,X))+O_{\epsilon}\left(\frac{n^{3+\epsilon}\lambda^4}{N^4}\right)+O(\lambda^3t^3N),
\end{equation}
where the first error term comes from Lemma \ref{countinboxes} and the
second comes from Lemma \ref{tile}. We optimize by picking
$N=\lambda^{1/5}t^{-3/5}n^{3/5}$. Using (\ref{eq976}), as in Section 5, we
evaluate the right hand side of (\ref{avg1}) to obtain
\begin{equation}\label{seq}
S^{(i)}_n(X)=\frac{\sigma(n)}{n}c_1^{(i)}X+O_{\epsilon}(n^{3+\epsilon}+X^{5/6}n^{1/2}).
\end{equation}
Using (\ref{firstswitchforn}), (\ref{mainerror}),
(\ref{gammaest}), 
and (\ref{seq}) we
finally arrive at the bound
$$|E^{(i)}_n(X)| \leq \gamma_2(n)X^{5/6} + O_{\epsilon}(n^{\epsilon})\Bigl(\sum_{\substack{ k,\ell\in\Z\\ k\ell|n}} (k\ell)^3 + \frac{X^{5/6}k^{5/3}}{n^{17/6}}\Bigr).$$
Therefore, we have
$$|E^{(i)}_n(X)|= O_{\epsilon}(n^{\epsilon})\Bigr(\frac{X^{5/6}}{n^{7/6}} + n^3\Bigr)$$
implying
\begin{equation}\label{middlerange}
  \displaystyle\sum_{n=X^{1/6-\delta_1}}^{X^{1/6+\delta_2}}
  |E^{(i)}_n(X)|=O_{\epsilon} \bigl(X^{29/36 + {\delta_1}/{6}+\epsilon}
  + X^{2/3 +4\delta_2+\epsilon}\bigr) .
\end{equation}
This also implies the estimate
\begin{equation}\label{middleranged}
  \displaystyle\sum_{n=X^{1/6-\delta_1}}^{X^{1/6+\delta_2}} |E^{(i)}_n(X/2,X)| =O_{\epsilon}\bigl(X^{29/36 + {\delta_1}/{6}+\epsilon} + X^{2/3 +4\delta_2+\epsilon}\bigr).
\end{equation}

\subsection{Putting it together}

We combine (\ref{smallrange}), (\ref{largerange}) and
(\ref{middleranged}) to obtain
$$
\sum_{n\in\mathbb{Z}} |E^{(i)}_n(X/2,X)| \ll_{\epsilon} 
X^{5/6\,-\,\delta_1/2\,+\,\epsilon}+X^{5/6-\delta_2+\epsilon}+X^{13/18 -2\delta_2/3+\epsilon}+X^{29/36\,+\,\delta_1/6\,+\,\epsilon} + X^{2/3\,+\,4\delta_2\,+\,\epsilon}.
$$ 
We
optimize by picking $\delta_1=\frac{1}{24}$ and
$\delta_2=\frac{1}{30}$ to get
$$\sum_{n\in\mathbb{Z}} |E^{(i)}_n(X/2,X)|\ll_{\epsilon} X^{5/6\,-\,1/48\,+\,\epsilon},$$
which proves Theorem \ref{main1}.

Finally, note that the values of $\mu_1(\sigma,p)$ and
$\mu_2(\sigma,p)$ that we list in Table \ref{tab1} are the same as the
values of $C_{p,\alpha_p}$ and $K_{p,\alpha_p}$, respectively, in
\cite[Equation (5.1)]{Roberts}.  We thus also obtain Roberts' refined
conjecture (see \cite[Section 5]{Roberts}); the proof is identical to
the proof of Theorem \ref{main1}.

\subsection{Proof of Theorem \ref{main2}}
The proof of Theorem \ref{main2} is very similar to that of Theorem \ref{main1}.
This time, we
define the error function
$F^{(i)}_n(X/2,X)$ for squarefree $n$ by
\begin{equation}\label{mainerror1}
\begin{array}{rcl}
  F^{(i)}_n(X/2,X) &=& \displaystyle{N(\mathcal Z_n\cap V_\Z^{(i)};X/2,X)- \left(\frac{\gamma'_1(n)}{2}c_1^{(i)}X+\bigr(1-{2^{-5/6}}\bigr)\gamma'_2(n)c_2^{(i)}X^{5/6}\right),}
\end{array}
\end{equation}
where $\gamma'_1(n)$ and $\gamma'_2(n)$ are defined by the conditions
$\gamma'_1(p)+\mu'_1(p)=\gamma'_2(p)+\mu'_2(p)=1$ for $n=p$ prime, and
$\gamma'_1(n)=\prod_{p|n}\gamma'_1(p)$ and
$\gamma'_2(n)=\prod_{p|n}\gamma'_2(p)$
for general squarefree $n$.
We can write
\begin{align*}
  N(\V\cap V_\Z^{(i)};X/2,X)&= \displaystyle\sum_{n\in\mathbb{N}}\mu(n)N(\ZZ_n\cap V_\Z^{(i)};X/2,X)\\
  &= \displaystyle\sum_{n\in\mathbb{N}}\mu(n) \left(\frac{\gamma'_1(n)}{2}c_1^{(i)}X+\bigl(1-{2^{-5/6}}\bigr)\gamma'_2(n)c_2^{(i)}X^{5/6}\right)+ \displaystyle\sum_{n\in\mathbb{N}}\mu(n)F_n^{(i)}(X/2,X)\\
  &= \displaystyle\frac{c_1^{(i)}X}{2\zeta(2)\zeta(3)} +\displaystyle\bigl(1-{2^{-5/6}}\bigr)\frac{c^{(i)}_2X^{5/6}}{\zeta(2)\zeta(5/3)} +\displaystyle\sum_{n\in\mathbb{N}}\mu(n)F_n^{(i)}(X/2,X).\\
\end{align*}

\vspace{-.2in}
\noindent

Thus, to prove Theorem~\ref{main2}, it is sufficient prove the estimate
\begin{equation}\label{errorsum2}
\displaystyle\sum_{n\in\mathbb{N}}|F_{n}^{(i)}(X/2,X)|=O_\epsilon(X^{5/6-1/48+\epsilon}).
\end{equation}
Let $\delta_1,\delta_2 > 0$ be as in the previous subsection.
Again, we break up (\ref{errorsum2}) into the three different ranges
\[0\leq n\leq X^{1/6\,-\,\delta_1},\,\, X^{1/6\,-\,\delta_1}\leq n\leq
X^{1/6\,+\,\delta_2},\textrm{ and } X^{1/6\,+\,\delta_2}\leq n\] and estimate
$\displaystyle\sum_{n}|F_{n}^{(i)}(X/2,X)|$ for $n$ in each range separately.


In Equation (\ref{secondswitchforn}), we write $N(\mathcal Z_n\cap V_\Z^{(i)};X/2,X)$
as a sum over positive integers $k,\ell,m,q$ with $k\ell m q=n$. 
Let $k,\ell\in\Z_{>0}$ such that $k\ell |n$. Then, for
$\alpha\in\P^1(\Z/k\ell\Z)$, we may write $V_{k,\alpha}\cap
V^2_{\ell,\alpha}\cap T_q(1^3)$ as a union of $O(q^2)$ translates of lattices, each
of which has index $k\ell^2q^4$ in $V_\Z$ and is defined via
congruence conditions modulo~$k\ell q$.
The remark following Theorem \ref{shincong} implies that for each of these lattice-translates $\L$
there exist constants $c^{(i)}_1(\L)$ and
$c_2^{(i)}(\L)$ such that
\begin{equation}\label{eqveryimp1}
N\left(\L;\frac{Xk^2\ell^2q^4}{2n^4},\frac{Xk^2\ell^2q^4}{n^4}\right)=
c_1^{(i)}(\L)\frac{Xk^2\ell^2q^4}{2n^4}+\bigl(1-{2^{-5/6}}\bigr)c_2^{(i)}(\L)\left(\frac{Xk^2\ell^2q^4}{n^4}\right)^{5/6}
+O_\epsilon\left(\frac{X^{3/4}k^{3/2}\ell^{1/2}}{n^3}\right).
\end{equation}
Since there are $O_\epsilon(n^\epsilon k\ell q^2)$ such lattices, we see that
\[|F_{n}^{(i)}(X/2,X)| = O_\epsilon\left(\sum_{n=kn_1}n^\epsilon\frac{X^{3/4}}{k^{1/2}\ell^{3/2}m^3q}\right)=O_\epsilon\left(\frac{X^{3/4}}{n^{1/2-\epsilon}}\right).\]
Summing over $n=kn_1$ in the small range, we conclude that
\begin{equation}\label{smallrange1}
  \displaystyle\sum_{n=1}^{\,\,\,\,\,\,\,\,\,\,X^{\frac16\,-\,\delta_1}}|F_{n}^{(i)}(X/2,X)| = O_{\epsilon}(X^{5/6\,-\,\delta_1/2\,+\,\epsilon}).
\end{equation}

As in Section 9.3, we may use Lemma \ref{lemunifn} to estimate
$\sum_{n}|F_{n}^{(i)}(X/2,X)|$ over $n$ lying in the
large range:
\begin{equation}\label{largerange1}
  \displaystyle\sum_{n\geq X^{\frac16+\delta_2}}|F_{n}^{(i)}(X/2,X)| = O_{\epsilon}(X^{5/6\,-\,\delta_2\,+\,\epsilon})+O_\epsilon(X^{13/18-2\delta_2/3+\epsilon}).
\end{equation}

We now consider the middle range. Fix $k,\ell,q,m$ such that $k\ell
qm=n$. For $\beta\in\P^1(\Z/\ell\Z)$, we may write
$V^2_{\ell,\beta}\cap T_p(1^3)$ as a union of $O(p^2\ell^2)$ translates
of $p\ell V_\Z$. Let $\L$ be one of them. Identically to Section~9.4, using equation \eqref{seq} we have:
$$\sum_{\alpha\in\P^1(\Z/k\Z)}N\left(V_{\Z}^{(i)}\cap V_{k,\alpha}\cap\L;\frac{X}{k^2\ell^2m^4}\right)=c^{(i)}(\L)X+O_\epsilon\left(k^{3+\epsilon}+k^{1/2}\left(\frac{X}{k^2\ell^6m^4p^4}\right)^{\frac56}\right),$$
where $c(\L)$ is some explicit constant.
It follows, just as in Section 9.4, that
\begin{equation}\label{middleranged1}
  \displaystyle\sum_{n=X^{1/6-\delta_1}}^{X^{1/6+\delta_2}} |F^{(i)}_n(X/2,X)|\ll_{\epsilon} X^{29/36 + \frac{\delta_1}{6}+\epsilon} + X^{2/3 +4\delta_2+\epsilon}.
\end{equation}

Finally, note that 
\begin{equation}\label{trDH1}
\begin{array}{ccc}
  \displaystyle\sum_{0<\Disc(K_2)<X} 1  &=  \displaystyle\frac{3}{\pi^2}\cdot X +O(X^{\frac12}); \\[.25in]
  \displaystyle\sum_{-X<\Disc(K_2)<0} 1  &= \displaystyle\frac{3}{\pi^2}\cdot X +O(X^{\frac12}). \\[.25in]
\end{array}
\end{equation}
Theorem \ref{main2} may now be deduced from Equations
(\ref{smallrange1}), (\ref{largerange1}), and (\ref{middleranged1}) (together with (\ref{l4eq}) and (\ref{trDH1})) just
as Theorem~\ref{main1} was deduced in Section 9.5 from Equations
(\ref{smallrange}), (\ref{largerange}), and (\ref{middleranged}).
\subsection{Another simultaneous generalization}

In this subsection, we prove Theorem \ref{gensigma1}.\\[.15in] {\bf
  Proof of Theorem \ref{gensigma1}:} Let $p$ be a fixed finite prime.
If $R\in\Sigma_p$ is a cubic ring over $\Z_p$,
then we define $V(R)\subset V_\Z$ to be the set of
all integer binary cubic forms $f$ such that the corresponding
cubic ring $C$ satisfies $C\otimes\Z_p\cong R$. As
in Section 7, we define $\mu_1(R,p)$ and $\mu_2(R,p)$ to be such that
$$N(V(R)\cap V^{(i)}_\Z;X)=\mu_1(R,p)c_1^{(i)}X+\mu_2(R,p)c_2^{(i)}
X^{5/6}+O_{\epsilon}(X^{3/4+\epsilon}).$$
Using the same techniques as in the proofs of Theorems \ref{main1} and \ref{main2}, we
have
\begin{equation}
\begin{array}{rcl}
N(\Sigma;X)\!\!\!&=&\!\!\!
\Bigl(\frac12\displaystyle\sum_{R\in\Sigma_\infty}
\frac1{|\Aut_\R(R)|}\Bigr)\cdot
\prod_p\Bigl(\displaystyle\sum_{R\in\Sigma_p}\mu_1(R,p)\Bigr) \cdot \zeta(2)\cdot
X \vspace{.1in}\\[.1in] &+&\,\,
\Bigl(\displaystyle\sum_{R\in\Sigma_\infty} c_2(R)\Bigr)\cdot
\prod_p\Bigl(\displaystyle\sum_{R\in\Sigma_p}\mu_2(R,p)\Bigr) \cdot
X^{5/6}\,\,\vspace{.1in}\\ &+&\,\,O_{\epsilon}(X^{5/6-1/48+\epsilon}).
\end{array}\end{equation}

We now prove the following lemma:
\begin{lemma} With notation as above, we have
  \[\mu_2(R,p)=(1-p^{-2})(1-p^{-1/3})\Bigl(\frac1{\Disc_p(R)}\cdot\frac1{|\Aut(R)|}\int_{(R/\Z_p)^{{\rm
        Prim}}}i(x)^{2/3}dx\Bigr).\]
\end{lemma}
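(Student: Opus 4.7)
The plan is to adapt the strategy of Section 7---where the densities $\mu_2(\sigma,p)$ were computed by decomposing $T_p(\sigma)$ into explicit lattices indexed by tuples of roots and applying Theorem \ref{shincong}---to the case of a general cubic $\Z_p$-algebra $R$. The main new ingredient is a parametrization of $V(R)$ by primitive elements $x \in R/\Z_p$, obtained via the Delone--Faddeev correspondence over $\Z_p$ (Theorem \ref{df}). Each form $f \in V(R)$ corresponds to a normal basis $\langle 1, \omega, \theta \rangle$ of $R$, and the multiplication rules (\ref{ringlaw3}) and (\ref{ringasslaw3}) imply that $\Z_p[\omega] = \Z_p + \Z_p\omega + \Z_p\cdot a\theta$ inside $R$, so the $x^3$-coefficient $a$ of $f$ satisfies $|a|_p^{-1} = i(\omega)$.

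Next I would partition $V(R)$ into pieces $V(R,x)$ indexed by $x = \omega \bmod \Z_p \in (R/\Z_p)^{{\rm Prim}}$, on a sufficiently small $p$-adic scale so that each piece is a finite union of lattice cosets in $V_\Z$. Applying Theorem \ref{shincong} to each piece, the index parameter $m_1$ (the $p$-part of the gcd of $x^3$-coefficients) equals $i(x)$ by the identity above, so the contribution to the second main term is weighted by $m_1^{2/3} = i(x)^{2/3}$ relative to the first main term. Summing---or, in the limit, integrating---over $x \in (R/\Z_p)^{{\rm Prim}}$ converts the slicing sum into the $p$-adic integral $\int_{(R/\Z_p)^{{\rm Prim}}} i(x)^{2/3}\,dx$. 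The overall normalization comes from the mass-formula computation in Lemma \ref{ramanujan11}: the first main term $\mu_1(R,p)$ equals $\#\GL_2(\F_p)/(p^4\cdot\Disc_p(R)\cdot|\Aut(R)|) = (1-p^{-1})(1-p^{-2})/(\Disc_p(R)|\Aut(R)|)$, and a parallel computation for $\mu_2(R,p)$ produces the factor $(1-p^{-2})/(\Disc_p(R)|\Aut(R)|)$ together with the aforementioned integral. The remaining factor of $(1-p^{-1/3})$ emerges by the same inclusion-exclusion that produced $(1-p^{-1/3})$ in the analysis of $\mu_2(\sigma,p)$ in Section 7, namely the subtraction of the $p\cdot V_\Z$ contribution (equivalently, the restriction of $x$ to primitive elements, which is needed for $\langle 1,\omega,\theta\rangle$ to generate $R$).

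The main obstacle is executing the $p$-adic change of variables cleanly enough to obtain the precise numerical factor $(1-p^{-2})(1-p^{-1/3})$ and the precise weight $i(x)^{2/3}$ in the integrand. Concretely, one must convert the product structure $(\omega,\theta)$ parametrizing normal bases of $R$ into an integral over $x \in R/\Z_p$ times a free parameter for $\theta$, and verify that the Jacobian of the map from $V_{\Z_p}$ to (primitive $\omega$, residue of $\theta$) produces $(1-p^{-2})$ from the normalization of $dx$ and $|a|_p^{-2/3} = i(\omega)^{2/3}$ from the $m_1^{2/3}$ weighting of Theorem \ref{shincong}, while the inclusion-exclusion step contributes the $(1-p^{-1/3})$. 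This Jacobian computation is the second-order analogue of the constant $c = p^{-4}$ identified in the proof of Lemma \ref{ramanujan11}.
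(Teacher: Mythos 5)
Your proposal captures the same strategy as the paper: both exploit the identity $|a|_p^{-1}=i(\omega)$ read off from the multiplication laws (\ref{ringlaw3}), invoke the slicing technique of Theorem~\ref{shincong} so that the $x^3$-coefficient contributes a weight $m_1^{2/3}=i(x)^{2/3}$, and then convert the resulting sum over the $\GL_2(\Z/p^m\Z)$-orbit of $f$ into a Haar integral over $\GL_2(\Z_p)$ which, after the change of variables $g\mapsto (1,0)\cdot g$, becomes the integral over $(\Z_p^2)^{\rm Prim}\cong(R/\Z_p)^{\rm Prim}$. Two small clarifications on points you left loose: the factor $(1-p^{-1/3})$ emerges directly from the Dirichlet-series ratio in the slicing formula (\ref{mu2form}) and the domain $(R/\Z_p)^{\rm Prim}$ is automatic because $\omega$ is part of a $\Z_p$-basis of $R$, so these are two separate ingredients rather than the ``equivalently''-linked inclusion-exclusion you describe; and the step that produces $\Disc_p(R)$ in the denominator --- which is exactly the Jacobian you flagged as the main obstacle --- is the identity $|\Aut_{\GL_2(\Z/p^m\Z)}(f)|=|\Aut_{\GL_2(\Z_p)}(f)|\cdot\Disc_p(f)$, proved by computing the measure of the orbit $\GL_2(\Z_p)\cdot f$ in two ways.
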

\begin{proof}
  Fix a form $f\in V_{\Z_p}$ corresponding to $R$.
  Let $m$ be a positive integer such that $p^m$ is larger than
  $\Disc_p(R)$, so that in particular $\Disc(f)\not\equiv 0$ (mod
  $p^m$).  Let ${F}=\{f_1,f_2,\dots,f_r\}$ be the
  $\GL_2(\Z/p^m\Z)$-orbit of the reduction of $f$ (mod $p^m$).
  By the slicing techniques of Section~6, as used in the proof of
Theorem~\ref{shincong}, we have
$$\mu_2(R,p)=p^{-3m}\cdot
\displaystyle \Biggl.{\frac{\displaystyle\sum_{i=1}^r
\displaystyle\sum_{a\equiv a(f_i)} a^{-s}}{\displaystyle{\sum_{a\neq
    0}a^{-s}}}}
\Biggr|_{s=1/3},$$
where $a(f_i)$ denotes the $x^3$-coefficient of $f_i$ and the
congruences are taken modulo $p^m$. Since ${F}$ is
$\GL_2(\Z/p^m\Z)$-invariant, every value of $a(f_i)$ with the same
$p$-adic valuation occurs equally often in ${F}$. Therefore,
we have
\begin{equation}\label{mu2form}
\mu_2(R,p)=(1-p^{-1/3})p^{-3m}\displaystyle\sum_{i=1}^r \begin{cases}
  \displaystyle\frac{p^{1-m}|a(f_i)|_p^{-2/3}}{p-1} & \mbox{if } a(f_i)\not\equiv 0 \!\!\!\!\pmod{p^m}\vspace{.1in}\\
  \displaystyle\frac{p^{-m/3}}{1-p^{-1/3}} & \mbox{if } a(f_i)\equiv 0 \!\!\!\!\pmod{p^m}. \\
\end{cases}
\end{equation}

The group
$\GL_2(\Z_p)$ acts on $f$ in the natural way. Normalizing the Haar
measure so as to give $\GL_2(\Z_p)$ measure 1, we may rewrite
(\ref{mu2form}) as
$$\mu_2(R,p) = \frac{(1-p^{-2})(1-p^{-1/3})}
{|\Aut_{\GL_2(\Z/p^m\Z)}(f)|}\cdot\int_{\GL_2(\Z_p)}|a(g\cdot
f)|_p^{-2/3}dg.$$ 
The above equality holds since we are in the first case of (\ref{mu2form}) when $m$ is sufficiently large, and
$$
r=\#F=\frac{|\GL_2(\Z/p^m\Z)|}{|\Aut_{\GL_2(\Z/p^m\Z)}(f)|}=\frac{p^{4m}(1-p^{-2})(1-p^{-1})}{|\Aut_{\GL_2(\Z/p^m\Z)}(f)|}.
$$
Now, by computing the measure of $\GL_2(\Z_p)\cdot
f$ using two different methods, we obtain $$|\Aut_{\GL_2(\Z/p^m\Z)}(f)| =
|\Aut_{\GL_2(\Z_p)}(f)|\cdot \Disc_p(f).$$ The first method is by
splitting $\GL_2(\Z_p)\cdot f$ into $p^m\cdot V_{\Z_p}$ cosets. The number
of such cosets is exactly $|\GL_2(\Z/p^m\Z)|\cdot
|\Aut_{\GL_2(\Z/p^m\Z)}(f)|^{-1}$.  The second method is by integrating
over the group, and using that the left invariant measure on
$V_{\Z_p}$ is $|\Disc(v)|^{-1}dv$ and the map $g\rightarrow g\cdot f$
is a $|\Aut_{\GL_2(\Z_p)}(f)|$-to-1 cover.

We thus have
$$\mu_2(R,p) = \frac{(1-p^{-2})(1-p^{-1/3})}{\Disc_p(f)\cdot|\Aut_{\GL_2(\Z_p)}(f)|}\cdot\int_{\GL_2(\Z_p)}|a(g\cdot f)|_p^{-2/3}dg.$$
Note that $a(g\cdot f)=f(v_0\cdot g)$ where
$v_0=(1,0)\in\Z_p\times\Z_p$.  Therefore, we have
$$\int_{\GL_2(\Z_p)}|a(g\cdot f)|_p^{-2/3}dg=\int_{(\Z_p^2)^{{\rm Prim}}}|f(v)|_p^{-2/3}dv,$$
where $dv$ is normalized to have measure $1$ on $(\Z_p^2)^{{\rm
    Prim}}$.

From the correspondence in Section 2, we see that the set
$(\Z_p^2)^{{\rm Prim}}$ corresponds to $(R/\Z_p)^{{\rm Prim}}$ and
that for $v\in(\Z_p^2)^{{\rm Prim}}$ corresponding to $x\in R$,
the value of $f(v)$ is equal to the index of $\Z[x]$ in $R$.
The lemma follows.
\end{proof}

Theorem \ref{gensigma1} now follows from Theorem~\ref{gensigma2}
and the above lemma. $\Box$



\subsection*{Acknowledgments}

We thank Mohammad Bardestani, Karim Belabas, Andrew Granville, Piper
Harris, Carl Pomerance, Peter Sarnak, Christopher Skinner, Frank
Thorne, Ila Varma, Melanie Wood, and the anonymous referees for
helpful comments on earlier versions of this manuscript. We are also
grateful to Boris Alexeev and Sucharit Sarkar for helping us compute
the precise values of the second main terms.




\end{document}